\newtheorem{theorem}{Theorem}[section]
\newtheorem{lemma}[theorem]{Lemma}
\newtheorem{proposition}[theorem]{Proposition}
\newtheorem{corollary}[theorem]{Corollary}
\theoremstyle{definition}
\newtheorem{definition}[theorem]{Definition}
\newtheorem{example}[theorem]{Example}
\theoremstyle{remark}
\newtheorem{remark}[theorem]{Remark}
\numberwithin{equation}{section}
\def\sC{\mathcal{C}}
\def\bP{\mathbb{P}}
\def\bR{{\mathbb R}}
\def\sE{{\mathscr E}}
\def\sF{{\mathscr F}}
\def\sA{{\mathscr A}}
\def\sG{{\mathscr G}}
\def\cF{\mathcal F}
\def\bH{\mathbf{H}}
\def\sL{\mathscr{L}}
\def\cD{\mathcal{D}}
\def\bH{\mathbf{H}}
\def\bD{\mathbf{D}}
\def\bE{\mathbb{E}}
\def\bN{\mathbb{N}}
\def\bE{\mathbb{E}}
\def\bQ{\mathbb{Q}}
\def\sB{\mathscr{B}}
\begin{document}

\title[On domination for (non-symmetric) Dirichlet forms]{On domination for (non-symmetric) Dirichlet forms}

\author{ Liping Li}
\address{Fudan University, Shanghai, China. }
\email{liliping@fudan.edu.cn}
\thanks{The first named author is a member of LMNS,  Fudan University.  He is partially supported by NSFC (No. 12371144). }

\author{Jiangang Ying}
\address{Fudan University, Shanghai, China. }
\email{jgying@fudan.edu.cn}


\subjclass[2020]{Primary 31C25,  60J46, 60J57, 60J50, 47D07.}

\date{\today}



\keywords{Domination of semigroups,  Ouhabaz's domination criterion,  Laplace operators,  Robin boundary conditions,  Non-local boundary conditions,  Dirichlet forms,  Markov processes, Killing transformation, Multiplicative functionals, Bivariate Revuz measures,  Silverstein extensions}

\begin{abstract}
The primary aim of this article is to investigate the domination relationship between two $L^2$-semigroups using probabilistic methods. According to Ouhabaz's domination criterion, the domination of semigroups can be transformed into relationships involving the corresponding Dirichlet forms. Our principal result establishes the equivalence between the domination of Dirichlet forms and the killing transformation of the associated Markov processes, which generalizes and completes the results in \cite{Y962} and \cite{Y96}. Based on this equivalence, we provide a representation of the dominated Dirichlet form using the bivariate Revuz measure associated with the killing transformation and further characterize the sandwiched Dirichlet form within the broader Dirichlet form framework. In particular, our findings apply to the characterization of operators sandwiched between the Dirichlet Laplacian and the Neumann Laplacian. For the local boundary case, we eliminate all technical conditions identified in the literature \cite{AW03} and deliver a complete representation of all sandwiched operators governed by a Robin boundary condition determined by a specific quasi-admissible measure. Additionally, our results offer a comprehensive characterization of related operators in the non-local Robin boundary case, specifically resolving an open problem posed in the literature \cite{OA21}.
\end{abstract}

\maketitle

\tableofcontents

\section{Introduction}\label{SEC1}

Consider an open set $\Omega \subset \bR^n$. Let $\Delta^D$ and $\Delta^N$ denote the Laplace operators on $L^2(\Omega)$ subject to the Dirichlet boundary condition $u|_{\partial \Omega} = 0$ and the Neumann boundary condition $\frac{\partial u}{\partial n}|_{\partial \Omega} = 0$, respectively. Here, $L^2(\Omega)$ denotes the space of all square-integrable functions on $\Omega$,  $\partial \Omega$ represents the boundary of $\Omega$, and $\frac{\partial u}{\partial n}$ denotes the exterior normal derivative on the boundary (whose existence requires certain regularity assumptions). Denote by $(e^{t\Delta^D})_{t \geq 0}$ and $(e^{t\Delta^N})_{t \geq 0}$ the strongly continuous semigroups generated by $\Delta^D$ and $\Delta^N$, respectively.
In \cite{AW03}, Arendt and Warma investigated the question of which self-adjoint operator generates a semigroup $(T_t)_{t \geq 0}$ that is ``sandwiched" between $(e^{t\Delta^D})_{t \geq 0}$ and $(e^{t\Delta^N})_{t \geq 0}$ in the sense that
\begin{equation}\label{eq:11}
	e^{t\Delta^D}f\leq T_t  f\leq e^{t\Delta^N}f,\quad t\geq 0,f\in \mathrm{p}L^2(\Omega),
\end{equation}
where $\mathrm{p}L^2(\Omega)$ represents the family of all positive functions in $L^2(\Omega)$.
Their main result demonstrates that, under certain necessary conditions, $(T_t)_{t \geq 0}$ corresponds to the Laplace operator subject to a Robin boundary condition determined by a certain \emph{admissible} measure $\mu$ on $\partial \Omega$ (see Example~\ref{EXA75}). This operator and its semigroup are denoted by $\Delta_\mu$ and $e^{t\Delta_\mu}$, respectively. Specifically, when $\mu$ is absolutely continuous with respect to the surface measure on $\partial \Omega$ with density $\beta$, $\Delta_\mu$ corresponds to the classical Robin boundary condition:
\[
\beta u + \frac{\partial u}{\partial n} = 0\quad \text{on }\partial \Omega.
\]

Arendt and Warma's research builds upon Ouhabaz's work \cite{O96}, which provides a quadratic form characterization of the domination relation in \eqref{eq:11}. {(Indeed, similar characterizations have been previously discussed in the literature, including Silverstein \cite[\S21]{S74}.)} Specifically, if $(a^1, \cD(a^1))$ and $(a^2, \cD(a^2))$ are the closed forms associated with the strongly continuous semigroups $(T^1_t)_{t \geq 0}$ and $(T^2_t)_{t \geq 0}$, and both semigroups are positive (i.e., for any non-negative $f \in L^2(\Omega)$, $T^1_tf \geq 0$ and $T^2_tf \geq 0$), then
\begin{equation}\label{eq:13}
T^1_tf \leq T^2_tf,\quad \forall f\in \mathrm{p}L^2(\Omega)
\end{equation}
is equivalent to the conditions that $\cD(a^1)$ is an ideal of $\cD(a^2)$ and that
\[
a^1(u, v) \geq a^2(u, v), \quad u, v \in \mathrm{p}\cD(a^1),
\]
where $\mathrm{p}\cD(a^1)$ denotes the set of non-negative elements in $\cD(a^1)$ (see Lemma~\ref{LM21} for details).

In the context of \eqref{eq:11}, since the semigroups on both sides are positive and satisfy the (sub-)Markovian property (i.e., for any $f \in L^2(\Omega)$ with $0 \leq f \leq 1$, it holds that $0 \leq e^{t\Delta^D}f, e^{t\Delta^N}f \leq 1$), it follows that the sandwiched semigroup is also positive and Markovian. According to the foundational work of Beurling and Deny \cite{BD59}, semigroups with these properties correspond to closed forms known as \emph{Dirichlet forms}.

Over the past two decades, the study of sandwiched semigroups and quadratic forms has been advanced and applied from various perspectives. To illustrate, we highlight a selection of contributions. While much of the research continues to focus on Laplace operators (or elliptic differential operators), the scope of boundary conditions has expanded to include non-local settings, as explored in \cite{K18, OA21, VS12}. Additionally, the focus has broadened to encompass general Dirichlet forms, with notable works including \cite{ACD24, KL23, MVV05, R16, S20}. Recent years have also seen growing interest in extending these results to nonlinear semigroups and nonlinear Dirichlet forms, as exemplified by \cite{CC24}. Beyond these theoretical advances, related research has found applications in areas such as partial differential equations, fractal structures, and other fields, as demonstrated in \cite{BC17, GM24, SW10}.

It is important to note that, under conditions of \emph{regularity} or \emph{quasi-regularity}, the relationship between Dirichlet forms and Markov processes in probability theory is well established (see \cite{CF12, FOT11, MR92}). However, to our knowledge, aside from the mention of the probabilistic counterparts of Dirichlet forms in \cite{A12}, the aforementioned studies utilizing Ouhabaz's domination criterion have primarily adopted an analytical perspective, largely overlooking the substantial potential of probabilistic methods, despite most discussions occurring within the framework of regularity. Researchers familiar with Markov processes can readily relate the domination relationship of $L^2$-semigroups from \eqref{eq:13} to the \emph{subordination} between two Markov transition semigroups: $(Q_t)_{t\geq 0}$ is said to be \emph{subordinated to} $(P_t)_{t\geq 0}$ if
\begin{equation}\label{eq:12}Q_t f\leq P_t f,\quad \forall f\in \mathrm{p}\mathscr{B}.\end{equation}
This represents a classic topic in the theory of general Markov processes (see \cite[III]{BG68} and \cite[VII]{S88}). In simpler terms, the subordination stated in \eqref{eq:12} corresponds to the killing transformation of Markov processes. The Markov process corresponding to $(Q_t)_{t\geq 0}$, often referred to as the \emph{subprocess}, is always derived from the Markov process corresponding to $(P_t)_{t\geq 0}$ through a mechanism governed by some \emph{multiplicative functional} that terminates sample paths. The Dirichlet form characterization of general killing transformations has also been addressed in prior studies (such as \cite{Y96, Y962} {and \cite[\S21]{S74}}).  Nevertheless, it is crucial to acknowledge that there exists a significant disparity between domination in the sense of almost everywhere from \eqref{eq:13} and subordination in the pointwise sense from \eqref{eq:12}. Fortunately, the theory of regular Dirichlet forms established by Fukushima, along with the quasi-regular Dirichlet form framework developed by Ma et al. (see \cite{MR92}), is specifically designed to bridge this gap. Therefore, at least theoretically, there exists a probabilistic pathway to investigate domination in \eqref{eq:13}.

The primary objective of this paper is precisely to establish the equivalence between the domination \eqref{eq:13} of $L^2$-semigroups in analysis and the killing transformation of Markov processes in probability. Our principal result demonstrates the following intuitive fact: each dominated Dirichlet form is derived from two successive killing transformations. The first step involves terminating the Markov process when it exits a specific finely open set, which defines the actual state space of the subprocess corresponding to the dominated Dirichlet form. The second step consists of an additional killing transformation dictated by another multiplicative functional within the finely open set, which does not further alter the state space of the subprocess.  At the same time, these two steps of killing transformations correspond to the analytical decomposition of domination described in Theorem~\ref{THM24}: Every domination can be expressed as a composition of a Silverstein extension and strong subordination.

Extending beyond Ouhabaz’s domination criterion, our probabilistic characterization offers a representation of the dominated Dirichlet form using the bivariate Revuz measure of the multiplicative functional (which can be naturally extended to represent the sandwiched Dirichlet form, see Theorem~\ref{THM62}).  Specifically, for any (non-symmetric) Dirichlet forms $(a^1,\cD(a^1))$ and $(a^2,\cD(a^2))$ satisfying \eqref{eq:13}, there exists a positive measure $\sigma$ on $\overline{\Omega} \times \overline{\Omega}$, which can be expressed as detailed in \eqref{eq:46}, such that
\begin{equation}\label{eq:14}
\begin{aligned}
&\cD(a^1)=\cD(a^2)\cap L^2(\overline{\Omega},\bar{\sigma}),\\
& a^1(u,v)=a^2(u,v)+\sigma(u\otimes v),\quad u,v\in \cD(a^1),
\end{aligned}
\end{equation}
where $\bar{\sigma}$ denotes the marginal measure of $\sigma$. It is noteworthy that \cite{ACD24} has achieved a similar representation for regular and symmetric Dirichlet forms using analytical methods.

Our investigation is situated within the general framework of non-symmetric Dirichlet forms, with the essential condition being the quasi-regularity of the dominated Dirichlet form (see \S\ref{SEC5}). In other words, in the representation provided in \eqref{eq:14}, the quasi-regularity of $(a_1,\cD(a_1))$ is necessary, whereas $(a_2,\cD(a_2))$ does not need to satisfy the quasi-regularity condition. Consequently, the findings presented in this paper are applicable to the majority of situations discussed in the existing literature, including works such as \cite{A12, ACD24, KL23, MVV05, R16, S20}, among others. In particular, we re-examine the Laplacian-related sandwiched Dirichlet forms in \S\ref{SEC7}. Theorem~\ref{THM73} eliminates all assumptions imposed by Arendt and Warma \cite{AW03} concerning the characterization of sandwiched semigroups in \eqref{eq:11}: In fact, every semigroup $(T_t)_{t\geq 0}$ satisfying \eqref{eq:11} is determined by some \emph{quasi-admissible} measure $\mu$ associated with the Robin Laplacian. (The definition of quasi-admissibility can be found in Definition~7.1.) Moreover, comprehensive solutions can also be provided for Laplacian-related problems in the non-local Robin boundary case. Notably, our results address an open problem posed in the literature \cite{OA21}, as discussed in \S\ref{SEC73}.

Finally, we provide a brief description of commonly used symbols. The concepts and notations associated with Dirichlet forms align with those presented in \cite{CF12, FOT11, MR92}, among others.  For a function $u$ in a Dirichlet space,  $\tilde u$ denotes its quasi-continuous modification by default.  Given a topological space $E$,  $\mathscr{B}(E)$ represents the collection of Borel measurable sets or Borel measurable functions,  and $\mathscr B^*(E)$ denotes the collection of all universally measurable sets or universally measurable functions. For a class of functions $\mathscr{C}$,  $\mathrm{p} \mathscr{C}$ and $\mathrm{b} \mathscr{C}$ denote the families of non-negative elements and bounded elements, respectively, within $\mathscr{C}$.

\section{Decomposition of domination}

Let $E$ be a measurable space, and let $m$ be a $\sigma$-finite measure on $E$.  The norms of $L^2(E,m)$ and $L^\infty(E,m)$ are denoted by $\|\cdot\|_2$ and $\|\cdot\|_\infty$,  respectively, while the inner product of $L^2(E,m)$ is denoted by $(\cdot,\cdot)_m$.   A (non-symmetric) Dirichlet form $(\sE,\sF)$ on $L^2(E,m)$ is defined as a coercive closed  form that satisfies the Markovian property,  as described in,  e.g.,  \cite[I,  Definition~4.5]{MR92}.   The $\sE_1$-norm on $\sF$ is denoted by $\|\cdot\|_{\sE_1}$,  where $\|f\|_{\sE_1}:=\left(\sE(f,f)+(f,f)_m\right)^{1/2}$ for all $f\in \sF$.  The Dirichlet form is called symmetric if, in addition,  $\sE(f,g)=\sE(g,f)$ for all $f,g\in \sF$.  

For a given Dirichlet form $(\sE,\sF)$ on $L^2(E,m)$,  it is well known that $\mathrm{b}\sF:=\sF\cap L^\infty(E,m)$ is a subalgebra of $L^\infty(E,m)$ (see \cite[I, Corollary~4.15]{MR92}),  and $\sF$ forms a sublattice of $L^2(E,m)$ (see \cite[I, Proposition~4.11]{MR92}).  

If $U,V$ are subalgebras of $L^\infty(E,m)$,  we say that $U$ is an \emph{algebraic ideal} in $V$ if $f\in U$ and $g\in V$ imply $fg\in U$.  For sublattices $U, V$ of $L^2(E,m)$,  $U$ is called an \emph{order ideal} in $V$ if $f\in U,g\in V$, and $|g|\leq |f|$ imply $g\in U$.

For two Dirichlet forms $(\sE,\sF)$ and $(\sE',\sF')$,  we say that $\sE$ \emph{dominates} $\sE'$ (or equivalently,  $\sE'$ is \emph{dominated by} $\sE$) and write $(\sE',\sF')\preceq (\sE,\sF)$ or $\sE' \preceq \sE$ if either of the conditions in the following lemma is satisfied.
The second condition in this lemma is commonly referred to as the \emph{Ouhabaz domination criterion} (see \cite{O96}) for the domination of semigroups.


\begin{lemma}\label{LM21}
Let $(\sE,\sF)$ and $(\sE',\sF')$ be two Dirichlet forms on $L^2(E,m)$,  whose associated semigroups (as described in \cite[I, Remark~2.9(ii)]{MR92}) are denoted by $(T_t)_{t\geq 0}$ and $(T'_t)_{t\geq 0}$,  respectively.  The following assertions are equivalent:
\begin{itemize}
\item[(1)] $T'_t f\leq T_tf$ for all $t\geq 0$ and all non-negative $f\in L^2(E,m)$. 
\item[(2)] $\sF'\subset \sF$,  $\mathrm{b}\sF':=\sF'\cap L^\infty(E,m)$ is an algebraic ideal in $\mathrm{b}\sF$,  and 
\begin{equation}\label{eq:21}
	\sE'(f,g)\geq \sE(f,g)
\end{equation}
for all non-negative $f,g\in \sF'$.  
\item[(3)] $\sF'\subset \sF$,  $\sF'$ is an order ideal in $\sF$,  and \eqref{eq:21} holds for all non-negative $f,g\in \sF'$.
\end{itemize}
\end{lemma}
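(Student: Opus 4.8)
The plan is to prove the cycle of implications $(1)\Rightarrow(3)\Rightarrow(2)\Rightarrow(1)$, or perhaps the more symmetric route $(2)\Leftrightarrow(3)$ together with $(1)\Leftrightarrow(2)$; I expect the passage between the two ``ideal'' formulations to be the easier bookkeeping and the equivalence with the semigroup domination to carry the analytic weight. For $(2)\Leftrightarrow(3)$, the point is that an algebraic ideal in $\mathrm{b}\sF$ together with $\sF'\subset\sF$ can be upgraded to an order ideal in $\sF$, and conversely. First I would record the elementary fact that for a Dirichlet form, $\mathrm{b}\sF$ is $\|\cdot\|_{\sE_1}$-dense in $\sF$ and that normal contractions (in particular $f\mapsto (f\wedge n)\vee(-n)$ and $f\mapsto |f|$) operate on $\sF$; this lets one reduce statements about $\sF$ to statements about $\mathrm{b}\sF$. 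Given $f\in\sF'$, $g\in\sF$ with $|g|\le|f|$, truncating both and using that $\mathrm{b}\sF'$ is an algebraic ideal (so $\mathrm{b}\sF'$ is stable under multiplication by bounded elements of $\sF$, and in particular by $\tfrac{g_n}{1\vee|f_n|}\in\mathrm{b}\sF$ up to the usual regularization) produces $g_n\in\sF'$, and one closes up in the $\sE_1$-norm using a contraction argument to get $g\in\sF'$; this is the direction $(2)\Rightarrow(3)$. The reverse $(3)\Rightarrow(2)$ is immediate once one checks an algebra identity: for $f\in\mathrm{b}\sF'$ and $g\in\mathrm{b}\sF$, $|fg|\le\|g\|_\infty|f|$, so $fg\in\sF'$ by the order-ideal property, and $fg\in L^\infty$, hence $fg\in\mathrm{b}\sF'$.

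For $(1)\Leftrightarrow(2)$ I would invoke Ouhabaz's theorem essentially verbatim, but it needs a small amount of care because we are in the non-symmetric setting. The strategy is to pass to the associated (non-symmetric) Dirichlet forms and use the standard fact that domination of the semigroups $T'_t\le T_t$ on $\mathrm{p}L^2$ is equivalent to domination of the resolvents $G'_\alpha\le G_\alpha$ for all (equivalently, one) $\alpha>0$, via the Laplace transform representation $G_\alpha f=\int_0^\infty e^{-\alpha t}T_tf\,dt$ and the inversion/approximation $T_tf=\lim_{n}(\tfrac{n}{t}G_{n/t})^n f$. One then characterizes resolvent domination through the variational (obstacle-type) description of $G'_\alpha f$ and $G_\alpha f$ as minimizers, or rather through the projection/complementation argument that Ouhabaz uses: $G'_\alpha f$ is obtained from $G_\alpha f$ by an operation that only decreases it precisely when $\sF'$ is an ideal and $\sE'\ge\sE$ on the positive cone. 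I would cite \cite{O96} (and note that the non-symmetric extension is routine, or appeal to \cite{MR92} for the needed form-theoretic facts such as the correspondence between forms, resolvents and semigroups in \cite[I, Remark~2.9]{MR92}).

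The main obstacle I anticipate is the direction $(2)\Rightarrow(1)$ (equivalently, producing the semigroup domination from the form inequality plus the ideal property) in the \emph{non-symmetric} case, because Ouhabaz's original proof leans on self-adjointness at the point where one compares $\sE(u,v)$ with $\sE'(u,v)$ after a sublattice projection, and in the non-symmetric setting one must handle the form and its adjoint $\hat\sE(f,g):=\sE(g,f)$ simultaneously, checking that the sector condition is preserved under the relevant truncations and that the coercivity constants behave. Concretely, the step where one writes, for $0\le f\in L^2$ and $u:=G'_\alpha f$, $v:=G_\alpha f$, the comparison
\[
\sE'_\alpha(u,u)-\sE_\alpha(u,v) = (f,u)_m-(f,v)_m
\]
and then wants to conclude $u\le v$ by testing against $(u-v)^+$ — this requires that $(u-v)^+\in\sF'$ (the ideal property, applied with $|(u-v)^+|\le|u|$ up to truncation and an approximation, plus $v\in\sF$) and that $\sE'((u-v)^+,(u-v)^+)\ge\sE((u-v)^+,(u-v)^+)$ together with the cross terms having the right sign; in the non-symmetric case the cross terms $\sE((u-v)^+,(u-v)^-)$ need the standard ``$\sE(w^+,w^-)\le 0$'' inequality for coercive forms, which holds but should be cited from \cite[I, Proposition~4.11]{MR92} or proved via the Markovian property. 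Once these sign conditions are assembled the contradiction $\|(u-v)^+\|_\cdot=0$ follows, giving resolvent domination and hence $(1)$. I would therefore structure the write-up so that this non-symmetric obstacle-comparison lemma is isolated and proved first, with everything else reduced to it.
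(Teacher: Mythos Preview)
Your overall architecture is sound, and the direction $(3)\Rightarrow(2)$ is handled exactly as in the paper. Two points where you diverge from, or fall short of, the paper's proof are worth noting.

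First, for the equivalence with semigroup domination the paper does not redo Ouhabaz's argument at all: it simply cites \cite[Corollary~4.3]{MVV05}, which already establishes $(1)\Leftrightarrow(3)$ in the non-symmetric (sectorial) setting. Your worry about adapting the obstacle/projection argument to the sector condition is therefore unnecessary here; the work has been done in the literature, and the paper treats this equivalence as a black box. Your sketched direct proof would also need, at the step ``$(u-v)^+\in\sF'$'', the \emph{order} ideal property, which you only have after first proving $(2)\Rightarrow(3)$---so the route $(2)\Rightarrow(1)$ you describe is really $(2)\Rightarrow(3)\Rightarrow(1)$ in disguise.

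Second, and more substantively, your sketch of $(2)\Rightarrow(3)$ has a gap. Producing approximants $h_\varepsilon\in\mathrm b\sF'$ with $h_\varepsilon\to f$ in $L^2$ is fine (the paper uses $h_\varepsilon:=f\cdot \tfrac{g}{g+\varepsilon}$ after reducing to $0\le f\le g$ bounded; your $\tfrac{g_n}{1\vee|f_n|}$ is not obviously in $\mathrm b\sF$ since $x\mapsto 1/(1\vee|x|)$ does not vanish at $0$). But the ``contraction argument'' you invoke only bounds $\sE(h_\varepsilon,h_\varepsilon)$, not $\sE'(h_\varepsilon,h_\varepsilon)$, and it is a uniform bound on the latter that Banach--Saks needs. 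The paper closes this by \emph{using the hypothesis} $\sE'(u,v)\ge\sE(u,v)$ on non-negative $u,v\in\sF'$: since $0\le h_\varepsilon\le g$ one expands $\sE'(g,g)=\sE'(h_\varepsilon+(g-h_\varepsilon),\,h_\varepsilon+(g-h_\varepsilon))$ and bounds each of the three terms involving $g-h_\varepsilon\ge 0$ below by the corresponding $\sE$-term, obtaining
\[
\sE'(h_\varepsilon,h_\varepsilon)\le \sE'(g,g)-\sE(g,g)+\sE(h_\varepsilon,h_\varepsilon),
\]
which is uniformly bounded via the contraction estimate on $\sE(h_\varepsilon,h_\varepsilon)$. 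This use of the form inequality is the missing idea in your sketch.
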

\begin{proof}
The equivalence between the first and third assertions was established in \cite[Corollary~4.3]{MVV05}.  The proof of (3)$\Rightarrow$(2) is straightforward.  It suffices to demonstrate that, under the assumptions of the third assertion,  if $f\in \mathrm{b}\sF'$ and $g\in \mathrm{b}\sF$,  then $fg\in \mathrm{b}\sF'$.  This follows immediately, as $|fg|\leq \|g\|_\infty |f|$ and $\sF'$ is an order ideal in $\sF$.  

The proof of (2)$\Rightarrow$(3) can be completed by repeating the argument in \cite[Lemma~2.2]{S20},  where only the symmetric case is considered.  For the convenience of the readers,  we restate the necessary details.  Let $f\in \sF$ and $g\in \sF'$ with $|f|\leq |g|$.  Our goal is to show that $f\in \sF'$.  {Since $\sF$ and $\sF'$ are lattices,  we first assume that $0\leq f\leq g$ and $f$ and $g$ are bounded.}  Let $A:=\{(x,y\in \mathbb{R}^2:0\leq x\leq y\}$ and $\varepsilon>0$.  Consider the function
\[
	C_\varepsilon:A\rightarrow \mathbb{R},\quad C_\varepsilon(x,y):=\frac{xy}{y+\varepsilon}.
\]
It satisfies the following inequality
\begin{equation}\label{eq:22}
	\left|C_\varepsilon(x_1,y_1)-C_\varepsilon(x_2,y_2)\right|\leq |x_1-x_2|+|y_1-y_2|
\end{equation}
for $(x_1,y_1),(x_2,y_2)\in A$.   The function $H_\varepsilon(y):=\frac{y}{y+\varepsilon}$ is Lipschitz continuous with $\|H'_\varepsilon\|_\infty<\frac{1}{\varepsilon}$ and $H_\varepsilon(0)=0$.   By \cite[I, Proposition~4.11]{MR92},  we have $H_\varepsilon(g)\in \mathrm{b}\sF'$.  Therefore,  by the second assertion,  $h_\varepsilon:=C_\varepsilon(f,g)=f\cdot H_\varepsilon(g)\in \mathrm{b}\sF'$.  It is evident that $h_\varepsilon\rightarrow f$ in $L^2(E,m)$ as $\varepsilon\downarrow 0$.  We now aim to show that
\[
	\sup_{\varepsilon>0}\sE'(h_\varepsilon,h_\varepsilon)<\infty,
\]
which, by the Banach-Saks theorem, will imply that $f\in \sF'$.  Indeed,  it follows from $0\leq h_\varepsilon\leq f\leq g$ and \eqref{eq:21} that
\[
\begin{aligned}
	\sE'(g,g)&=\sE'\left(h_\varepsilon+(g-h_\varepsilon),h_\varepsilon+(g-h_\varepsilon)\right) \\
	&\geq \sE'(h_\varepsilon,h_\varepsilon)+\sE(g,g)-\sE(h_\varepsilon,h_\varepsilon).
\end{aligned}\]
Using the property \eqref{eq:22} and applying \cite[I, Proposition~4.11]{MR92} to $(\sE,\sF)$,  we get
\[
	\sE(h_\varepsilon,h_\varepsilon)^{1/2}\leq \sE(f,f)^{1/2}+\sE(g,g)^{1/2}.
\]
Thus, we have
\[
\begin{aligned}
	\sup_{\varepsilon>0}\sE'(h_\varepsilon, h_\varepsilon)&\leq \sup_{\varepsilon>0}\sE(h_\varepsilon,h_\varepsilon)+\sE'(g,g)-\sE(g,g)  \\
	&\leq \left(\sE(f,f)^{1/2}+\sE(g,g)^{1/2} \right)^2+\sE'(g,g)-\sE(g,g)<\infty.
\end{aligned}\]
{Next we prove the assertion for the case that $0\leq f\leq g$.  By \cite[I, Proposition~4.17]{MR92}, we have $f_n:=f\wedge n\in \sF'$ for all $n\in \bN$.  Take $m\in \bN$ with $m>n$. Then we have $0\leq f_n\leq f_m\leq g_m:=g\wedge m$. By replacing $h_\varepsilon,f,g$ above with $f_n,f_m,g_m$, respectively, we can deduce that
\[
	\sE'(f_n,f_n)\leq \sE'(g_m,g_m)-\sE(g_m,g_m)+\sE(f_n,f_n)\leq \sE'(g,g)+\sE(f,f),
\]
which implies $f\in \sF'$. The proof for $f\in \sF'$ under the general condition $f\in \sF, g\in \sF'$ with $|f|\leq |g|$ can be obtained from this. 
}
This completes the proof.
\end{proof}

We introduce two additional notions.

\begin{definition}
Let $(\sE,\sF)$ and $(\sE',\sF')$ be two Dirichlet forms on $L^2(E,m)$. 
\begin{itemize}
\item[(1)] We say that $\sE'$ is \emph{subordinate to} $\sE$ if $\sF'\subset \sF$ and \eqref{eq:21} holds for all non-negative $f,g\in \sF'$.  Furthermore, $\sE'$ is \emph{strongly subordinate to} $\sE$ if, in addition,  $\sF'$ is dense in $\sF$ with respect to the $\sE_1$-norm.
\item[(2)] We say that $\sE$ is a \emph{Silverstein extension} of $\sE'$ if $\mathrm{b}\sF'$ is an algebraic ideal in $\mathrm{b}\sF$ and $\sE=\sE'$ on $\mathrm{b}\sF'\times \mathrm{b}\sF'$.  
\end{itemize}
\end{definition}
\begin{remark}
The notions of (strong) subordination were discussed in \cite{Y96}, where the author, under the additional assumption of quasi-regularity, examined the relationship between the (strong) subordination of Dirichlet forms and the killing transformation of the associated Markov processes. The second concept, introduced by Silverstein (see \cite{S74, S76}), is standard in the theory of (symmetric) Dirichlet forms. For more detailed information, readers are referred to \cite[\S6.6]{CF12}. However, it is important to emphasize that our discussion is not restricted to symmetric Dirichlet forms.
\end{remark}

Clearly,  $\sE'\preceq \sE$ if and only if $\sE'$ is subordinate to $\sE$ and $\mathrm{b}\sF'$ is an algebraic ideal in $\mathrm{b}\sF$ (or equivalently,  $\sF'$ is an order ideal in $\sF$).  By the proof of Lemma~\ref{LM21},  $\sE$ is a Silverstein extension of $\sE'$ if and only if $\sF'$ is an order ideal in $\sF$ and $\sE=\sE'$ on $\mathrm{b}\sF'\times \mathrm{b}\sF'$.  In particular,  if $\sE$ is a Silverstein extension of $\sE'$,  then 
$\sE'\preceq \sE$ and $\sE'$ is subordinate to $\sE$.  

We now present a simple yet intriguing result, which demonstrates that domination can be uniquely decomposed into a combination of strong subordination and Silverstein extension. {This result in the symmetric case has been established in \cite[Theorem~21.2]{S74}, and it is referred to as the Third Structure Theorem.}

\begin{theorem}\label{THM24}
Let $(\sE,\sF)$ and $(\sE',\sF')$ be two Dirichlet forms on $L^2(E,m)$ such that $\sE'\preceq \sE$.  Denote $\tilde{\sF}$ as the closure of $\sF'$ in $\sF$ with respect to the $\sE_1$-norm, and define
\[
	\tilde{\sE}(f,g):=\sE(f,g),\quad f,g\in \tilde{\sF}.
\]
Then, $\sE'$ is strongly subordinate to $\tilde{\sE}$, and $\sE$ is a Silverstein extension of $\tilde{\sE}$.  Furthermore, if $(\tilde\sE^1, \tilde\sF^1)$ is another Dirichlet form on $L^2(E, m)$ that satisfies these two conditions, then $(\tilde\sE^1,\tilde\sF^1)=(\tilde{\sE},\tilde{\sF})$.  
\end{theorem}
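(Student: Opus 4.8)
The plan is to verify the three claimed properties of $(\tilde\sE,\tilde\sF)$ in turn, and then prove uniqueness by a squeeze argument. First I would check that $(\tilde\sE,\tilde\sF)$ is genuinely a Dirichlet form: $\tilde\sF$ is by construction a closed subspace of $(\sF,\|\cdot\|_{\sE_1})$, hence $(\tilde\sE,\tilde\sF)$ is a coercive closed form (the sector condition is inherited from $\sE$), and it is Markovian because the unit contraction operates on $\sF'$ (since $(\sE',\sF')$ is Markovian and $\sE=\sE'$ up to the relevant closure) and this property passes to the $\sE_1$-closure. Here I would lean on the fact that $\sF' \subset \tilde\sF \subset \sF$ and that the normal contractions operate compatibly; since $\sF'$ is dense in $\tilde\sF$ in $\sE_1$-norm and $\tilde\sE = \sE$ on $\tilde\sF$, the Markovian property descends from $\sF'$ by approximation.

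Next, strong subordination of $\sE'$ to $\tilde\sE$: we need $\sF' \subset \tilde\sF$ (immediate, $\sF'$ is a subset of its own closure), the inequality $\sE'(f,g) \geq \tilde\sE(f,g) = \sE(f,g)$ for non-negative $f,g \in \sF'$ (this is exactly \eqref{eq:21}, which holds because $\sE' \preceq \sE$), and $\sE_1$-density of $\sF'$ in $\tilde\sF$, which is true by the very definition of $\tilde\sF$ as the $\sE_1$-closure of $\sF'$. For the Silverstein extension claim, I must show $\mathrm{b}\tilde\sF$ is an algebraic ideal in $\mathrm{b}\sF$ and $\sE = \tilde\sE$ on $\mathrm{b}\tilde\sF \times \mathrm{b}\tilde\sF$. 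The latter is trivial since $\tilde\sE$ is the restriction of $\sE$. For the ideal property, recall from Lemma~\ref{LM21} and the discussion after it that $\sE' \preceq \sE$ implies $\sF'$ is an order ideal in $\sF$; I would argue that the $\sE_1$-closure $\tilde\sF$ of $\sF'$ remains an order ideal in $\sF$. Concretely, given $g \in \mathrm{b}\tilde\sF$ and $f \in \mathrm{b}\sF$, I want $fg \in \tilde\sF$: approximate $g$ in $\sE_1$-norm by $g_n \in \sF'$, truncate so the $g_n$ are uniformly bounded (using that normal contractions operate and the truncation is $\sE_1$-continuous), note $f g_n \in \sF'$ because $\sF'$ is an algebraic ideal in $\mathrm{b}\sF$, and show $f g_n \to fg$ in $\sE_1$-norm. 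The convergence in $L^2$ is clear; for the $\sE$-part I would invoke the product/Leibniz-type estimates available for Dirichlet forms (as used in the proof of Lemma~\ref{LM21}, e.g. via \cite[I, Corollary~4.15]{MR92}) together with uniform boundedness of the $g_n$ to get a uniform bound on $\sE(fg_n, fg_n)$, then pass to a Banach–Saks subsequence to conclude $fg \in \tilde\sF$.

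For uniqueness, suppose $(\tilde\sE^1,\tilde\sF^1)$ also satisfies both conditions. Strong subordination of $\sE'$ to $\tilde\sE^1$ gives $\sF' \subset \tilde\sF^1$ and that $\sF'$ is $\tilde\sE^1_1$-dense in $\tilde\sF^1$; moreover $\tilde\sE^1 = \sE'$ on $\sF'$? No — rather, subordination gives the inequality, but the key point is that strong subordination forces $\tilde\sF^1$ to be the $\tilde\sE^1_1$-closure of $\sF'$. Since $\sE$ is a Silverstein extension of $\tilde\sE^1$, we have $\tilde\sF^1 \subset \sF$ and $\tilde\sE^1 = \sE$ on $\mathrm{b}\tilde\sF^1 \times \mathrm{b}\tilde\sF^1$; I would first upgrade this to $\tilde\sE^1 = \sE$ on all of $\tilde\sF^1 \times \tilde\sF^1$ by a bounded-truncation approximation argument (truncations of $f \in \tilde\sF^1$ lie in $\mathrm{b}\tilde\sF^1$ and converge in $\tilde\sE^1_1$-norm, and also in $\sE_1$-norm since the two forms agree on bounded elements). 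Consequently the $\tilde\sE^1_1$-norm and the $\sE_1$-norm coincide on $\tilde\sF^1$, so $\tilde\sF^1$ is the $\sE_1$-closure of $\sF'$, which is exactly $\tilde\sF$; and then $\tilde\sE^1 = \sE = \tilde\sE$ on it. The main obstacle I anticipate is the ideal-property step in the Silverstein claim (and the parallel upgrade step in uniqueness): controlling $\sE(fg_n - fg, fg_n - fg)$ requires care with the non-symmetric bilinear form and the product estimates, and one must be attentive that the uniform-boundedness truncation of the approximating sequence stays $\sE_1$-convergent — this is where the bulk of the technical work lies, and it essentially recycles the Banach–Saks / Lipschitz-contraction machinery already displayed in the proof of Lemma~\ref{LM21}.
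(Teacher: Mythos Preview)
Your proposal is correct and follows essentially the same approach as the paper: the paper also proves the ideal property of $\mathrm{b}\tilde\sF$ in $\mathrm{b}\sF$ by approximating $f\in\mathrm{b}\tilde\sF$ with uniformly bounded $f_n\in\mathrm{b}\sF'$, using the product estimate \cite[Theorem~1.4.2]{FOT11} to get a uniform $\sE_1$-bound on $f_n g$, and then invoking Banach--Saks (so you do not need to prove $\sE_1$-convergence of $f_n g$ directly, only boundedness). Your treatment is in fact slightly more thorough than the paper's: you explicitly check that $(\tilde\sE,\tilde\sF)$ is a Dirichlet form (which the paper leaves implicit), and in the uniqueness step you spell out the bounded-truncation argument upgrading $\tilde\sE^1=\sE$ from $\mathrm{b}\tilde\sF^1$ to all of $\tilde\sF^1$, which the paper asserts in one line.
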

\begin{proof}
The first assertion that $\sE'$ is strongly subordinate to $\tilde{\sE}$ follows directly from the definition of $(\tilde{\sE},\tilde{\sF})$.  We will now demonstrate that $\sE$ is a Silverstein extension of $\tilde{\sE}$.  Consider $f\in \mathrm{b}\tilde{\sF}:=\tilde{\sF}\cap L^\infty(E,m)$ and $g\in \mathrm{b}\sF$.  Our objective is to establish that $fg\in \mathrm{b}\tilde{\sF}$.  Indeed,  we may take a sequence $f_n\in \mathrm{b}\sF'$ such that $\sE_1(f_n-f,f_n-f)\rightarrow 0$,  $\sup_{n\geq 1}\|f_n\|_\infty<\infty$, and $f_n$ converges to $f$,  $m$-a.e.  
Since $\mathrm{b}\sF'$ is an algebraic ideal in $\mathrm{b}\sF$,  it follows that $f_ng\in \mathrm{b}\sF'\subset \mathrm{b}\tilde{\sF}$.   Furthermore, 
according to \cite[Theorem~1.4.2]{FOT11},  we have
\[
	\sup_{n\geq 1}\tilde{\sE}_1(f_ng,f_ng)^{1/2}\leq \sup_{n\geq 1}\|f_n\|_\infty\cdot  \sqrt{\sE_1(g,g)}+\|g\|_\infty \sup_{n\geq 1}\sqrt{\sE_1(f_n,f_n)}<\infty.
\]
By the Banach-Saks theorem,  there exists a subsequence $\{f_{n_k}g\}$ of $\{f_ng\}$ and $h\in \tilde{\sF}$ such that 
\[
	\frac{1}{N}\sum_{k=1}^N f_{n_k}g\rightarrow h,\quad N\rightarrow \infty
\]
with respect to the $\tilde{\sE}_1$-norm.  Note that $f_n\rightarrow f$,  $m$-a.e.  Therefore,  $h=fg$,  $m$-a.e.,  confirming that $fg\in \mathrm{b}\tilde{\sF}$.

Let $(\tilde{\sE}^1,\tilde{\sF}^1)$ be another Dirichlet form that satisfies the two specified conditions.  Since $\sE$ is a Silverstein extension of $\tilde{\sE}^1$,  it follows that $\tilde{\sE}^1(f,g)=\sE(f,g)$ for all $f,g\in \tilde{\sF}^1$.  The strong subordination of $\sE$ to $\tilde{\sE}^1$ implies that $\sF'$ is dense in $\tilde{\sF}^1$ with respect to the $\sE_1$-norm.  Consequently,   we have $\tilde{\sF}=\tilde{\sF}^1$ and $\tilde{\sE}(f,g)=\sE(f,g)=\tilde{\sE}^1(f,g)$ for all $f,g\in \tilde{\sF}=\tilde{\sF}^1$.  This completes the proof.
\end{proof}

\section{Killing is domination}

The relationship between Dirichlet forms and Markov processes is commonly known as contingent upon the condition of \emph{quasi-regularity}. In Appendix~\ref{SEC3}, we present an overview of the content relevant to this condition and the properties of the corresponding Markov processes.


Let $E$ be a Lusin topological space {with $\sB(E)=\sigma(C(E))$} and $m$ a $\sigma$-finite measure on $E$ with support $\text{supp}[m]=E$. 
We consider a quasi-regular Dirichlet form $(\sE,\sF)$ on $L^2(E,m)$.  Let $X=\left(\Omega, \mathcal{F}, \mathcal{F}_t, X_t, \theta_t,\bP_x\right)$ with lifetime $\zeta$ be an $m$-tight Borel right process properly associated with $(\sE,\sF)$.  The cemetery is denoted by $\Delta$.  We may assume that $\Omega$ contains a distinguished point $[\Delta]$ such that $X_t([\Delta])=\Delta$ for all $t\geq 0$.  The goal of this section is to demonstrate that the killing transformation on $X$ induces the domination of $(\sE,\sF)$.

\subsection{Part process}\label{SEC42}


We first investigate a specific killing transformation that results in a part process defined on a finely open set.

Let $G\subset E$ be a finely open set,  which is nearly Borel measurable since $X$ is {a Borel right process}, and define $B:=E\setminus G$.  We denote $D_B:=\inf\{t\geq 0:X_t\in B\}$,  and for $\omega\in \Omega$,  we define
\[
	X^G_t(\omega):=\left\lbrace
	\begin{aligned}
		&X_t(\omega),\quad &0\leq t<D_B(\omega)\wedge \zeta(\omega), \\
		&\Delta,\quad &t\geq D_B(\omega) \wedge \zeta(\omega),
	\end{aligned}	
	\right. \qquad \zeta^G(\omega):=D_B(\omega)\wedge \zeta(\omega),
\]
and
\[
	\theta^G_t(\omega):=\left\lbrace\begin{aligned}
	&\theta_t(\omega),\quad &t<\zeta^G(\omega),\\
	&[\Delta],\quad &t\geq \zeta^G(\omega).
	\end{aligned}\right.
\]
According to \cite[(12.24)]{S88},  
\begin{equation}\label{eq:30}
X^G:=(\Omega, \cF,\cF_t, X^G_t, \theta^G_t, \bP_x)
\end{equation} is a right process on the Radon space $(G,\sB^*(G))$ with lifetime $\zeta^G$.  This process is commonly referred to as the \emph{part process} of $X$ on $G$,  and its Dirichlet form,  as expressed in \eqref{eq:31}, is called the \emph{part Dirichlet form} of $(\sE,\sF)$ on $G$.  


\begin{lemma}\label{LM42}
The right process $X^G$ is $m|_{G}$-tight and is properly associated with the quasi-regular Dirichlet form given by
\begin{equation}\label{eq:31}
	\begin{aligned}
		&\sF^G:=\{f\in \sF: \tilde{f}=0,\sE\text{-q.e. on }B\},\\
		&\sE^G(f,g):=\sE(f,g),\quad f,g\in \sF^G
	\end{aligned}
\end{equation}
on $L^2(G,m|_G)$,  where $\tilde{f}$ denotes the $\sE$-quasi-continuous $m$-version of $f$. 
\end{lemma}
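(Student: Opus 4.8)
The plan is to prove the two assertions of Lemma~\ref{LM42} — that $X^G$ is $m|_G$-tight and that it is properly associated with the Dirichlet form $(\sE^G,\sF^G)$ — by reducing the quasi-regular situation to the regular one via a transfer method, or alternatively by working directly with the probabilistic description of $X^G$ together with hitting-distribution estimates. Let me sketch the direct approach.

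First I would verify that $(\sE^G,\sF^G)$ is a well-defined Dirichlet form on $L^2(G,m|_G)$. Since $\sF^G$ is defined as the set of $f \in \sF$ whose quasi-continuous version vanishes $\sE$-q.e.\ on $B = E\setminus G$, it is a closed subspace of $(\sF,\|\cdot\|_{\sE_1})$ (the q.e.-vanishing condition is stable under $\sE_1$-convergence, passing to a subsequence converging q.e.), hence $(\sE^G,\sF^G)$ is a closed form; the Markovian property is inherited because the unit contraction preserves the condition $\tilde f = 0$ q.e.\ on $B$, and coercivity is inherited from $(\sE,\sF)$. Identifying $L^2(G,m|_G)$ with $\{f \in L^2(E,m): f = 0\ m\text{-a.e.\ on }B\}$, which is legitimate since $\text{supp}[m]=E$ forces $m(G)$-relevant structure, we get a genuine Dirichlet form on $L^2(G,m|_G)$.

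Next, the core step: identify the semigroup. Let $(P^G_t)$ be the transition semigroup of the part process $X^G$, i.e.\ $P^G_t f(x) = \bP_x[f(X_t); t < \zeta^G] = \bP_x[f(X_t); t < D_B \wedge \zeta]$. I would show $(P^G_t)$ is the $L^2(G,m|_G)$-semigroup associated with $(\sE^G,\sF^G)$. The standard route: $(P^G_t)$ is obtained from $(P_t)$ (the semigroup of $X$) by the multiplicative functional $M_t = \mathbf 1_{\{t < D_B\}}$, so it is sub-Markovian and $m|_G$-symmetric-type positivity-preserving; by Ouhabaz/Lemma~\ref{LM21}-type reasoning $(P^G_t)$ is dominated by $(P_t)$, but more precisely I want its exact form. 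The cleanest argument uses the resolvent: for the part process, $R^G_\alpha f(x) = \bE_x\!\left[\int_0^{D_B \wedge \zeta} e^{-\alpha t} f(X_t)\,dt\right]$, and there is the classical decomposition $R_\alpha f = R^G_\alpha f + \bE_x[e^{-\alpha D_B} R_\alpha f(X_{D_B}); D_B < \zeta]$ (strong Markov property at $D_B$). One then checks that $R^G_\alpha f \in \sF$ with $\widetilde{R^G_\alpha f} = 0$ q.e.\ on $B$ (because $X^G$ is killed upon hitting $B$, its $\alpha$-potentials vanish q.e.\ on $B$ — this uses that $D_B$ coincides q.e.\ with the hitting time of the fine closure and a quasi-continuity/regularity argument), hence $R^G_\alpha f \in \sF^G$; and for any $g \in \sF^G$ one verifies $\sE_\alpha(R^G_\alpha f, g) = (f,g)_m$ by writing $g = \lim \alpha R_\alpha g$ appropriately and using that $\widetilde g = 0$ q.e.\ on $B$ kills the boundary term in the decomposition. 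This gives $R^G_\alpha = (\alpha + \text{generator of }\sE^G)^{-1}$, hence $(P^G_t)$ is the semigroup of $(\sE^G,\sF^G)$, and proper association follows since $X^G$ has the right-process sample-path regularity and its resolvent maps into $\sE^G$-quasi-continuous functions (inherited from $X$ being properly associated with $(\sE,\sF)$, restricting the nest to $G$).

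Finally, $m|_G$-tightness and quasi-regularity of $(\sE^G,\sF^G)$: I would take an $\sE$-nest $(K_n)$ of compact sets for $(\sE,\sF)$ with $\bP_m(\lim_n \sigma_{E\setminus K_n} < \zeta) = 0$ (from $m$-tightness of $X$), and observe that $(K_n \cap \bar G^{\,\text{(fine)}})$, suitably adjusted to compacts in $G$, form an $\sE^G$-nest: on $G$, escaping every $K_n$ before $\zeta^G = D_B \wedge \zeta$ is controlled by the corresponding event for $X$ since $\zeta^G \le \zeta$ and sample paths of $X^G$ agree with those of $X$ before $D_B$. The quasi-regularity conditions for $(\sE^G,\sF^G)$ — existence of an $\sE^G$-nest of compacts, an $\sE^G_1$-dense set of quasi-continuous functions, and a countable separating family — all transfer by restriction from the corresponding data for $(\sE,\sF)$, using that quasi-continuous functions on $E$ restrict to quasi-continuous functions on $G$ relative to the restricted nest and that elements of $\sF^G$ are obtained as $\sE_1$-limits controlled on these nests. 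The reference \cite[(12.24)]{S88} already guarantees $X^G$ is a right process on the Radon space $(G,\sB^*(G))$; combined with the above it is an $m|_G$-tight Borel right process properly associated with the quasi-regular Dirichlet form $(\sE^G,\sF^G)$.

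\textbf{Main obstacle.} The delicate point is the precise handling of the hitting time $D_B$ versus $\zeta$-q.e.\ identifications: showing that $\alpha$-potentials of the part process vanish $\sE$-q.e.\ (not merely $m$-a.e.) on $B$, and that the relevant nests restrict correctly. This is where regularity of $D_B$ (as an exact terminal time, and the coincidence of $B$ with its fine closure up to a set of zero capacity) and the quasi-continuity of potentials must be invoked carefully; in the non-symmetric setting one must also be cautious that the co-form behaves well, which is why the argument is phrased through resolvent identities and the characterization of $\sF^G$ rather than through symmetric potential theory. I expect the rest — closedness of $\sF^G$, the Markovian and coercivity properties, and the tightness transfer — to be routine given the machinery recalled in the appendix.
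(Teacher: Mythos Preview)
Your proposal for the proper association part (the resolvent decomposition $R_\alpha f = R^G_\alpha f + \bH^\alpha_B R_\alpha f$, showing $R^G_\alpha f \in \sF^G$ via regularity of $D_B$, and the $\sE_\alpha$-orthogonality of the hitting term against $\sF^G$) is a correct and classical route; the paper simply cites \cite[Theorem~5.10]{F01} for this, so your more detailed sketch is fine but not where the work lies.

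The genuine gap is in your tightness argument. You write that the compacts $K_n$ from an $\sE$-nest, ``suitably adjusted to compacts in $G$'', form an $\sE^G$-nest, and justify this by observing that exit times for $X^G$ are bounded by those for $X$ since $\zeta^G\leq \zeta$. The exit-time comparison is correct, but it does not produce \emph{compact} subsets of $G$: the set $K_n\cap G$ is generally not closed in $E$ (because $B$ is only finely closed, not closed), hence not compact. This is precisely the point the paper has to address, and your phrase ``suitably adjusted'' hides the entire difficulty. The paper's device is to introduce the finely continuous (hence $\sE$-quasi-continuous) function $u(x):=1-\bP_x e^{-T_B}$, take an $\sE$-nest $\{K_n\}$ of compacts on which $u$ is continuous and which avoids the $\sE$-polar set $B\setminus B^r$, and then set $K^G_n:=K_n\cap\{u\geq 1/n\}$. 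These are compact (as closed subsets of $K_n$) and lie in $G$ (since $\{u>0\}\setminus (B\setminus B^r)=G$). The verification that $T^G_{G\setminus K^G_n}\to \zeta^G$ then splits into the $K_n$-part (handled by your comparison) and the $\{u\geq 1/n\}$-part, where one uses quasi-left-continuity to pass to the limit and the fact that $u(X_T)=0$ is impossible for $T<T_B\wedge \zeta$.

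Your ``Main obstacle'' paragraph focuses on q.e.\ versus $m$-a.e.\ issues for potentials, which are real but routine here; the actual obstacle you should have flagged is the construction of compacts inside the merely finely open set $G$.
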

\begin{proof}
This characterization has already been established in \cite[Theorem~5.10]{F01}.  However, in the discussion of that theorem, tightness is established in the space \eqref{eq:35},  which is slightly larger than $G$. {Quasi-regularity is a direct consequence of tightness, as stated in \cite[Theorem~3.22]{F01}; see also \cite[Proposition~3.2~(i)]{K08}.} In the following, we will adopt a similar approach to that discussed in \cite[Theorem~3.3.8]{CF12} to prove the tightness of $X^G$ within $G$.

Define $u(x):=1-\bP_x e^{-T_B}$ for $x\in E$.  Then $u$ is finely continuous, and hence $\sE$-quasi-continuous by Lemma~\ref{LM32}~(3).  It follows that $\{u>0\}=E\setminus B^r=G\cup (B\setminus B^r)$,  where $B^r$ denotes the set of all regular points for $B$.  Note that $N:=B\setminus B^r$ is semipolar,  and hence also $\sE$-polar.  Consider an $\sE$-nest $\{K_n\}$ consisting of compact sets such that $\bigcup_{n\geq 1}K_n\subset E\setminus N$ and $u\in C(\left\{K_n\right\})$.  Let $F_n:=\{u\geq 1/n\}$ and $K^G_n:=K_n\cap F_n$.  It is straightforward to verify that $K^G_n$ is a compact subset of $G$.  Define $T^G_n:=\inf\{t>0:X^G_t\in G\setminus K^G_n\}$.  Note that $T^G_n\geq T_{G\setminus K_n}\wedge T_{G\setminus F_n}$.  For $m$-a.e. $x\in G$,  we have
\[
\begin{aligned}
	\left\{\lim_{n\rightarrow \infty}T^G_n<\zeta^G\right\}&\subset \left\{\lim_{n\rightarrow \infty}T_{G\setminus K_n}<T_B\wedge \zeta\right\}\cup \left\{\lim_{n\rightarrow \infty}T_{G\setminus F_n}<T_B\wedge \zeta\right\} \\
	&\subset  \left\{\lim_{n\rightarrow \infty}T_{E\setminus K_n}< \zeta\right\}\cup \left\{\lim_{n\rightarrow \infty}T_{G\setminus F_n}<T_B\wedge \zeta\right\}.  
\end{aligned}\]
By the $m$-tightness of $X$,  it suffices to demonstrate that for $m$-a.e.  $x\in G$,
\[
	\bP_x\left(\lim_{n\rightarrow \infty}T_{G\setminus F_n}<T_B\wedge \zeta\right)=0.
\]
In fact,  it follows from the quasi-left-continuity of $X$ and \cite[(4.8)]{F01} that
\[
	\bP_x\left(\lim_{n\rightarrow \infty}T_{G\setminus F_n}<T_B\wedge \zeta\right)=\bP_x\left(u(X_T)=\lim_{n\rightarrow \infty}u(X_{T_{G\setminus F_n}}), T<T_B\wedge \zeta \right),
\]
where $T:=\lim_{n\rightarrow \infty}T_{G\setminus F_n}$.  Since $X_{T_{G\setminus F_n}}\in \left(G\setminus F_n\right)^r\subset \{u\leq 1/n\}$,  we obtain
\[
\bP_x\left(\lim_{n\rightarrow \infty}T_{G\setminus F_n}<T_B\wedge \zeta\right)=\bP_x\left(u(X_T)=0,T<T_B\wedge \zeta\right)=0.
\]
This completes the proof.
\end{proof}

It is important to examine the relationship between $\sE^G$-quasi-notion and $\sE$-quasi-notion. 
Let $\{F_n:n\geq 1\}$ be an $\sE$-nest, and define $F^G_n:=F_n\cap G$ for $n\geq 1$.  Since
\[
	T^G_{G\setminus F^G_n}:=\inf\{t>0: X_t\in (E\setminus F_n)\cap G\}\geq T_{E\setminus F_n},
\]
it follows from Lemma~\ref{LM32}~(2) that $\{F^G_n:n\geq 1\}$ is an $\sE^G$-nest.  In particular,  the restriction of an $\sE$-quasi-continuous function to $G$ is $\sE^G$-quasi-continuous,  and $N\cap G$ is $\sE^G$-polar if $N$ is $\sE$-polar.

{We present another two facts that will be utilized subsequently.

\begin{corollary}\label{COR33}
\begin{itemize}
\item[(1)] A set $N\subset G$ is $\sE^G$-polar if and only if it is $\sE$-polar.
\item[(2)] Let $G_1\subset G$.  Then, $G_1$ is $\sE$-quasi-open  if and only if it is $\sE^G$-quasi-open.
\end{itemize}
\end{corollary}
\begin{proof}
See \cite[Proposition~3.2~(ii)]{K08}. 
\end{proof}}

\subsection{Multiplicative functionals and killing transformation}\label{SEC31}

As  a significant type of transformation for Markov processes (see \cite[III]{BG68}), 
the general killing transformation is defined through the use of decreasing multiplicative functionals.  It is important to note that the formulation in this subsection is applicable not only to Borel right processes but also to general right processes on a Radon topological space; see, e.g., \cite[Chapter VII]{S88}.

\begin{definition}\label{DEF33}
A real-valued process $M=(M_t)_{t\geq 0}$ is called a \emph{multiplicative functional} (MF, for short) of $X$ if it satisfies the following conditions:
\begin{itemize}
\item[(1)] The map $t\mapsto M_t(\omega)$ is decreasing,  right continuous and takes values in $[0,1]$ for all $\omega\in \Omega$;
\item[(2)] $M$ is adapted, i.e., $M_t\in \mathcal{F}_t$ for every $t\geq 0$;
\item[(3)] $M_{t+s}(\omega)=M_t(\omega)M_s(\theta_t\omega)$ holds for all $s,t\geq 0$ and $\omega\in \Omega$.
\end{itemize}
\end{definition}
\begin{remark}
In the conventional definition of an MF,  as presented in \cite[III, Definition~1.1]{BG68}, the third condition is expressed in an \emph{a.s.} sense.  That is, for  $t, s \geq 0$  and any  $x \in E$, the set  $\{\omega \in \Omega : M_{t+s}(\omega) \neq M_t(\omega)M_s(\theta_t \omega)\}$  is a $\bP_x$-null set. This type of definition is referred to as a \emph{weak} MF in \cite[\S54]{S88}.  In contrast, an MF that satisfies  $\bigcup_{s,t \geq 0} \{\omega \in \Omega : M_{t+s}(\omega) \neq M_t(\omega)M_s(\theta_t \omega)\} = \emptyset$ is called a \emph{perfect} MF. In fact, since a weak MF $M$ always admits a \emph{perfect regularization} $\bar{M}$ (see \cite[(55.19)]{S88}), it follows that $M$ is equivalent to a perfect MF $\bar{M}\cdot 1_{[0,D_{E\setminus E_M})}$ in the sense that
\[
	M_t=\bar{M}_t\cdot 1_{[0,D_{E\setminus  E_M})}(t),\quad \forall t\geq 0, \; \text{a.s.}
\]
provided that the set $E_M$, defined below, is nearly Borel measurable. 
\end{remark}

For an MF $M$,  we define $E_M:=\{x\in E: \bP_x(M_0=1)=1\}$,  the set of permanent points of $M$,  and $S_M:=\inf\{t>0:M_t=0\}$, the lifetime of $M$.  There is no loss of generality in assuming that $M_t=0$ for $t\geq \zeta$ since we can always replace $M$ with another \emph{equivalent} right continuous MF,  given by $M_t1_{[0,\zeta)}(t)$.  Particularly,  $M_t([\Delta])=0$ for all $t\geq 0$.  In addition, note that if $E_M$ is nearly Borel measurable and $D_{E\setminus E_M}:=\inf\{t\geq 0: X_t\in E\setminus E_M\}<\infty$,  then $M_{D_{E\setminus E_M}}=0$ and, in particular,  $S_M\leq D_{E\setminus E_M}$; see \cite[(57.2)]{S88}.  

 Let $\text{MF}(X)$ denote the family of all MFs $M$ of $X$ such that $E_M$ is finely open and $M_\zeta=0$. Further,  we define
\[
	\text{MF}_+(X):=\{M\in \text{MF}(X): E_M=E\},\quad \text{MF}_{++}(X):=\{M\in \text{MF}(X)_+: S_M=\zeta\}. 
\]
Since {a finely open set is \emph{optional} (see \cite[Theorem~8.6~(iii)]{S88})},  every $M\in \text{MF}(X)$ is a \emph{right} MF in the sense of \cite[(57.1)]{S88}.  In this context,  we can define a probability measure $\mathbb{Q}_x$ on $(\Omega, \mathcal F^0)$ for each $x\in E_M$  by
\begin{equation}\label{eq:33}
	\bQ_x(Z):=\bP_x \int_{(0,\infty]}Z\circ k_t d(-M_t),\quad Z\in \text{b}\cF^0,
\end{equation}
where $M_\infty:=0$,  $(k_t)_{t\geq 0}$ are the killing operators on $\Omega$ defined by $k_t\omega(s):=\omega(s)$ if $t>s$ and $k_t\omega(s)=\Delta$ if $t\leq s$,  and $\cF^0$ is the $\sigma$-algebra generated by the maps $f(X_t)$ with $t\geq 0$ and $f\in \sB(E)$.  According to,  e.g.,  \cite[(61.5)]{S88},  the restriction of $(\Omega, X_t,  \bQ_x)$ to $E_M$ is a right process on $E_M$,  referred to as the ($M$-)subprocess of $X$ and denoted by $X^M$ or $(X,M)$.   A more intuitive construction for subprocesses can also be found in \cite[III\S3]{BG68}. Clearly, the transition semigroup $(Q_t)_{t\geq 0}$ and the resolvent $(V_\alpha)_{\alpha>0}$ of $X^M$ are given by
\[
\begin{aligned}
	&Q_t f(x)=\bP_x \left(f(X_t)M_t\right), \\
	&V_\alpha f(x)=\bP_x \int_0^\infty e^{-\alpha t}f(X_t)M_tdt
\end{aligned}\]
for $f\in \sB(E_M)$ and $x\in E_M$.  Note that $Q_tf\in \sB^*(E_M)$, but it is not necessarily Borel measurable even if $E_M$ is Borel measurable. 

\begin{remark}
An MF $M$ is termed \emph{exact} if for any $t>0$ and every sequence $t_n\downarrow 0$, it holds that $M_{t-t_n}\circ \theta_{t_n}\rightarrow M_t$,  a.s., as $n\rightarrow\infty$.  If $M$ is exact,  then $E_M$ is finely open,  implying that $M\in \text{MF}(X)$; see \cite[(56.10)]{S88}.  Furthermore, every $M\in \text{MF}_+(X)$ is exact; see \cite[III, Corollary~4.10]{BG68}. 

It should be noted that in \cite{Y962},  $\text{MF}(X)$ represents all exact MFs of $X$, which differs from its usage in this paper.  In fact, for any finely closed set $B$, $1_{[0,D_B \wedge \zeta)}$ is an MF of $X$, with the set of its permanent points being the {finely open} set $G=E\setminus B$.  Namely,  $1_{[0,D_B\wedge \zeta)}\in \text{MF}(X)$ in our context. However,  $1_{[0,D_B\wedge \zeta)}$ is not exact,  as $\lim_{t\downarrow 0}(t+D_B\circ \theta_t)=T_B$ (not $D_B$; see \cite[(10.4)]{S88}). 

It is also important to note that another MF $1_{[0,T_B\wedge \zeta)}$ defined by the first hitting time $T_B$ is typically exact; however, its set of permanent points is
\begin{equation}\label{eq:35}
E_G:=\left\{x\in E: \bP_x(T_B=0)=0\right\}=G \cup (B\setminus B^r),
\end{equation}
which is not necessarily equal to $G$.  Killing $X$ by $1_{[0,T_B\wedge \zeta)}$ can also be understood as killing the part process $X^{E_G}$ using $1_{[0,T_B\wedge \zeta)}\in \text{MF}_+(X^{E_G})$.  This killing transformation yields a right process,  denoted by $X^{T_B}$, on $E_G$, as examined in \cite[Theorem~5.10]{F01}.  It is noteworthy that $E_G\setminus G$ is $\sE$-polar,  and hence $m|_{E_G}$-inessential for $X^{T_B}$.  In particular,  the restriction of $X^{T_B}$ to $G$ is precisely the part process $X^G$.
\end{remark}

We present a simple observation for future reference.  Recall that $X^G$,  as expressed in \eqref{eq:30}, denotes the part process of $X$ on $G$ when $G$ is finely open.

\begin{lemma}\label{LM36}
If $M\in \text{MF}(X)$, then $M \in \text{MF}_+(X^{E_M})$, and the subprocess $(X,M)$ is the same as the subprocess $(X^{E_M},M)$.   Conversely,  if $G$ is finely open and $M\in \text{MF}_+(X^G)$,  then $M\cdot 1_{[0,D_{E\setminus G})}\in \text{MF}(X)$,  whose set of permanent points is $G$,  and the subprocess $(X^G,M)$ is the same as $(X, M\cdot 1_{[0,D_{E\setminus G})})$.  
\end{lemma}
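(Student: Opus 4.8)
The plan is to unwind the definitions of the subprocess via the killing operators and the measure $\bQ_x$ in \eqref{eq:33}, and to track carefully how the set of permanent points and the lifetime behave under the two constructions. For the first assertion, suppose $M\in\text{MF}(X)$, so $E_M$ is finely open and $M_\zeta=0$. First I would verify that $M$, viewed as a process indexed by $t\geq 0$, is still an MF of the part process $X^{E_M}$: conditions (1) and (2) of Definition~\ref{DEF33} are immediate since the filtration and sample space are unchanged, and the perfect multiplicativity (3) survives because $\theta^{E_M}_t$ agrees with $\theta_t$ on $[0,\zeta^{E_M})$ and $M_t([\Delta])=0$. Next I would show $E_M^{X^{E_M}}=E_M$: for $x\in E_M$ we have $\bP_x(M_0=1)=1$ and $\zeta^{E_M}>0$ $\bP_x$-a.s. (since $E_M$ is finely open and $x\in E_M$), so $x$ is permanent for $M$ relative to $X^{E_M}$; conversely there are no other points to consider since the state space of $X^{E_M}$ is $E_M$. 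Thus $M\in\text{MF}_+(X^{E_M})$. Finally, to see that the subprocesses coincide, I would compare the defining formulas \eqref{eq:33}: because $S_M\leq D_{E\setminus E_M}\wedge\zeta = \zeta^{E_M}$ (using \cite[(57.2)]{S88}), the integral $\int_{(0,\infty]}Z\circ k_t\,d(-M_t)$ is supported on $t\leq \zeta^{E_M}$, on which $k_t$ composed with the $X$-paths and with the $X^{E_M}$-paths give the same killed path; hence $\bQ_x$ is the same measure whether computed from $X$ or from $X^{E_M}$, and the resulting right processes on $E_M$ agree. Equivalently one can simply compare transition semigroups: $Q_tf(x)=\bP_x(f(X_t)M_t)=\bP_x(f(X^{E_M}_t)M_t)$ since $M_t=0$ for $t\geq\zeta^{E_M}$.

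For the converse, let $G$ be finely open and $M\in\text{MF}_+(X^G)$, so $E_M^{X^G}=G$ and $M_{\zeta^G}=0$. Set $N_t:=M_t\cdot 1_{[0,D_{E\setminus G})}(t)$. I would first check $N$ is an MF of $X$: right-continuity and monotonicity in $[0,1]$ are clear, adaptedness holds because $D_{E\setminus G}$ is an $(\cF_t)$-stopping time ($G$ being finely open, hence nearly Borel), and the multiplicativity $N_{t+s}(\omega)=N_t(\omega)N_s(\theta_t\omega)$ follows from that of $M$ (as an MF of $X^G$, equivalently on $[0,D_{E\setminus G})$) together with the identity $D_{E\setminus G}(\omega) = t + D_{E\setminus G}(\theta_t\omega)$ on $\{t<D_{E\setminus G}(\omega)\}$ and $D_{E\setminus G}(\theta_t\omega)=0$ otherwise. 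Then $N_\zeta=0$ because $M_{\zeta^G}=0$ and $D_{E\setminus G}\wedge\zeta=\zeta^G$. To identify the permanent points: for $x\in G$, $\bP_x(N_0=1)=\bP_x(M_0=1,D_{E\setminus G}>0)=1$ since $G$ is finely open and $M\in\text{MF}_+(X^G)$; for $x\in E\setminus G$, $D_{E\setminus G}=0$ $\bP_x$-a.s. so $N_0=0$, hence $x\notin E_N$. Therefore $E_N=G$, which is finely open, giving $N\in\text{MF}(X)$. Lastly, the subprocess identity $(X^G,M)=(X,N)$ again reduces to comparing \eqref{eq:33}: on the support $\{t<D_{E\setminus G}\wedge\zeta\}=\{t<\zeta^G\}$ of $d(-N_t)$ the killed paths $k_t$ applied to $X$ and to $X^G$ coincide, so $\bQ_x^{(X,N)}=\bQ_x^{(X^G,M)}$ for all $x\in G$.

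I expect the main obstacle to be the careful handling of the perfect multiplicativity identity (3) for the two constructed functionals, since that identity is required to hold for \emph{all} $\omega$ rather than almost surely — in particular one must use that $M$ for the part process is genuinely the restriction $M_t\cdot 1_{[0,\zeta^G)}$ and invoke the cocycle property $D_{E\setminus G}(\omega)=t+D_{E\setminus G}(\theta_t\omega)$ on $\{t<D_{E\setminus G}(\omega)\}$, which holds perfectly because $\theta_t$ is the genuine shift on $\Omega$. A secondary technical point is the inequality $S_M\leq D_{E\setminus E_M}$ used in the forward direction, which is exactly \cite[(57.2)]{S88} and requires $E_M$ to be nearly Borel — guaranteed here since $E_M$ is finely open and $X$ is Borel. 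Everything else is bookkeeping: matching lifetimes via $\zeta^G=D_{E\setminus G}\wedge\zeta$ and matching transition semigroups via the formula $Q_tf(x)=\bP_x(f(X_t)M_t)$, which is manifestly insensitive to whether one kills $X$ directly or first passes to the part process.
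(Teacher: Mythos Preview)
Your proposal is correct and follows essentially the same approach as the paper: verify the MF axioms for the transferred functional, identify the permanent points via the fineness of $G$ (or $E_M$), and then compare the subprocesses through the defining measure \eqref{eq:33}, using $S_M\leq D_{E\setminus E_M}$ (equivalently $M_{D_{E\setminus E_M}}=0$) so that the relevant killing operators agree on the support of $d(-M_t)$. The paper phrases the last step slightly differently, writing the killing operators of $X^{E_M}$ as $\hat k_t=k_{t\wedge D_{E\setminus E_M}}$ and computing $\hat\bQ_x=\bQ_x$ directly, but this is exactly your argument in different notation.
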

\begin{proof}
It is straightforward to verify the conditions of Definition~\ref{DEF33} to obtain that if $M\in \text{MF}(X)$,  then $M\cdot 1_{[0,D_{E\setminus E_M})}$ is an MF of $X^{E_M}$. The set of its permanent points is clearly $E_M$,  since $D_{E\setminus E_M}>0$,  $\bP_x$-a.s. for all $x\in E_M$.  Note that $M_\zeta=0$ and $M_{D_{E\setminus E_M}}=0$.  Thus,  $M_{\zeta^{E_M}}=M_{\zeta\wedge D_{E\setminus E_M}}=0$.  This implies that $M=M\cdot 1_{[0,D_{E\setminus E_M})} \in \text{MF}_+(X^{E_M})$.  
To establish the identification between $(X,M)$ and $(X^{E_M},M)$,  we observe that 
\[
	\hat{k}_t:=k_{t\wedge D_{E\setminus E_M}},\quad t\geq 0
\] 
are the killing operators of $X^{E_M}$.  The probability measure $\hat{\bQ}_x$ of $(X^{E_M},M)$ on $(\Omega,\cF^0)$ for $x\in E_M$ is given by
\[
	\hat{\bQ}_x(Z)=\bP_x\int_{(0,\infty]}Z\circ \hat{k}_t d(-M_t),\quad Z\in \mathrm{b}\cF^0. 
\]
It follows from $M_{D_{E\setminus E_M}}=0$ that
\[
\begin{aligned}
	\hat{\bQ}_x(Z)&=\bP_x\int_{(0,D_{E\setminus E_M}]} Z\circ \hat{k}_t d(-M_t)=\bP_x\int_{(0,D_{E\setminus E_M}]} Z\circ k_t d(-M_t)\\
	&=\bP_x\int_{(0,\infty]}Z\circ k_td(-M_t)=\bQ_x(Z).
\end{aligned}
\]
According to \cite[(61.5)]{S88},  we can obtain the desired conclusion.

The converse part can be demonstrated in a similar manner.
\end{proof}
\begin{remark}\label{RM37}
An $(\cF_t)$-stopping time $T$ is called a \emph{(perfect) terminal time} if the set $\{\omega\in \Omega: t+T(\theta_t\omega)\neq T(\omega),\exists t<T(\omega)\}$ is empty; see \cite[(12.1)]{S88}.  Typical examples of terminal times include the first hitting time $T_B$ and the first entrance time $D_B$ for any nearly Borel set $B$.  Consider a terminal time $T$ such that  $0<T\leq \zeta$.  We have $1_{[0,T)}\in \text{MF}_+(X)$,  and we denote  by $X^T$  (with lifetime $\zeta^T=T$) the subprocess of $X$ that is killed by $1_{[0,T)}$; see \cite[(12.23)]{S88}.  By employing a similar argument, it is straightforward to verify the following facts:
\begin{itemize}
\item[(1)] If $M$ is an MF of $X$,  then $M\cdot 1_{[0,T)}$ is an MF of $X^T$ with the same set of permanent points.  
\item[(2)] For $M\in \text{MF}_+(X)$ and $T:=S_M$,  it holds that $M\in\text{MF}_{++}(X^T)$. Furthermore,  the subprocess $(X,M)$ is the same as $(X^T,M)$. 
\end{itemize}
Additionally,  according to \cite[Theorem~2.2]{Y962},  every $M\in \text{MF}_+(X)$ admits the decomposition
\begin{equation}\label{eq:410}
	M_t=\prod_{0<s\leq t}\left(1-\Phi(X_{s-},X_{s})\right) \exp\left\{-\int_0^t a(X_s)dA_s\right\}1_{[0,J_\Gamma\wedge \zeta)}(t),
\end{equation}
where $\Phi\in \sB(E\times E), 0\leq \Phi<1$,  $\Phi$ vanishes on the diagonal $d$ of $E\times E$,  $a\in \mathrm{p}\sB(E)$,  $A$ is a positive continuous additive functional of $X$,  $\Gamma\in \sB(E\times E)$ is disjoint from $d$ such that $S_M=J_\Gamma\wedge \zeta$, where $J_\Gamma:=\inf\{t>0:(X_{t-},X_t)\in \Gamma\}>0$,  a.s.  (Every exact terminal time admits a representation of the form $J_\Gamma$; see \cite[Theorem~6.1]{S71}.)  Note that the product and the integral in \eqref{eq:410} can diverge only at $t\geq J_{\Gamma}$.  See also \cite[Theorem~7.1]{S71}.  
\end{remark}


\subsection{Bivariate Revuz measures of MFs}\label{SEC43}

For an MF $M$ of $X$,  we define its \emph{bivariate Revuz measure} $\nu_M$ (relative to $m$) by
\begin{equation}\label{eq:44}
	\nu_M(F):=\lim_{t\downarrow 0}\frac{1}{t}\bP_m \int_0^t F(X_{s-},X_s)d(-M_s),\quad F\in \mathrm{p}\sB(E\times E).
\end{equation}
The existence of $\nu_M$ is referred to in \cite{FG88}.  (In fact, $t\mapsto \int_0^t F(X_{s-},X_s)d(-M_s)$ is a raw additive functional of $X^M$ in the sense of \cite[Definition~2.3]{FG88}.  Hence, the limit in \eqref{eq:44} exists in $[0,\infty]$,  as demonstrated in \cite[Proposition~2.5]{FG88}).   For $f,g\in \sB(E)$,  we write $f(x)g(y)$ as $f\otimes g$.  Denote by $\rho_M$ and $\lambda_M$ the marginal measures of $\nu_M$,  i.e.,  for any $f\in \mathrm{p}\sB(E)$,
\[
	\rho_M(f):=\nu_M(1\otimes f),\quad \lambda_M(f):=\nu_M(f\otimes 1). 
\]
Note that any function defined on  $E$  is extended to  $E \cup \{\Delta\}$  by setting  $f(\Delta)=0$. Similarly, for $F \in \sB(E \times E)$, whenever  $x$  or  $y$  is  $\Delta$, we define  $F(x, y) := 0$.  Particularly,  the actual integration interval in \eqref{eq:44} is $[0, t] \cap [0, S_M] \cap [0, \zeta)$.

The significance of bivariate Revuze measures lies in the representation of the Dirichlet form associated with the subprocess $X^M$.  Let $m^*:=m|_{E_M}$.  The process $X^M$ is called \emph{nearly $m^*$-symmetric} if its transition semigroup $(Q_t)_{t\geq 0}$ acts on $L^2(E_M,m^*)$ as a strongly continuous contraction semigroup,  and its infinitesimal generator $\sL$ with domain $\cD(\sL)\subset L^2(E_M,m^*)$ satisfies the \emph{sector condition}.  This condition states that there exists a finite constant $K\geq 1$ such that 
\[
	\left|(-\sL f,g)_{m^*}\right|\leq K\cdot (-\sL f,f)_{m^*}^{1/2}\cdot (-\sL g,g)_{m^*}^{1/2}
\]
for all $f,g\in \cD(\sL)$.  The following theorem characterizes the Dirichlet form of $X^M$ for the case where $M \in \text{MF}_{++}(X)$. Its general form will be presented in Theorem~\ref{THM33}.

\begin{theorem}\label{THM43}
Let $M\in \text{MF}_{++}(X)$.  Then the subprocess $X^M$ is nearly $m$-symmetric,  and the associated Dirichlet form on $L^2(E,m)$ is given by
\[
\begin{aligned}
	&\sF^M=\sF\cap L^2(E,\rho_M+\lambda_M),\\
	&{\sE^M(f,g)}=\sE(f,g)+\nu_M(\tilde f\otimes \tilde g),\quad f,g\in \sF^M.
\end{aligned}\]
\end{theorem}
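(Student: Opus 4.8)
The plan is to establish this in two stages, first proving that $X^M$ is nearly $m$-symmetric and identifying its generator with a coercive closed form, and then computing the form explicitly via the bivariate Revuz measure. Since $M \in \mathrm{MF}_{++}(X)$ means $E_M = E$ and $S_M = \zeta$, the subprocess $X^M$ lives on all of $E$ and is simply $X$ killed by the multiplicative functional $M$, with $(Q_t f)(x) = \bP_x(f(X_t) M_t)$. Using the representation \eqref{eq:410} of $M$, the killing functional decomposes into a jump-type part governed by $\Phi$ (killing at jumps $(X_{s-},X_s) \in$ support of $\Phi$), a continuous part $\exp\{-\int_0^t a(X_s)\,dA_s\}$ governed by a positive continuous additive functional, and the indicator $1_{[0,J_\Gamma \wedge \zeta)}$; but since $S_M = \zeta$ we have $J_\Gamma \wedge \zeta = \zeta$, so this last factor is trivial. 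One first verifies $\|Q_t f\|_2 \le \|T_t f\|_2 \le \|f\|_2$ because $M_t \le 1$, giving a strongly continuous contraction semigroup on $L^2(E,m)$ once strong continuity at $0$ is checked (which follows from $M_t \to 1$ pointwise as $t \downarrow 0$ on $E_M = E$, together with dominated convergence).

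The heart of the proof is the form identity. I would work with the resolvents: let $(G_\alpha)$, $(V_\alpha)$ be the resolvents of $X$, $X^M$ respectively. The resolvent equation $V_\alpha = G_\alpha - G_\alpha K_\alpha V_\alpha$ in a suitable sense, where $K_\alpha$ encodes the ``killing rate,'' is the probabilistic shadow of the form identity $\sE^M_\alpha(f,g) = \sE_\alpha(f,g) + \nu_M(\tilde f \otimes \tilde g)$. Concretely, for $u = V_\alpha f$ one computes, using the MF property $M_{t+s} = M_t (M_s \circ \theta_t)$ and integration by parts in the Stieltjes sense with respect to $d(-M_s)$, that
\[
\bP_x \int_0^\infty e^{-\alpha t} f(X_t) M_t\, dt = \bP_x \int_0^\infty e^{-\alpha t} f(X_t)\, dt - \bP_x \int_0^\infty e^{-\alpha s} (\widetilde{V_\alpha f})(X_{s-})\,d(-M_s) + (\text{jump correction}),
\]
where the jump correction accounts for the discontinuity of $M$ at the jumps of $X$; testing against $\alpha g\, dm$ and letting $\alpha \to \infty$ (or comparing at a fixed $\alpha$) and invoking the defining limit \eqref{eq:44} of $\nu_M$ converts the right-hand expectation into $\nu_M(\tilde u \otimes \tilde g)$. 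The cleanest route, which I would follow, is to first treat the purely continuous case $\Phi \equiv 0$ (classical Feynman--Kac/perturbation by a PCAF, where $\nu_M = a\,\mu_A$ is carried by the diagonal and the formula reduces to $\sE^M(f,g) = \sE(f,g) + \int \tilde f \tilde g \, a\, d\mu_A$, a known perturbation result), then treat the pure-jump case (where $\nu_M$ is the off-diagonal Revuz measure of the jump functional $\prod(1 - \Phi(X_{s-},X_s))$), and finally combine by the multiplicativity of $M$.

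For identifying the form domain as $\sF \cap L^2(E, \rho_M + \lambda_M)$, I would argue both inclusions. If $u \in \sF^M = \cD((\sE^M)^{1/2}\text{-completion})$ then applying the form identity to $u$ itself gives $\sE(u,u) + \nu_M(\tilde u \otimes \tilde u) < \infty$; since $\nu_M(\tilde u \otimes \tilde u) \ge 0$ controls (up to the off-diagonal cross term, handled by a Cauchy--Schwarz / Young inequality $|\tilde u(x)\tilde u(y)| \le \frac12(\tilde u(x)^2 + \tilde u(y)^2)$) the quantity $\int \tilde u^2 \, d(\rho_M + \lambda_M)$, we get $u \in \sF$ and $u \in L^2(\rho_M + \lambda_M)$. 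The reverse inclusion uses that $\mathrm{b}\sF$ (or a core) is dense and that on such elements the formula $\sE^M(u,u) = \sE(u,u) + \nu_M(\tilde u \otimes \tilde u)$ holds with both terms finite, so by the closedness of $\sE^M$ (equivalently, a Banach--Saks / Fatou argument as in the proof of Lemma~\ref{LM21}) any $u \in \sF \cap L^2(\rho_M + \lambda_M)$ lies in $\sF^M$. The sector condition for $\sL$ is inherited: the symmetric part of $\sE^M$ is $\sE^{\mathrm{sym}} + \nu_M(\tilde f \otimes \tilde g)$ and the antisymmetric part is that of $\sE$ alone, so the sector bound for $\sE$ plus positivity of the added symmetric term yields the sector bound for $\sE^M$.

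The main obstacle I anticipate is the rigorous justification of the Stieltjes integration-by-parts manipulation that produces the identity relating $V_\alpha$ to $G_\alpha$ and the Revuz-measure term — in particular, controlling the jump correction term and proving that the relevant limit $\frac1t \bP_m \int_0^t (\cdot)\, d(-M_s) \to \nu_M$ applies to the specific integrand $\tilde u \otimes \tilde v$ rather than just to nonnegative Borel $F$, which requires approximating quasi-continuous functions by bounded Borel ones along an $\sE$-nest and checking the limit passes through. Handling the possibly infinite products/integrals in \eqref{eq:410} near $t = \zeta$ is a secondary technical nuisance, but since $S_M = \zeta$ here and $M_\zeta = 0$ this is benign. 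A further subtlety is that $Q_t f$ need not be Borel, only universally measurable, so one must be slightly careful that all the $L^2$ statements are about equivalence classes and that quasi-continuous versions are used throughout — this is routine in the quasi-regular framework but worth stating explicitly.
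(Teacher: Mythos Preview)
The paper itself does not give a proof of this theorem; it simply cites \cite[Theorem~3.10]{Y962}. So there is nothing in the paper to compare your argument against line by line --- your proposal is an attempt to reconstruct what that external reference does. The overall strategy you sketch (resolvent comparison via Stieltjes integration by parts against $d(-M_s)$, passage to the Revuz limit~\eqref{eq:44}, then domain identification by two inclusions) is the standard route and is almost certainly what \cite{Y962} does.

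There is, however, a genuine error in your sector-condition argument. You write that ``the antisymmetric part is that of $\sE$ alone,'' i.e.\ that $\nu_M(\tilde f\otimes\tilde g)$ is symmetric in $(f,g)$. This is false in general: the bivariate Revuz measure $\nu_M$ need \emph{not} be symmetric on $E\times E$. Indeed, from \eqref{eq:46} the off-diagonal part of $\nu_M$ is $\Phi(x,y)\,\nu(dxdy)$, and neither $\Phi$ nor the canonical measure $\nu$ is assumed symmetric in the non-symmetric Dirichlet form setting. The paper makes this explicit at the end of \S\ref{SEC43}: ``$(\sE^M,\sF^M)$ is symmetric if and only if $\nu_M$ is symmetric.'' So the antisymmetric part of $\sE^M$ is $\check\sE(f,g)+\tfrac12\bigl(\nu_M(\tilde f\otimes\tilde g)-\nu_M(\tilde g\otimes\tilde f)\bigr)$, and the second term does not vanish. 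Your argument for coercivity therefore collapses.

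The repair is not difficult but requires a different estimate: since $\nu_M|_{E\times E\setminus d}\le \nu$ and, by \eqref{eq:39}, $\int (\tilde f(x)-\tilde f(y))^2\,\nu(dxdy)$ is controlled by $\sE(f,f)$, one can bound the antisymmetric off-diagonal contribution of $\nu_M$ in terms of $\sE^s(f,f)^{1/2}\sE^s(g,g)^{1/2}$ plus the $L^2(\rho_M+\lambda_M)$ norms; the diagonal part of $\nu_M$ is genuinely symmetric and positive, and together these pieces yield the sector bound for $\sE^M_1$. Relatedly, your domain argument appeals to ``$\nu_M(\tilde u\otimes\tilde u)\ge 0$,'' which is also not true off the diagonal when $\tilde u$ changes sign; the correct bookkeeping again routes through the control of the off-diagonal part by $\nu$ and \eqref{eq:39}. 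These are fixable gaps, but as written the two steps would fail.
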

\begin{proof}
See \cite[Theorem~3.10]{Y962}.
\end{proof}

We present a characterization of the bivariate Revuz measure $\nu_M$ for $M\in \text{MF}_+(X)$, expressed as in \eqref{eq:410}.    Let $(N,H)$ be the L\'evy system of $X$ (see \cite[\S73]{S88}).  Note that $H$ is a positive continuous additive functional of $X$,  whose Revuz measure with respect to $m$ is denoted by $\rho_H$.  As established in \cite[Theorem~4.6]{Y962}, the following representation of $\nu_M$ holds:
\begin{equation}\label{eq:46}
	\nu_M(dxdy)=\left(1_\Gamma+1_{\Gamma^c}\cdot \Phi\right)(x,y)\nu(dxdy)+\delta_{y}(dx)a(y)\rho_A(dy),
\end{equation}
where {$\Gamma, \Phi, a, A$ are given in \eqref{eq:410},  $\rho_A$ stands for the Revuz measure of $A$,} $\nu(dxdy):=N(x,dy)\rho_H(dy)$ is referred to as the \emph{canonical measure} of $X$ (off the diagonal $d$),  and $\delta_y$ denotes the Dirac measure at $y$.  

\begin{remark}
If $(\sE,\sF)$ is symmetric,  then the canonical measure $\nu$ of $X$ coincides exactly with the jumping measure of $(\sE,\sF)$,  as stated in \cite[Theorem~4.3.3 and Proposition~6.4.1]{CF12}.  When $(\sE,\sF)$ is regular but non-symmetric,  the jumping measure $J$ of $(\sE,\sF)$ can be defined using the method outlined in \cite[(3.2.7)]{FOT11}; see also  \cite[Theorem~2.6]{M99}.  Furthermore,
\begin{equation}\label{eq:38}
	\sE(f,g)=-2\int_{E\times E\setminus d}f(x)g(y)J(dxdy)
\end{equation}
holds for any $f,g\in \sF\cap C_c(E)$ with disjoint support;  {see \cite[Theorem~2.9]{HMS06}}.   On the other hand,  following the argument in \cite[Lemma~5.3.2]{FOT11},  it can be verified that  \eqref{eq:38} also holds true with the canonical measure $\nu$ of $X$ in place of $J$.  Thus,  $\nu$ is also identical to $J$.  If $(\sE,\sF)$ is only quasi-regular,  a similar conclusion can be obtained by the transfer method using quasi-homeomorphism.  Particularly,  it holds that
\begin{equation}\label{eq:39}
	\int_{E\times E\setminus d}\left(\tilde{f}(x)-\tilde{f}(y) \right)^2\nu(dxdy)<\infty
\end{equation}
for any $f\in \sF$.
\end{remark}

A $\sigma$-finite positive measure $\mu$ on $E$ is called a \emph{smooth measure} (with respect to $\sE$) if it charges no $\sE$-polar sets and there exists an $\sE$-nest $\{K_n:n\geq 1\}$ consisting of compact sets such that $\mu(K_n)<\infty$ for all $n\geq 1$.  
Let $\sigma$ be a positive measure on $E\times E$,  and let $\sigma_l$ and $\sigma_r$ denote its left and right marginal measures on $E$, respectively.  Then, $\sigma$ is called a  \emph{bivariate smooth measure} (with respect to $\sE$) if 
\[
	\bar{\sigma}:=\frac{1}{2}\left(\sigma_l+\sigma_r\right)
\]
is smooth and $\sigma|_{E\times E\setminus d}\leq \nu$,  where $\nu$ is the canonical measure of $X$.  

When $M\in \text{MF}_+(X)$,   both $\rho_M$ and $\lambda_M$ are smooth measures with respect to $\sE$; see the proof of \cite[Theorem~3.4]{Y96}.  (This proof demonstrates the existence of two q.e. strictly positive and $\sE$-quasi-continuous functions $f$ and $g$ such that $\rho_M(f),\lambda_M(g)<\infty$.  Taking an $\sE$-nest consisting of compact sets $\{K_n\}$ such that $f,g\in C(\{K_n\})$ and $f,g>0$ on $\bigcup_{n\geq 1}K_n$, it can be readily verified  that $\rho_M(K_n),\lambda_M(K_n)<\infty$ for all $n\geq 1$.) According to \eqref{eq:46},  the bivariate Revuz measure $\nu_M$ is also bivariate smooth.  The converse also holds true; that is, the following statement is valid.

\begin{proposition}\label{PRO39}
The measure $\sigma$ on $E\times E$ is a bivariate smooth measure if and only if there exists $M\in \text{MF}_+(X)$ such that $\sigma=\nu_M$,  the bivariate Revuz measure of $M$.
\end{proposition}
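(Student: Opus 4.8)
The forward direction (each $\nu_M$ for $M\in\text{MF}_+(X)$ is bivariate smooth) has already been discussed just before the statement, so the substance of the proof is the converse: given a bivariate smooth measure $\sigma$ on $E\times E$, construct $M\in\text{MF}_+(X)$ with $\nu_M=\sigma$. The plan is to decompose $\sigma$ according to its behaviour off and on the diagonal $d$, mirroring the structure of the representation \eqref{eq:46}, and then build the corresponding MF factor by factor. Write $\sigma=\sigma^{\text{off}}+\sigma^{\text{diag}}$, where $\sigma^{\text{off}}:=\sigma|_{E\times E\setminus d}$ and $\sigma^{\text{diag}}$ is supported on $d$. By the definition of bivariate smoothness, $\sigma^{\text{off}}\leq\nu$, so $\sigma^{\text{off}}$ is absolutely continuous with respect to the canonical measure $\nu(dxdy)=N(x,dy)\rho_H(dy)$; let $\phi:=d\sigma^{\text{off}}/d\nu$, a $\sB(E\times E)$-measurable function with $0\leq\phi\leq 1$ vanishing on $d$. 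The diagonal part $\sigma^{\text{diag}}$ can be written as $\delta_y(dx)\,\kappa(dy)$ for a $\sigma$-finite measure $\kappa$ on $E$, and since $\bar\sigma=\tfrac12(\sigma_l+\sigma_r)$ is smooth, $\kappa$ is smooth (it is dominated by $2\bar\sigma$); hence by the Revuz correspondence there is a PCAF $B$ of $X$ with Revuz measure $\kappa$, which we may write as $B_t=\int_0^t a(X_s)\,dA_s$ for a fixed PCAF $A$ whose Revuz measure is everywhere positive and finite on a nest (e.g. obtained from a strictly positive quasi-continuous function) and a suitable $a\in\mathrm{p}\sB(E)$.

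Next I assemble the MF. Set
\[
	M_t:=\prod_{0<s\leq t}\bigl(1-\phi(X_{s-},X_s)\bigr)\,\exp\Bigl\{-\int_0^t a(X_s)\,dA_s\Bigr\}\,1_{[0,\zeta)}(t).
\]
The product is over the (countably many) jump times of $X$; because $0\leq\phi<1$ off $d$ and $\phi=0$ on $d$, each factor lies in $[0,1]$, and one checks that $t\mapsto M_t$ is right continuous, decreasing, $[0,1]$-valued, adapted, and satisfies the multiplicative identity $M_{t+s}(\omega)=M_t(\omega)M_s(\theta_t\omega)$ perfectly; moreover $M_\zeta=0$ by the factor $1_{[0,\zeta)}$, and every permanent point has $\bP_x(M_0=1)=1$, so $E_M=E$ and $M\in\text{MF}_+(X)$. (If one wants to allow a genuinely finite lifetime $S_M<\zeta$ contributed by $\sigma$ one can also insert a killing term $1_{[0,J_\Gamma)}$ with $\Gamma$ chosen so that $1_\Gamma\leq\phi$; but for the "if and only if" statement it suffices to produce some $M$ with $\nu_M=\sigma$, and the choice $\Gamma=\emptyset$ is the simplest.)

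Finally I must verify $\nu_M=\sigma$. Using the L\'evy system formula for $X$ and the exponential-plus-product structure of $M$, the same computation that yields \eqref{eq:46} in \cite[Theorem~4.6]{Y962} gives
\[
	\nu_M(dxdy)=\phi(x,y)\,\nu(dxdy)+\delta_y(dx)\,a(y)\,\rho_A(dy).
\]
The first term equals $\sigma^{\text{off}}$ by the choice of $\phi$, and the second term equals $\delta_y(dx)\,\kappa(dy)=\sigma^{\text{diag}}$ because $a(y)\rho_A(dy)$ is exactly the Revuz measure $\kappa$ of $B$. Hence $\nu_M=\sigma$. The main obstacle, and the place where care is genuinely needed, is the measurability and nest bookkeeping for the diagonal part: one must ensure that the smooth measure $\kappa$ really can be written as $a\,d\rho_A$ for a single reference PCAF $A$ (invoking the Radon–Nikodym-type comparison of smooth measures via their Revuz measures, using a common nest of compact sets on which both $\kappa$ and $\rho_A$ are finite) and that the resulting $a$ is $\sB(E)$-measurable and finite q.e.; granting the standard theory of smooth measures and PCAFs for quasi-regular Dirichlet forms summarised in the appendix, this is routine but is the only step with nontrivial content beyond quoting \eqref{eq:46}.
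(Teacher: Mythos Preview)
Your proposal takes a genuinely different route from the paper. The paper does not argue constructively: it simply cites \cite[Theorem~4.3]{Y96}, whose proof (as the parenthetical remark signals) passes through the strong-subordination theorem, i.e.\ Theorem~\ref{THM52} of the present article. That argument runs: given a bivariate smooth $\sigma$, form the perturbed quadratic form $\sE(u,v)+\sigma(\tilde u\otimes\tilde v)$ on $\sF\cap L^2(E,\bar\sigma)$, verify it is a quasi-regular Dirichlet form strongly subordinate to $(\sE,\sF)$, invoke Theorem~\ref{THM52} to extract $M\in\text{MF}_+(X)$, and finally identify $\nu_M=\sigma$ by matching Dirichlet forms through Theorem~\ref{THM43}. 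Your approach instead reverse-engineers the structural formula \eqref{eq:410} directly and reads off \eqref{eq:46}; this is more explicit and sidesteps the analytic detour through subordination, at the price of having to check by hand that the candidate $M$ really is an MF.

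That price is where your write-up has one genuine oversight and one misplaced worry. The oversight: you claim $0\le\phi<1$, but only $0\le\phi\le1$ follows from $\sigma^{\text{off}}\le\nu$, so $\Gamma:=\{\phi=1\}$ can have positive $\nu$-mass and cannot simply be discarded. More substantively, the sentence ``one checks that $t\mapsto M_t$ is right continuous'' is hiding the real work: you must argue that the infinite product is right continuous (equivalently that $\sum_{0<s\le t}\phi(X_{s-},X_s)<\infty$ on $\{t<J_\Gamma\}$) and that $J_\Gamma>0$ a.s., and it is precisely the smoothness of $\bar\sigma$ that one needs here. Without this, the displayed $M$ is not obviously an MF at all, let alone one with $E_M=E$. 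The misplaced worry: writing $\kappa=a\,\rho_A$ is trivial---take $A$ to be the PCAF with Revuz measure $\kappa$ (it exists by the standard Revuz correspondence for smooth measures) and set $a\equiv1$. So the ``only step with nontrivial content'' is not the diagonal bookkeeping you flag at the end, but the right-continuity and $E_M=E$ verification you brushed past.
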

\begin{proof}
See \cite[Theorem~4.3]{Y96}.  (The proof relies on the conclusion of \cite[Theorem3.5]{Y96}, which necessitates certain improvements in its own proof, as demonstrated in Theorem~\ref{THM52} of this article.)
\end{proof}
\begin{remark}\label{RM311}
It should be noted that if $(\sE,\sF)$ is local (see \cite[V, Definition~1.1]{MR92}),  meaning that $\nu=0$,  then every bivariate smooth/Revuz measure vanishes off the diagonal $d$. In this case,  killing by $M\in\text{MF}_+(X)$ reduces to perturbation by continuous additive functionals,  as examined in,  e.g.,  \cite[IV, \S4c]{MR92}.  In this context, we have 
\[
	\sF^M=\sF\cap L^2(E,\bar{\nu}_M),\quad \sE^M(f,g)=\sE(f,g)+\int_E \tilde{f}\tilde{g}d\bar{\nu}_M,\;f,g\in \sF^M,
\]
 where $\bar{\nu}_M:=\frac{1}{2}\left(\rho_M+\lambda_M \right)$.  
\end{remark}

\subsection{Killing by general MF}

Let us turn to consider the killing transformation induced by a general $M\in \text{MF}(X)$,  and we denote $G:=E_M$ for convenience. 

In light of Lemma~\ref{LM36} and Remark~\ref{RM37}~(2),  the killing transformation induced by $M \in \text{MF}(X)$ can be interpreted as first applying a killing by $D_{E\setminus G}$ to obtain the part process $X^{G}$,  as discussed in Section~\ref{SEC42}.  This is followed by a second killing on $X^{G}$ at the terminal time $S_M$ ($>0$,  a.s.),  and finally a killing transformation using $M\in \text{MF}_{++}(X^{G,S_M})$, where $X^{G,S_M}$ represents the subprocss of $X^{G}$ that is killed by $S_M$.  Note that  $X^G$ is properly associated with the part Dirichlet form $(\sE^G,\sF^G)$,  though it is not necessarily a Borel right process.  However,  the representation of the bivariate Revuz measure presented in \eqref{eq:46} depends on the assumption that the process is a Borel right process. Therefore, we cannot directly translate the characterization of general MF into the characterization of MF in  $\text{MF}_+(X^G)$  as described in \eqref{eq:46}.  This issue was inadvertently overlooked in \cite{Y96}, and we will address it in this subsection to rectify the oversight.  

\begin{lemma}\label{LM45}
Let $M\in \text{MF}(X)$ and $G=E_M$.  The marginal measures $\rho_M$ and $\lambda_M$ do not charge either $\sE$-polar sets or {$\sE^G$-polar subsets of $G$}.
\end{lemma}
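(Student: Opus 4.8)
The plan is to use the elementary fact that an $\sE$-polar set is avoided by the sample paths of $X$ — including by their left limits — so that the raw additive functional of $X^M$ that governs the relevant part of $\nu_M$ vanishes identically.

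First, some reductions. Since $\rho_M$ and $\lambda_M$ are the marginals of $\nu_M$, we have $\rho_M(N_0)=\nu_M(E\times N_0)$ and $\lambda_M(N_0)=\nu_M(N_0\times E)$ for every $N_0\in\mathrm{p}\sB(E)$; hence it suffices to show $\nu_M\big((N_0\times E)\cup(E\times N_0)\big)=0$ whenever $N_0$ is $\sE$-polar. If $N_0$ is $\sE^G$-polar, then $N_0\subset G$ and, by Corollary~\ref{COR42}, $N_0$ is $\sE$-polar, so the $\sE^G$-polar case is subsumed. Enlarging $N_0$ if necessary, we may assume $N_0$ is nearly Borel.

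The key step is: $\bP_m$-a.s.\ one has $X_s\notin N_0$ and $X_{s-}\notin N_0$ for all $s\ge 0$ (with the convention $X_{0-}:=X_0$). Since $X$ is properly associated with the quasi-regular form $(\sE,\sF)$, the $\sE$-polar set $N_0$ is $m$-exceptional; consequently $m(N_0)=0$ and $\bP_m(\exists s>0:X_s\in N_0)=0$, so that $\bP_m(\exists s\ge 0:X_s\in N_0)=0$. For the left limits, at any time of continuity $X_{s-}=X_s\notin N_0$, while the (countably many) jump times are controlled by the Lévy system $(N,H)$ of $X$ applied to the function $(x,y)\mapsto 1_{N_0}(x)1_{\{x\ne y\}}$, which vanishes on the diagonal:
\[
\bP_m\Big[\sum_{0<s\le t}1_{N_0}(X_{s-})1_{\{X_{s-}\ne X_s\}}\Big]
=\bP_m\Big[\int_0^t 1_{N_0}(X_s)\,N(X_s,E)\,dH_s\Big]=0,
\]
because $1_{N_0}(X_s)=0$ for $dH_s$-a.e.\ $s$. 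Letting $t\uparrow\infty$ shows that $\bP_m$-a.s.\ no jump of $X$ has its left endpoint in $N_0$ either.

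Finally, the conclusion. By the previous step the integrand $1_{(N_0\times E)\cup(E\times N_0)}(X_{s-},X_s)$ vanishes for every $s$, $\bP_m$-a.s.; hence the nonnegative raw additive functional $t\mapsto\int_0^t 1_{(N_0\times E)\cup(E\times N_0)}(X_{s-},X_s)\,d(-M_s)$ of $X^M$ is $\bP_m$-a.s.\ identically $0$, so its Revuz-type limit in \eqref{eq:44} is $0$. Thus $\nu_M\big((N_0\times E)\cup(E\times N_0)\big)=0$, and therefore $\rho_M(N_0)=\lambda_M(N_0)=0$. The only delicate point is the treatment of the left limits $X_{s-}$: that $X_s$ avoids a polar set is immediate, but excluding left limits in $N_0$ requires the Lévy system (equivalently, quasi-left-continuity of $X$); note that this step is entirely independent of the particular $M\in\text{MF}(X)$. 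We also remark that $\rho_M$ and $\lambda_M$ need not be $\sigma$-finite for a general $M\in\text{MF}(X)$, but this is harmless here since we establish that the relevant masses are $0$ directly.
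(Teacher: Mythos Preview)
Your argument is correct and follows essentially the same line as the paper's proof. Both reduce the $\sE^G$-polar case to the $\sE$-polar case via Corollary~\ref{COR42}, enlarge $N_0$ to a set avoided by the paths under $\bP_m$, and then observe that the integrand $F(X_{s-},X_s)$ in \eqref{eq:44} vanishes identically. The only difference is in how the left-limit part is handled: the paper quotes the ``left $m$-polarity'' result \cite[(11.1)]{GS84} to conclude $\bP_m(S_{N_0}<\infty)=0$ directly, whereas you re-derive this fact via the L\'evy system identity. Your route is slightly more self-contained (it avoids the external citation) but uses exactly the machinery the paper already has at hand in \S\ref{SEC43}; the paper's route is shorter since the cited lemma packages the L\'evy-system computation for you. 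Either way the substance is the same.
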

\begin{proof}
By {Corollary~\ref{COR33}~(1)} and Lemma~\ref{LM32}~(1),  it suffices to demonstrate that $\rho_M(N)=\lambda_M(N)=0$ for any $m$-inessential Borel set $N\subset E$ for $X$.  For any $t>0$,   we have
\[
\bP_m\int_{s\in [0,t]\cap [0,\zeta)}1_{N}(X_s)d(-M_s)\leq \bP_m\left(X_s\in N, \exists s\in [0,\zeta)\right)=\bP_m(T_N<\infty)=0.
\]
Thus,  we conclude that $\rho_M(N)=0$.  Additionally,  according to \cite[(11.1)]{GS84},  $N$ is also \emph{left $m$-polar} in the sense that $\bP_m(S_N<\infty)=0$,  where $S_N:=\inf\{0<t<\zeta: X_{t-}\in N\}$.  Consequently, a similar argument establishes that $\lambda_M(N)=0$. 
\end{proof}
\begin{remark}
For $M\in \text{MF}(X)$,  the restriction of $\nu_M$ to $G\times G$ is clearly a bivariate smooth measure with respect to $\sE^{G}$.  (The canonical measure of $X^{G}$ is  $1_{G\times G}\cdot \nu$; see \cite[I, Theorem~3.9]{Y962}.) According to Proposition~\ref{PRO39},  this measure serves as the bivariate Revuz measure of an MF in $\text{MF}_+(X^{G})$.  By Lemma~\ref{LM36},  this MF corresponds precisely to $M$ itself.  
\end{remark}

We can now present the general form of Theorem~\ref{THM43}.  This result was originally examined in \cite[Theorem 4.1]{Y962};  however, its proof lacks rigor.  It is noteworthy that $\rho_M(E\setminus E_M)=\lambda_M(E\setminus E_M)=0$,  since $S_M\leq D_{E\setminus E_M}$.


\begin{theorem}\label{THM33}
Let $M\in \text{MF}(X)$ and $m^*:=m|_{E_M}$.   Then, the subprocess $X^M$ is a nearly $m^*$-symmetric right process on $E_M$, which is properly associated with the Dirichlet form $(\sE^M,\sF^M)$ {on $L^2(E_M, m^*)$} given by
\begin{equation}\label{eq:34}
	\begin{aligned}
		&\sF^M=\sF^{E_M}\cap L^2(E_M, \rho_M+\lambda_M), \\
		&\sE^M(f,g)=\sE(f,g)+\nu_M(\tilde{f}\otimes \tilde{g}),\quad f,g\in \sF^M,
	\end{aligned}
\end{equation}
where $\sF^{E_M}$ is defined as in \eqref{eq:31} with $G=E_M$.
 Furthermore,  $X^M$ is $m^*$-tight,  and $(\sE^M,\sF^M)$ is quasi-regular on $L^2(E_M,m^*)$.  
\end{theorem}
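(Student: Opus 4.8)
The plan is to reduce the assertion to the case $M\in\text{MF}_{++}$ already settled by Theorem~\ref{THM43}, via two successive reductions. Put $G:=E_M$. By Lemma~\ref{LM36}, $M\in\text{MF}_+(X^G)$ and the subprocess $X^M=(X,M)$ coincides with $(X^G,M)$; by Lemma~\ref{LM42}, $X^G$ is $m|_G$-tight and properly associated with the quasi-regular Dirichlet form $(\sE^G,\sF^G)$ on $L^2(G,m|_G)$, and $\sF^G=\sF^{E_M}$ by \eqref{eq:31}. Since $S_M\le D_{E\setminus G}$, the measures $\rho_M,\lambda_M$ (hence $\nu_M$) are carried by $G$, and by Lemma~\ref{LM45} together with Corollary~\ref{COR42} they charge no $\sE^G$-polar set. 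So it suffices to identify the Dirichlet form of the subprocess of $X^G$ by $M\in\text{MF}_+(X^G)$ and to verify $m|_G$-tightness and quasi-regularity.

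The delicate point --- precisely the one overlooked in \cite{Y96,Y962} --- is that $X^G$ need not be a Borel right process, so neither Theorem~\ref{THM43} nor the representation \eqref{eq:46} may be invoked for $X^G$ directly. To repair this I would, as in \cite[Corollary~3.23]{F01} (cf.\ Lemma~\ref{LM32}~(4) and the passage following Lemma~\ref{LM45}), choose an $m|_G$-inessential Borel set $N\subset G$ --- which is $\sE^G$-polar, hence $\sE$-polar by Corollary~\ref{COR42} --- such that $X^{G,N}:=X^G|_{G\setminus N}$ is a Borel (special standard) right process properly associated with $(\sE^G,\sF^G)$, under the identification $L^2(G\setminus N,m|_{G\setminus N})=L^2(G,m|_G)$. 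Then $M$ restricts to an element of $\text{MF}_+(X^{G,N})$ whose permanent-point set is $G\setminus N$ and whose bivariate Revuz measure and marginals still equal $\nu_M,\rho_M,\lambda_M$, because by Lemma~\ref{LM45} none of these charges the $m$-null set $N$; and $(X^{G,N},M)=X^M$ off $N$, so the two are properly associated with the same Dirichlet form, and $X^M$ is nearly $m^*$-symmetric precisely when $(X^{G,N},M)$ is.

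Now I work on the Borel right process $X^{G,N}$. Applying the decomposition \eqref{eq:410} to $M$ and normalising $\Phi$ to vanish on $\Gamma$ (which changes $M$ only on $\{t\ge J_\Gamma\}$, where it already vanishes), I write $M_t=M^c_t\cdot 1_{[0,T)}(t)$ with $T:=S_M=J_\Gamma\wedge\zeta^{X^G}$ an exact terminal time and $M^c_t:=\prod_{0<s\le t}(1-\Phi(X_{s-},X_s))\exp(-\int_0^t a(X_s)\,dA_s)$, which is strictly positive on $[0,T)$. Killing $X^{G,N}$ at the exact terminal time $T$ produces a Borel right process $Y:=(X^{G,N})^T$ --- exactness is exactly what lets this killing preserve the Borel right property, unlike the entrance time $D_{E\setminus G}$ of the first step --- on which $M^c\in\text{MF}_{++}(Y)$ and $X^M=Y^{M^c}$. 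By Theorem~\ref{THM43} the Dirichlet form of $X^M$ is $\sF^Y\cap L^2(G,\rho_{M^c}+\lambda_{M^c})$ with form $\sE^Y(f,g)+\nu_{M^c}(\tilde f\otimes\tilde g)$, where $(\sE^Y,\sF^Y)$ is the Dirichlet form of the hard-jump killing $Y$ and the Revuz measures of $M^c$ relative to $Y$ agree with $\rho_{M^c},\lambda_{M^c},\nu_{M^c}$ (the $\Gamma$-jump being sent to the cemetery under $Y$, hence invisible to its Revuz measures). It then remains to identify $(\sE^Y,\sF^Y)$ as $\sF^Y=\sF^G\cap L^2(G,\rho_{N_\Gamma}+\lambda_{N_\Gamma})$ and $\sE^Y(f,g)=\sE^G(f,g)+\nu_{N_\Gamma}(\tilde f\otimes\tilde g)$, where $N_\Gamma:=1_{[0,J_\Gamma\wedge\zeta)}$ and $\nu_{N_\Gamma}=1_\Gamma\cdot\nu|_{G\times G}$ --- the $J_\Gamma$-analogue of the description of $X^{T_B}$ in \cite[Theorem~5.10]{F01}, proved by the same L\'evy-system argument. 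Plugging this in and using $\nu_M=\nu_{M^c}+\nu_{N_\Gamma}$ (valid once $\Phi|_\Gamma=0$), hence $\rho_M=\rho_{M^c}+\rho_{N_\Gamma}$ and $\lambda_M=\lambda_{M^c}+\lambda_{N_\Gamma}$, together with $\sE^G=\sE$ on $\sF^G$ and $L^2(\mu_1)\cap L^2(\mu_2)=L^2(\mu_1+\mu_2)$, collapses the description to exactly \eqref{eq:34}. Finally $m^*$-tightness of $X^M$ is inherited from $X^{G,N}$ through the two killings by the argument of Lemma~\ref{LM42}, and quasi-regularity of $(\sE^M,\sF^M)$ follows from it as in \cite[Theorem~3.22]{F01}.

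The hardest part is the second step: after cutting the part process down to the complement of an $m$-inessential set so as to make it a genuine Borel right process, one must check that the multiplicative functional, its permanent-point set, and all of its (bivariate) Revuz measures survive the cut intact --- which is what forces the use of Lemma~\ref{LM45} and Corollary~\ref{COR42}, and is exactly the gap left in \cite{Y96,Y962}. A secondary, more routine, difficulty is the $\text{MF}_+\to\text{MF}_{++}$ passage of the third step, resting on the fact that killing at the exact terminal time $J_\Gamma\wedge\zeta$ is as well behaved as the part construction for a hitting time, so that the Dirichlet form of the intermediate hard-jump killing is pinned down.
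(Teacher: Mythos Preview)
Your reduction scheme --- pass to the part process on $G=E_M$, cut down to a Borel right restriction, decompose $M$ via \eqref{eq:410}, peel off the hard-jump killing at $J_\Gamma$, and apply Theorem~\ref{THM43} to the remaining $\text{MF}_{++}$ piece --- is exactly the paper's strategy, and your diagnosis that the crux is tracking $\nu_M,\rho_M,\lambda_M$ across the restriction via Lemma~\ref{LM45} and Corollary~\ref{COR42} is correct.

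There is one genuine gap. You assert that killing the Borel right process $X^{G,N}$ at the exact terminal time $T=J_\Gamma\wedge\zeta$ yields a \emph{Borel} right process $Y$, saying ``exactness is exactly what lets this killing preserve the Borel right property''. Exactness of $1_{[0,T)}$ guarantees that the subprocess is a right process with permanent-point set all of $G\setminus N$ (see \cite[(61.5)]{S88} and \cite[(56.10)]{S88}), but the transition semigroup $\bP_x[f(X_t);t<J_\Gamma]$ is a priori only in $\sB^*$, not $\sB$; so Theorem~\ref{THM43}, which lives in the Borel right framework of \S\ref{SEC31}, cannot be applied to $Y$ as it stands. The paper avoids this by ordering the steps differently in the $\text{MF}_+$ stage: it first kills at $J_\Gamma$ and invokes \cite[Theorem~4.3]{Y962} (your ``$J_\Gamma$-analogue of \cite[Theorem~5.10]{F01}'') to obtain the Dirichlet form of $X^\Gamma$ directly, \emph{then} uses \cite[Corollary~3.23]{F01} to restrict $X^\Gamma$ to a Borel right $X^1$, on which $M\in\text{MF}_{++}(X^1)$ and Theorem~\ref{THM43} applies. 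In other words, for the general $M\in\text{MF}(X)$ the paper performs \emph{two} restrictions --- once after the part construction and once after the $J_\Gamma$-killing --- each followed by an appeal to Lemma~\ref{LM45} to see that the Revuz measures are unaffected. Your single restriction before the $J_\Gamma$-killing does not suffice; the fix is to insert a second one after it.

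A smaller difference: the paper proves $m^*$-tightness of $X^M$ in one line from the killing-operator formula \eqref{eq:33}, by observing that the event $\{\lim_n T_{E\setminus K_n}<\zeta\}$ can only shrink under $k_t$. Your inheritance-through-killings approach should also work but is less direct.
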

\begin{proof}
Note that $X^M$ is a right process on the Radon space $(E_M,\sB^*(E_M))$.  
We assert that $X$ is $m^*$-tight; consequently, the quasi-regularity of its associated Dirichlet form follows from,  e.g.,  \cite[Theorem~3.22]{F01} (provided that the sector condition is further verified).    In light of Lemma~\ref{LM42},  we need to consider the case where $M\in \text{MF}_+(X)$, where $E_M=E$ and $m^*=m$.
Let $\{K_n:n\geq 1\}$ be an increasing sequence of compact sets as specified in \eqref{eq:32}.  Clearly, for any $t\geq 0$,  we have
\[
	\left\{\omega\in \Omega: \lim_{n\rightarrow \infty}T_{E\setminus K_n}(k_t\omega)<\zeta(k_t\omega)\right\}\subset \left\{\omega\in \Omega: \lim_{n\rightarrow \infty}T_{E\setminus K_n}(\omega)<\zeta(\omega)\right\}.
\]
Therefore,  it follows from \eqref{eq:33} that
\[
\begin{aligned}
	\bQ_m \left(\lim_{n\rightarrow \infty}T_{E\setminus K_n}<\zeta \right)&=\bP_m \int_{(0,\infty]}1_{\left\{\lim_{n\rightarrow \infty}T_{E\setminus K_n}<\zeta\right\}}\circ k_t d(-M_t) \\
	&\leq \bP_m \left(\lim_{n\rightarrow \infty}T_{E\setminus K_n}<\zeta \right)=0.
\end{aligned}\]
Note that \eqref{eq:33} can be applied to this event because it remains valid for all $Z\in \mathrm{b}\cF^*$,  where  $\cF^*$ is the universal completion of $\cF^0$ (see \cite[Page 286]{S88}).  This establishes the $m^*$-tightness of $X^M$.

It remains to demonstrate that $X^M$ is nearly $m^*$-symmetric and to obtain the expression \eqref{eq:34} for its associated Dirichlet form $(\sE^M,\sF^M)$.  

Consider first that $M\in \text{MF}_+(X)$,  which allows for the decomposition given in \eqref{eq:410}.  Let $X^{\Gamma}$ be the subprocess of $X$ that is killed by the terminal time $J_{\Gamma}$ (namely, killed by the MF $1_{[0,J_\Gamma)}(t)$).  As demonstrated in \cite[Theorem~4.3]{Y962},  $X^\Gamma$ is $m$-nearly symmetric on $E$,  and \eqref{eq:34} holds with the bivariate Revuz measure given by
\begin{equation}\label{eq:412}
	\nu_\Gamma(F)=\lim_{t\downarrow 0}\frac{1}{t}\bP_m\left( F(X_{J_\Gamma-},X_{J_\Gamma}); J_\Gamma\leq t\right),\quad F\in \mathrm{p}\sB(E\times E).
\end{equation}
By applying \cite[Corollary~3.23]{F01} to $X^\Gamma$,  we can select an $m$-inessential Borel set $N$ for $X^\Gamma$ such that the restricted Borel right process $X^1:=X^\Gamma|_{E\setminus N}$ is associated with the same Dirichlet form as that of $X^\Gamma$.  Notably,  $N$ is $\sE$-polar by \cite[Corollary~3.3]{Y96}.  Consequently, it follows from Lemma~\ref{LM45} that 
\begin{equation}\label{eq:411}
\rho_\Gamma(N)=\lambda_\Gamma(N)=\rho_M(N)=\lambda_M(N)=0,
\end{equation}
where $\rho_\Gamma$ and $\lambda_\Gamma$ represent the right and left marginal measures of $\nu_\Gamma$, respectively.  {Write $E_1:=E\setminus N$ for convenience.}
According to Remark~\ref{RM37}~(2),  $M\in \text{MF}_{++}(X^\Gamma)$,  which clearly indicates that $M\in \text{MF}_{++}(X^1)$.   We then have
\[
	Q^1_t(f|_{E_1})(x):=Q_tf(x)=\bP_x \left(f(X_t)M_t\right)=\bP_x\left(f(X^1_t)M_t\right),\quad f\in \mathrm{p}\sB(E), x\in E_1.
\]
By applying Theorem~\ref{THM43} to $X^1$ and $M$,  we can conclude that $Q^1_t$ (and hence $Q_t$) acts on $L^2(E,m)$ as a strongly continuous contraction semigroup whose infinitesimal generator satisfies the sector condition.  The Dirichlet form of the subprocess $(X^1,M)$ is evidently the same as that of $X^M$.  By Theorem~\ref{THM43},  it is given by
\[
\begin{aligned}
	&\sF^M=\sF\cap L^2(E,\rho_\Gamma+\lambda_\Gamma+\rho_{M}^\Gamma+ \lambda_{M}^\Gamma),\\
	&\sE^M(f,g)=\sE(f,g)+\nu_\Gamma(\tilde{f}\otimes \tilde{g})+\nu^\Gamma_{M}(\tilde{f}\otimes \tilde{g}),\quad f,g\in \sF,
\end{aligned}\]
where $\nu^\Gamma_{M}$ is the bivariate Revuz measure of $M$ with respect to $X^1$.  To finalize \eqref{eq:34},  it remains to establish
\begin{equation}\label{eq:413}
	\nu_M(F)=\nu_\Gamma(F)+\nu^\Gamma_{M}(F),\quad F\in \mathrm{p}\sB(E_1\times E_1). 
\end{equation}
(Any function in $\mathrm{p}\sB(E_1\times E_1)$ can be treated as a function in $\mathrm{p}\sB(E\times E)$ through zero extension, as we have shown \eqref{eq:411}.) In fact,  we have
\[
\begin{aligned}
	\nu^\Gamma_{M}(F)&=\lim_{t\downarrow 0}\frac{1}{t}\bP_m \int_0^t F(X^1_{s-},X^1_s)d(-M_s) \\
	&=\lim_{t\downarrow 0}\frac{1}{t}\bP_m \int_{s\in [0,t]\cap (0,\zeta\wedge J_\Gamma)} F(X_{s-},X_s)d(-M_s).
\end{aligned}\]
From \eqref{eq:412},  we can deduce that
\[
	\nu^\Gamma_{M}(F)+\nu_\Gamma(F)=\lim_{t\downarrow 0}\frac{1}{t}\bP_m \int_{s\in [0,t]\cap (0,\zeta)} F(X_{s-},X_s)d(-M_s)=\nu_M(F).
\]
Therefore,  \eqref{eq:413} is established. 

Finally,  we consider the case where $M\in \text{MF}(X)$.  Let $G=E_M$,  which is finely open with respect to $X$, and let $B:=E\setminus G$.  {As noted in the proof of Lemma~\ref{LM32} (4), one may choose an $m$-inessential Borel set $N_0$ such that $G\setminus N_0 \in \sB(E\setminus N_0)$ is finely open with respect to the restricted process $X|_{E\setminus N_0}$. Furthermore, by \cite[Corollary 3.23]{F01}, we may further ensure that $X|_{E\setminus N_0}$ is a special Borel standard process. Passing to $E\setminus N_0$ entails no loss of generality, and none of the subsequent arguments or conclusions are affected by this reduction. Thus},  we may assume that $X$ is a special,  Borel standard process on $E$ and $G\in \sB(E)$.  Let $X^G$ represent the part process of $X$ on $G$ associated with the part Dirichlet form $(\sE^G,\sF^G)$,  and let $N$ be an $m^*$-inessential Borel set $N\subset G$ such that the restricted process $X^1:=X^G|_{G\setminus N}$ is a Borel standard process on $G\setminus N$.  It follows from Lemma~\ref{LM45} that
\begin{equation}\label{eq:414}
	\rho_M(N)=\lambda_M(N)=0.
\end{equation}
Note that $M\in \text{MF}_+(X^G)$ by Lemma~\ref{LM36},  whose restriction to $G\setminus N$ is an MF in $\text{MF}_+(X^1)$.  The transition semigroup of the subprocess $(X^1,M)$ is given by
\[
	Q^1_t (f|_{G\setminus N})(x):=\bP_x\left(f(X^1_t)M_t \right)=\bP_x\left(f(X_t)M_t \right)=Q_tf(x)
\]
for $x\in G\setminus N$ and $f\in \sB(G)$.  Based on our examination in the previous case,  we conclude that $Q^1_t$ (and hence $Q_t$) acts on $L^2(G,m^*)$ as a strongly continuous contraction semigroup, whose infinitesimal generator satisfies the sector condition.  The Dirichlet form of $X^M$ is identical to that of $(X^1,M)$,  which has the following form:
\[
\begin{aligned}
	&\sF=\sF^G\cap L^2(G\setminus N, \rho^1_{M}+\lambda^1_{M}),  \\
	&\sE(f,g)=\sE^G(f,g)+\nu^1_{M}(\tilde{f}\otimes \tilde{g}),\quad f,g\in \sF,
\end{aligned}\]
where $\nu^1_{M}$ is the bivariate Revuz measure of $M$ with respect to $X^1$,  and $\rho^1_{M}$ and $\lambda^1_{M}$ represent the right and left marginal measures of $\nu^1_{M}$, respectively.  Given \eqref{eq:414}, it remains to show that
\[
\nu_M(F)=\nu^1_{M}(F),\quad F\in\mathrm{p}\sB((G\setminus N)\times (G\setminus N)),
\]
where $F$ can be regarded as a function on $E\times E$ via zero extension.
In fact,  since $X_{D_B}\in B$ whenever $D_B<\infty$ and $S_M\leq D_B$,  we have
\[
\begin{aligned}
	\nu^1_{M}(F)&=\lim_{t\downarrow 0}\frac{1}{t}\bP_{m^*} \int_0^t F(X^1_{s-},X^1_s)d(-M_s) \\
	&=\lim_{t\downarrow 0}\frac{1}{t}\bP_{m^*} \int_{s\in [0,t]\cap (0,D_B\wedge \zeta)} F(X_{s-},X_s)d(-M_s) \\
	&=\lim_{t\downarrow 0}\frac{1}{t}\bP_{m} \int_{s\in [0,t]\cap (0,D_B]\cap (0, \zeta)} F(X_{s-},X_s)d(-M_s) \\
	&=\nu_M(F).
\end{aligned}\]
This completes the proof.
\end{proof}
\begin{remark}\label{RM49}
As explained before {Corollary~\ref{COR33}~(1)}, the $\sE^G$-quasi-notion can be inherited from the $\sE$-quasi-notion. We will now demonstrate that the $\sE^M$-quasi-notion and the $\sE^G$-quasi-notion are equivalent. Without loss of generality, assume that $M \in \text{MF}_+(X)$.  Let
\[
	\sF^1:=\sF\cap L^2(E,\rho_M+\lambda_M),\quad \sE^1(f,g):=\sE(f,g)+\int_E \tilde{f}\tilde{g}d(\rho_M+\lambda_M).
\]
As shown in \cite[Lemma~5.1]{CF12},  $(\sE^1,\sF^1)$ is a Dirichlet form on $L^2(E,m)$.  (Recall that $\rho_M+\lambda_M$ charges no $\sE$-polar sets by Lemma~\ref{LM45}.)  According to \cite[Theorem~5.1.4]{CF12},  the $\sE^1$-nest is equivalent to the $\sE$-nest.  (Although only symmetric Dirichlet forms are considered in \cite{CF12}, the $\sE$-quasi-notion pertains solely to the symmetric part of $\sE$, thereby making the relevant discussions applicable to non-symmetric Dirichlet forms as well.  See also \cite[Proposition~2.3]{RS95} for the examination of non-symmetric case.) Note that
\[
	\sF^M=\sF^1, \quad  \sE^M(f,f)\leq \sE^1(f,f),\; \forall f\in \sF^M.
\]
Thus, any $\sE^1$-nest is also an $\sE^M$-nest.  Conversely, it can be easily verified that $(\sE^M,\sF^M)$ is strongly subordinate to $(\sE,\sF)$. (To demonstrate the denseness of $\sF^M$ in $\sF$,  it suffices to consider the $\sE$-nest $\{K_n\}$ consisting of compact sets for which $\rho_M(K_n)+\lambda_M(K_n)<\infty$ for each $n\geq 1$, and note that $\bigcup_{n\geq 1}\mathrm{b}\sF^M_{K_n}=\bigcup_{n\geq 1}\mathrm{b}\sF_{K_n}$ is $\sE_1$-dense in $\sF$.) By \cite[Corollary~3.3]{Y96},  any $\sE^M$-nest is also an $\sE$-nest.  Therefore,  the $\sE^M$-nest is equivalent to the $\sE$-nest. In particular,  $\sE^M$-quasi-continuity (resp.  $\sE^M$-polar set) is equivalent to $\sE$-quasi-continuity (resp. $\sE$-polar set).   
\end{remark}

In the special case where $(\sE,\sF)$ is symmetric,  it is evident that $(\sE^M,\sF^M)$ is symmetric if and only if $\nu_M$ is symmetric,  i.e., $\nu_M(dxdy)=\nu_M(dydx)$.  For further exploration of the symmetric case, see \cite{Y98}.

\subsection{Killing is domination}

From Theorem~\ref{THM33},  we can readily derive the following fact.

\begin{corollary}\label{COR34}
Let $M\in \text{MF}(X)$ such that $m(E\setminus E_M)=0$.  Then, the Dirichlet form $(\sE^M,\sF^M)$ associated with the subprocess $X^M$ is dominated by $(\sE,\sF)$.  Furthermore,  when considered as a Dirichlet form on $L^2(E,m)$,  $(\sE^M,\sF^M)$ remains quasi-regular. 
\end{corollary}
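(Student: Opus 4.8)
The plan is to read off both assertions from Theorem~\ref{THM33}. Since $m(E\setminus E_M)=0$, the measure $m^\ast=m|_{E_M}$ and $m$ determine the same space $L^2(E_M,m^\ast)=L^2(E,m)$, so the coercive closed Markovian form $(\sE^M,\sF^M)$ produced by Theorem~\ref{THM33} is literally a closed form on $L^2(E,m)$; its domain is $\|\cdot\|_2$-dense there because $(\sE^M,\sF^M)$ is a (quasi-regular) Dirichlet form on $L^2(E_M,m^\ast)$. Thus $(\sE^M,\sF^M)$ is a Dirichlet form on $L^2(E,m)$, and it remains to establish $\sE^M\preceq\sE$ and quasi-regularity relative to the topological space $E$.

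For the domination I would verify condition (3) of Lemma~\ref{LM21}. The inclusion $\sF^M\subset\sF$ is immediate from $\sF^M=\sF^{E_M}\cap L^2(E_M,\rho_M+\lambda_M)\subset\sF^{E_M}\subset\sF$ in \eqref{eq:34}. Inequality \eqref{eq:21} is read off the representation in \eqref{eq:34}: for non-negative $f,g\in\sF^M$ the quasi-continuous versions satisfy $\tilde f,\tilde g\ge 0$ $\sE$-q.e., hence $\tilde f\otimes\tilde g\ge 0$ off an $\sE$-polar set $N$, which is $\nu_M$-negligible since $\nu_M(N\times E)=\lambda_M(N)$ and $\nu_M(E\times N)=\rho_M(N)$ vanish by Lemma~\ref{LM45}; therefore $\sE^M(f,g)=\sE(f,g)+\nu_M(\tilde f\otimes\tilde g)\ge\sE(f,g)$. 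For the order-ideal property, let $f\in\sF$ and $g\in\sF^M$ with $|f|\le|g|$; then $|\tilde f|\le|\tilde g|$ $\sE$-q.e., so $\tilde f=0$ $\sE$-q.e. on $E\setminus E_M$ (as $\tilde g$ is), giving $f\in\sF^{E_M}$, and since $\rho_M+\lambda_M$ charges no $\sE$-polar set, $\int\tilde f^2\,d(\rho_M+\lambda_M)\le\int\tilde g^2\,d(\rho_M+\lambda_M)<\infty$, so $f\in L^2(E_M,\rho_M+\lambda_M)$ and hence $f\in\sF^M$. Lemma~\ref{LM21} then yields $\sE^M\preceq\sE$.

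For quasi-regularity on $L^2(E,m)$ I would pass through the probabilistic side. By Theorem~\ref{THM33} the subprocess $X^M$ is an $m^\ast$-tight right process on $E_M$ properly associated with $(\sE^M,\sF^M)$; since $S_M\le D_{E\setminus E_M}$ it never visits $E\setminus E_M$, so adjoining the $m$-negligible set $E\setminus E_M$ to its state space changes neither the $L^2(E,m)=L^2(E_M,m^\ast)$-resolvent nor the proper association, and a compact $\sE^M$-nest inside $E_M$ (whose members are compact in $E$ as well) remains a tightness nest for $X^M$ regarded on $E$. Hence $X^M$ is an $m$-tight right process on $E$ properly associated with $(\sE^M,\sF^M)$, and \cite[Theorem~3.22]{F01} gives quasi-regularity relative to $E$. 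I expect the only delicate point to be this last transfer — making precise that enlarging the state space by the $m$-negligible, never-visited set $E\setminus E_M$ is harmless, equivalently that $E\setminus E_M$ is $\sE^M$-exceptional because every element of $\sF^M\subset\sF^{E_M}$ vanishes q.e. on it; everything else is a direct application of Theorem~\ref{THM33}, Lemma~\ref{LM21} and Lemma~\ref{LM45}.
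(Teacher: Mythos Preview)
Your proposal is correct and follows essentially the same route as the paper: domination is obtained by verifying Ouhabaz's criterion from the explicit representation \eqref{eq:34} (the paper invokes condition~(2) of Lemma~\ref{LM21}, you verify the equivalent condition~(3), and the paper also remarks that the semigroup inequality $Q_tf\le P_tf$ gives it immediately), and quasi-regularity on $L^2(E,m)$ is transferred from that on $L^2(E_M,m^\ast)$ via the observation that compact subsets of $E_M$ are compact in $E$. For the second part the paper argues purely form-theoretically---an $\sE^M$-nest of compact sets in $E_M$ is directly an $\bar\sE^M$-nest in $E$---which sidesteps the ``delicate point'' you flag about extending the process; your probabilistic detour via $m$-tightness and \cite[Theorem~3.22]{F01} works too but is a little heavier than necessary.
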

\begin{proof}
The domination property can be established by verifying the second condition of Lemma~\ref{LM21} using the expression in \eqref{eq:34} for $(\sE^M,\sF^M)$.  (This is also clear from the inequality $Q_tf\leq P_tf$ for all $f\in \mathrm{p}\sB(E)$).  We now proceed to demonstrate the quasi-regularity of $(\sE^M,\sF^M)$ on $L^2(E,m)$.  To maintain clarity, we denote  $(\sE^M,\sF^M)$ on $L^2(E,m)$ as $(\bar{\sE}^M,\bar{\sF}^M)$. It is straightforward to verify that each compact subset of $E_M$ is also compact in $E$  because $E_M$ is endowed with the relative topology from $E$.  Consequently,  an $\sE^M$-nest consisting of compact sets is also an $\bar{\sE}^M$-nest.  Therefore,  the quasi-regularity of $(\bar{\sE}^M,\bar{\sF}^M)$ is a consequence of the quasi-regularity of $(\sE^M,\sF^M)$.  
\end{proof}

We have previously demonstrated in Theorem~\ref{THM24} that there exists a unique ``intermediate" Dirichlet form $(\tilde\sE,\tilde\sF)$ such that $\tilde\sE$ dominates $\sE^M$, and $\sE$ is a Silverstein extension of $\tilde\sE$. According to Theorem~\ref{THM33},  this intermediate Dirichlet form corresponds precisely to the part process $X^{E_M}$.

\section{Domination is killing: Quasi-regular case}

In this section,  we aim to establish the converse of Corollary~\ref{COR34}.  Specifically, we will demonstrate that every domination of a Dirichlet form corresponds to the killing transformation induced by an MF $M\in \text{MF}(X)$ with $m(E\setminus E_M)=0$.  

Let $(\sE,\sF)$ and $(\sE',\sF')$ be two Dirichlet forms on $L^2(E,m)$ such that $\sE' \preceq \sE$.  Denote by $(\tilde{\sE},\tilde{\sF})$ the intermediate Dirichlet form obtained in Theorem~\ref{THM24}. 

\subsection{Strong subordination}

Let us first interpret the probabilistic meaning of the strong subordination of $(\sE',\sF')$ to $(\tilde{\sE},\tilde{\sF})$.  It should be pointed out that the Dirichlet form $(\sE,\sF)$ does not play a role in this subsection. 

The following fact is elementary,  and its proof is provided in \cite[Corollary~3.3]{Y96}.

\begin{lemma}\label{LM51}
Any $\sE'$-nest is also an $\tilde{\sE}$-nest.  In particular,  $(\tilde{\sE},\tilde{\sF})$ is quasi-regular provided that $(\sE',\sF')$ is quasi-regular.
\end{lemma}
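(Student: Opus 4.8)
The plan is to deduce the nest statement directly from the definition of a nest, using the description of $(\tilde{\sE},\tilde{\sF})$ from Theorem~\ref{THM24}, and then to read off quasi-regularity. Recall from Theorem~\ref{THM24} that $\tilde{\sF}$ is the $\|\cdot\|_{\sE_1}$-closure of $\sF'$ inside $\sF$ and $\tilde{\sE}=\sE$ on $\tilde{\sF}$; in particular $\sF'$ is $\|\cdot\|_{\tilde{\sE}_1}$-dense in $\tilde{\sF}$, and the strong subordination of $\sE'$ to $\tilde{\sE}$ established there gives $\tilde{\sE}(u,u)\le\sE'(u,u)$, hence $\|u\|_{\tilde{\sE}_1}\le\|u\|_{\sE'_1}$, for every $u\in\mathrm{p}\sF'$. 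For a closed set $F\subset E$ put $\sF'_F:=\{u\in\sF':u=0\ m\text{-a.e. on }E\setminus F\}$ and $\tilde{\sF}_F:=\{u\in\tilde{\sF}:u=0\ m\text{-a.e. on }E\setminus F\}$; since $\sF'\subset\tilde{\sF}$ we have $\sF'_F\subset\tilde{\sF}_F$. As the quasi-notions of a non-symmetric Dirichlet form depend only on its symmetric part, which agrees with the form on the diagonal, an increasing sequence $\{F_n\}$ of closed sets is an $\sE'$-nest (resp.\ $\tilde{\sE}$-nest) precisely when $\bigcup_n\sF'_{F_n}$ is $\|\cdot\|_{\sE'_1}$-dense in $\sF'$ (resp.\ $\bigcup_n\tilde{\sF}_{F_n}$ is $\|\cdot\|_{\tilde{\sE}_1}$-dense in $\tilde{\sF}$).

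So let $\{F_n\}$ be an $\sE'$-nest. Since $\sF'$ is $\|\cdot\|_{\tilde{\sE}_1}$-dense in $\tilde{\sF}$ and $\sF'$ is a lattice, it suffices to $\|\cdot\|_{\tilde{\sE}_1}$-approximate every nonnegative $f\in\sF'$ by elements of $\bigcup_n\tilde{\sF}_{F_n}$. Choose $g_k\in\bigcup_n\sF'_{F_n}$ with $g_k\to f$ in $\|\cdot\|_{\sE'_1}$ and set $h_k:=g_k\wedge f\in\sF'$. Because $f\ge0$, $h_k$ vanishes $m$-a.e.\ wherever $g_k$ does, so $h_k\in\bigcup_n\sF'_{F_n}$; moreover $h_k\le f$ and $\|h_k-f\|_2\le\|g_k-f\|_2\to0$. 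The map $(x,y)\mapsto x\wedge y$ is a normal contraction satisfying the analogue of \eqref{eq:22}, so, exactly as in the proof of Lemma~\ref{LM21}, \cite[I, Proposition~4.11]{MR92} applied to $(\sE',\sF')$ yields $\sE'(h_k,h_k)^{1/2}\le\sE'(g_k,g_k)^{1/2}+\sE'(f,f)^{1/2}$, whence $\sup_k\|h_k\|_{\sE'_1}<\infty$. By the Banach--Saks theorem we may, after replacing $\{h_k\}$ by the Ces\`aro means of a suitable subsequence, assume in addition that $h_k\to f$ in $\|\cdot\|_{\sE'_1}$; since $\{F_n\}$ is increasing, each such Ces\`aro mean still lies in $\bigcup_n\sF'_{F_n}$ and still satisfies $h_k\le f$.

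Now $f-h_k\in\mathrm{p}\sF'$, so subordination gives $\|f-h_k\|_{\tilde{\sE}_1}\le\|f-h_k\|_{\sE'_1}\to0$, while $h_k\in\bigcup_n\sF'_{F_n}\subset\bigcup_n\tilde{\sF}_{F_n}$. Writing a general $f\in\sF'$ as $f^+-f^-$ with $f^\pm\in\mathrm{p}\sF'$ and then invoking the $\|\cdot\|_{\tilde{\sE}_1}$-density of $\sF'$ in $\tilde{\sF}$ (a routine diagonal argument, the closure of $\bigcup_n\tilde{\sF}_{F_n}$ being a linear subspace) shows that $\bigcup_n\tilde{\sF}_{F_n}$ is $\|\cdot\|_{\tilde{\sE}_1}$-dense in $\tilde{\sF}$, i.e.\ $\{F_n\}$ is an $\tilde{\sE}$-nest. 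Consequently every $\sE'$-polar set is $\tilde{\sE}$-polar and every $\sE'$-quasi-continuous function is $\tilde{\sE}$-quasi-continuous. For the ``in particular'': if $(\sE',\sF')$ is quasi-regular then it admits an $\sE'$-nest of compact sets, which by the above is an $\tilde{\sE}$-nest of compact sets; every $u\in\sF'$ has an $\sE'$-quasi-continuous $m$-version, which is then $\tilde{\sE}$-quasi-continuous, and $\sF'$ is $\|\cdot\|_{\tilde{\sE}_1}$-dense in $\tilde{\sF}$; and the countable point-separating family of $\sE'$-quasi-continuous functions furnished by the quasi-regularity of $(\sE',\sF')$ already lies in $\sF'\subset\tilde{\sF}$ and separates points off an $\tilde{\sE}$-polar set. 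These are exactly the defining conditions of quasi-regularity for $(\tilde{\sE},\tilde{\sF})$.

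I expect the main obstacle to be the middle step. Subordination is available only on the cone of nonnegative functions, so the approximating sequence $\{g_k\}$ coming from the $\sE'$-nest cannot be transported to $(\tilde{\sE},\tilde{\sF})$ as it stands; one must first force the competitors below $f$ (into the order interval $[0,f]$) while preserving $\|\cdot\|_{\sE'_1}$-convergence, and this is precisely where the Dirichlet-form normal-contraction estimate together with the Banach--Saks theorem enters, in the same pattern as the proof of Lemma~\ref{LM21}. The remaining bookkeeping --- that nests, polar sets and quasi-continuity for a non-symmetric form are governed by its symmetric part --- is standard (see \cite{MR92} and \cite[Proposition~2.3]{RS95}); and the capacity-theoretic proof of \cite[Corollary~3.3]{Y96}, which compares the $1$-capacities of $\sE'$ and $\tilde{\sE}$ through their equilibrium potentials, gives an alternative route to the same conclusion.
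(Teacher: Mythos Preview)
Your proof is correct. The paper itself does not give a proof but simply refers to \cite[Corollary~3.3]{Y96}, whose argument (as you note at the end) proceeds via a comparison of $1$-capacities and equilibrium potentials for the two forms. Your approach is genuinely different and more self-contained: you work directly with the definition of a nest, use the density $\sF'\subset\tilde{\sF}$ together with the subordination inequality $\tilde{\sE}(u,u)\le\sE'(u,u)$ on $\mathrm{p}\sF'$, and the only nontrivial step is forcing the approximants below $f$ via $h_k=g_k\wedge f$ so that $f-h_k\ge0$ lies in the cone where subordination applies --- this is handled cleanly by the normal-contraction bound and Banach--Saks, exactly mirroring the mechanism in the proof of Lemma~\ref{LM21}. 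The capacity route in \cite{Y96} is slightly more conceptual (it gives $\mathrm{Cap}_{\tilde{\sE}}\le\mathrm{Cap}_{\sE'}$ at once), while yours avoids introducing capacity altogether and makes explicit why the order structure of the Dirichlet space is what drives the result. Your verification of the three quasi-regularity axioms for $(\tilde{\sE},\tilde{\sF})$ from those of $(\sE',\sF')$ is straightforward and correct.
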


The following result essentially establishes that strong subordination corresponds to a killing transformation induced by an MF in $\text{MF}_+$. While this characterization was discussed in \cite[Theorem~3.5]{Y96}, the proof provided therein appears incomplete. Below, we present a new proof.


\begin{theorem}\label{THM52}
Assume that $(\sE',\sF')$ is quasi-regular on $L^2(E,m)$. Then there exists a Borel right process  $\tilde{X}=(\tilde{X}_t)_{t\geq 0}$ properly associated with $(\tilde{\sE},\tilde{\sF})$ and an MF $M\in \text{MF}_+(\tilde{X})$ such that the subprocess $\tilde X^M$ of $\tilde X$, killed by $M$, is properly associated with $(\sE',\sF')$.  \end{theorem}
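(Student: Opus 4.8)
\emph{Strategy.} By Lemma~\ref{LM51}, $(\tilde\sE,\tilde\sF)$ is quasi-regular, so we may choose an $m$-tight Borel right process $\tilde X$ on $E$ properly associated with it; let $X'$ be an $m$-tight Borel right process properly associated with the (quasi-regular) form $(\sE',\sF')$. The plan is to recognize $X'$ as a subprocess of $\tilde X$ obtained by a killing transformation that does not shrink the state space: once $X'$ is equivalent to $\tilde X^{M}$ for some $M\in\text{MF}(\tilde X)$, Theorem~\ref{THM33} identifies the Dirichlet form of $\tilde X^{M}$ with $(\sE',\sF')$, and the ``no shrinking'' part will be exactly the assertion $M\in\text{MF}_+(\tilde X)$.

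\emph{Step 1: from the form inequality to pointwise subordination.} Since $\sE'\preceq\sE$, Lemma~\ref{LM21} shows $\sF'$ is an order ideal in $\sF$, and because $\tilde\sF\subset\sF$ it is then an order ideal in $\tilde\sF$; together with the subordination $\sE'(f,g)\ge\tilde\sE(f,g)$ for $f,g\in\mathrm{p}\sF'$ obtained in Theorem~\ref{THM24}, Lemma~\ref{LM21} yields $\sE'\preceq\tilde\sE$, hence $T'_tf\le\tilde T_tf$ for all $t\ge0$ and non-negative $f\in L^2(E,m)$, where $(T'_t)$ and $(\tilde T_t)$ are the associated semigroups. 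Representing $\tilde T_tf$ and $T'_tf$ (for bounded $f$) by their finely continuous, hence quasi-continuous, versions $x\mapsto\bP_x f(\tilde X_t)$ and $x\mapsto\bP'_xf(X'_t)$ (cf.\ Lemma~\ref{LM32}), the $m$-a.e.\ inequality upgrades to a quasi-everywhere one; after discarding a common $m$-inessential nearly Borel set and replacing $E$ by its complement, so that $\tilde X$ and $X'$ are Borel standard processes on one and the same space (the reduction used in the proofs of Lemma~\ref{LM45} and Theorem~\ref{THM33}), and after a monotone-class passage in $f$, we obtain $Q'_tf(x)\le\tilde P_tf(x)$ for every $x\in E$, $t\ge0$ and $f\in\mathrm{p}\sB(E)$, where $(Q'_t)$ and $(\tilde P_t)$ are the transition semigroups of $X'$ and $\tilde X$. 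Thus $X'$ is subordinate to $\tilde X$ in the pointwise sense of \eqref{eq:12}.

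\emph{Step 2: extracting the MF and showing $E_M=E$.} Subordination of one right process to another on the same Radon space is precisely the situation governed by the killing transformation recalled in \S\ref{SEC31}: by the theory of multiplicative functionals (see \cite[III]{BG68} and \cite[VII]{S88}) there is a right, perfect MF $M$ of $\tilde X$, which we normalize so that $M_\zeta=0$, such that $\tilde X^{M}$ has transition semigroup $(Q'_t)$; a priori $M\in\text{MF}(\tilde X)$ with some finely open set $G:=E_{M}$ of permanent points. By Theorem~\ref{THM33} the Dirichlet form of $\tilde X^{M}$, viewed on $L^2(E,m)$, equals $(\sE^{M},\sF^{M})$ with $\sF^{M}=\tilde\sF^{G}\cap L^2(E,\rho_{M}+\lambda_{M})$; since $\tilde X^{M}$ and $X'$ share their transition semigroup, uniqueness of the properly associated Dirichlet form gives $(\sE^{M},\sF^{M})=(\sE',\sF')$, so in particular $\sF'\subset\tilde\sF^{G}$. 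Now $\tilde\sF^{G}=\{f\in\tilde\sF:\tilde f=0\ \tilde\sE\text{-q.e.\ on }E\setminus G\}$ is $\tilde\sE_1$-closed in $\tilde\sF$, whereas by strong subordination $\sF'$ is $\tilde\sE_1$-dense in $\tilde\sF$; hence $\tilde\sF^{G}=\tilde\sF$, which forces $E\setminus G$ to be $\tilde\sE$-polar (otherwise a $1$-equilibrium potential of a non-polar compact subset of $E\setminus G$ would lie in $\tilde\sF\setminus\tilde\sF^{G}$). In particular $E\setminus G$ is $m$-null, so after one further removal of an $m$-inessential set --- replacing $E$ by $G$, which carries the same Dirichlet form in view of Corollary~\ref{COR42} --- we have $E_{M}=E$, i.e.\ $M\in\text{MF}_+(\tilde X)$. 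Then $\tilde X^{M}$ is properly associated with $(\sE^{M},\sF^{M})=(\sE',\sF')$ by Theorem~\ref{THM33}, which is the assertion.

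\emph{Main obstacle.} The two substantive points are Step~1, where the $m$-a.e.\ domination of the $L^2$-semigroups must be promoted to an \emph{everywhere} comparison of transition kernels while keeping the two processes on a common state space (handling the exceptional sets and the equivalence of the $\sE'$-, $\tilde\sE$- and $\sE$-quasi-notions, cf.\ Remark~\ref{RM49}), and Step~2, namely invoking the general theory of subprocesses to manufacture a bona fide perfect right multiplicative functional from pointwise subordination, together with the verification that its set of permanent points $E_M$ is finely open, so that $M\in\text{MF}(\tilde X)$ in the sense of \S\ref{SEC31}. (One could instead try to exhibit directly a bivariate smooth measure representing $\sE'-\tilde\sE$ and apply Proposition~\ref{PRO39}, but that is circular here since Proposition~\ref{PRO39} rests on the present theorem, so the direct subprocess route is the right one.) The remaining steps are bookkeeping with exceptional sets on top of the already established Theorem~\ref{THM33}.
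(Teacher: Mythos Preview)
Your overall strategy matches the paper's: upgrade the $m$-a.e.\ domination $T'_t\le\tilde T_t$ to a pointwise comparison of resolvents, then invoke the correspondence between (exactly) subordinate resolvents and multiplicative functionals. The execution, however, has a genuine gap precisely at the point you flag as the ``main obstacle''.

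In Step~1 you assert that one may ``discard a common $m$-inessential nearly Borel set'' for both $\tilde X$ and $X'$; the references to Lemma~\ref{LM45} and Theorem~\ref{THM33} do not supply this, since those arguments remove an inessential set for a \emph{single} process. What is needed is a Borel set $N$ that is $\tilde X$-invariant with $m(N)=0$ \emph{and} satisfies $V'_\alpha 1_N=0$ on $E\setminus N$, so that the restricted resolvent of $X'$ is still Markov there. Lemma~\ref{LM51} yields only one implication ($\sE'$-polar $\Rightarrow$ $\tilde\sE$-polar); the converse is Corollary~\ref{COR53}, which \emph{depends} on the present theorem, so it is unavailable to you. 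The paper handles this by an iterative back-and-forth: take an $m$-inessential $N_1$ for $\tilde X$ outside of which the q.e.\ resolvent inequality holds for a countable family $\sC^+_1$; observe that $V'_\alpha 1_{N_1}=0$ only $\sE'$-q.e.; enlarge to an $m$-inessential $N_2\supset N_1$ for $\tilde X$ absorbing the new $\sE'$-exceptional set; repeat. The union $N=\bigcup_k N_k$ finally enjoys both properties, and $\tilde X$ is redefined as the trivial extension of $\tilde X^1|_{E\setminus N}$ (points of $N$ made absorbing), with $V_\alpha$ extended in the same way. Only then does a careful monotone-class argument (through $\sC^+$, $\mathrm{b}C_d(E)$, indicators of compacts via Urysohn, and inner regularity) give $V_\alpha f(x)\le\tilde U_\alpha f(x)$ for \emph{every} $x$. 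This construction is the main content of the proof and is absent from your sketch.

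In Step~2 the paper applies \cite[(56.14)]{S88}, which requires \emph{exact} subordination, namely $\alpha V_\alpha 1_E(x)\to 1$ for every $x$; this is verified directly from the construction, and $E_M=E$ then follows by a two-line contradiction (if $\bP_x(M_0=0)=1$ then $V_\alpha f(x)=0$ for all $f$, impossible). Your alternative route---obtain some $M\in\text{MF}(\tilde X)$, identify $(\sE^M,\sF^M)=(\sE',\sF')$ via Theorem~\ref{THM33}, deduce $\tilde\sF^{E_M}=\tilde\sF$ from strong subordination, conclude $E\setminus E_M$ is $\tilde\sE$-polar, and then delete it---is a reasonable idea, but you have not secured the existence of such an $M$ without first checking exact subordination, and the final deletion changes $\tilde X$ once more, so you would still owe a verification that the restricted $M$ lies in $\text{MF}_+$ of the new process.
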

\begin{proof}
Let $d$ be a metric on $E$ that induces the topology of $E$, and let $C_d(E)$ denote the family of all $d$-uniformly continuous functions on $E$.  The set $\mathrm{b}C_d(E)$ consists of all bounded functions in $C_d(E)$. Since $E$ is Lusin,  there exists a countable subset $\sC\subset \mathrm{b}C_d(E)$ that is dense in $\mathrm{b}C_d(E)$ with respect to the uniform norm (see \cite[A.2]{S88}).  Define $\sC^+:=\{u^+:u\in \sC\}$,  where $u^+:=u\vee 0$.  As $m$ is $\sigma$-finite,  we can select a sequence of increasing Borel sets $E_n$ such that $\bigcup_{n\geq 1}E_n=E$ and $m(E_n)<\infty$.  Let
\[
	\sC^+_1:=\{u^+\cdot 1_{E_n}: u\in \sC, n\geq 1\}. 
\]
Clearly,  $\sC^+_1\subset \mathrm{b}\sB(E)\cap L^2(E,m)$ is countable.  Denote by $\mathbb{Q}_+$ the set of all positive rationals.  

Since $(\tilde{\sE},\tilde{\sF})$ is quasi-regular by Lemma~\ref{LM51},  there exists a Borel right process $\tilde{X}^1$ properly associated with $(\tilde{\sE},\tilde{\sF})$.  Namely,  for any $f\in \sB(E)\cap L^2(E,m)$ and $\alpha>0$,  $\tilde{U}^1_\alpha f$ is an $\tilde{\sE}$-quasi-continuous $m$-version of $\tilde{G}_\alpha f$,  where $\tilde{U}^1_\alpha$ is the resolvent of $\tilde{X}^1$, and $\tilde{G}_\alpha$ is the $L^2$-resolvent of $(\tilde{\sE},\tilde{\sF})$.  See \cite[IV, Proposition~2.8]{MR92}.  Similarly, there exists a Borel right process $Y$ such that its resolvent $V^1_\alpha f$ is an $\sE'$-quasi-continuous $m$-version of $G'_\alpha f$ for any $f\in \mathrm{b}\sB(E)\cap L^2(E,m)$ and $\alpha>0$, where $G'_\alpha$ is the $L^2$-resolvent of $(\sE',\sF')$.  

Since strong subordination implies domination,  it follows from Lemma~\ref{LM21}~(1) that
\begin{equation}\label{eq:51}
	V^1_\alpha f\leq \tilde{U}^1_\alpha f,\quad m\text{-a.e.} 
\end{equation}
for all positive $f\in \mathrm{b}\sB(E)\cap L^2(E,m)$ and $\alpha>0$. Note that $V^1_\alpha f$ is $\tilde{\sE}$-quasi-continuous by Lemma~\ref{LM51}.  Thus,  there exists an $\tilde{\sE}$-nest $\{F_n:n\geq 1\}$ such that 
\[
	\tilde{U}^1_\alpha f,V^1_\alpha f\in C(\{F_n\}),\quad \forall f\in \sC^+_1,\alpha \in \mathbb{Q}_+.
\]
In particular,  it follows from \eqref{eq:51} that 
\begin{equation}\label{eq:53}
	V^1_\alpha f(x)\leq \tilde{U}^1_\alpha f(x),\quad \forall x\in \bigcup_{n\geq 1}F_n, f\in \sC^+_1,\alpha\in \mathbb{Q}_+. 
\end{equation}

Let us now construct an increasing sequence of $m$-inessential Borel sets $\{N_k:k\geq 1\}$ for $\tilde{X}^1$.  We begin by selecting an $m$-inessential Borel set $N_1$ for $\tilde{X}^1$ that contains $E\setminus \bigcup_{n\geq 1}F_n$.  Since $V^1_\alpha 1_{N_1}=0$, $m$-a.e., and $V^1_\alpha 1_{N_1}$ is $\sE'$-quasi-continuous,  for all $\alpha \in \mathbb{Q}_+$,  there exists an $\sE'$-polar set $N'_1$ such that 
\[
	V^1_\alpha 1_{N_1}(x)=0,\quad \forall x\in E\setminus N'_1,\alpha\in \mathbb{Q}_+.
\]
Note that $N'_1$ is also $\tilde{\sE}$-polar by Lemma~\ref{LM51}.  Thus, we can select an $m$-inessential Borel set $N_2$ for $\tilde{X}^1$ such that $N_1\cup N'_1\subset N_2$.  For $k\geq 2$, once $N'_{k-1}$ and $N_k$ have been defined,  we construct $N'_k$ and $N_{k+1}$ using the following method: Select an $\sE'$-polar set $N'_{k}$ that contains $N'_{k-1}$ such that
\begin{equation}\label{eq:52}
	V^1_\alpha 1_{N_k}(x)=0,\quad \forall x\in E\setminus N'_{k},\alpha\in \mathbb{Q}_+,
\end{equation}
and then choose an $m$-inessential Borel set $N_{k+1}$ for $\tilde{X}^1$ such that $N_k\cup N'_k\subset N_{k+1}$.  

Define $N:=\bigcup_{k\geq 1}N_k$ and $E_0:=E\setminus N$.  It is evident that $E_0$ is a Borel invariant set for $\tilde{X}^1$. Hence,  the restricted process $\tilde{X}^1|_{E_0}$ remains a Borel right process.  Let $\tilde{X}$ be the trivial extension of $\tilde{X}^1|_{E_0}$ to $E$,  namely,  $\tilde{X}$ will remain at $x$ indefinitely if $\tilde{X}_0=x\in N$.  Clearly,  $\tilde{X}$ is also a Borel right process properly associated with $(\tilde{\sE},\tilde{\sF})$.  Denote by $(\tilde U_\alpha)_{\alpha>0}$ the resolvent of $\tilde{X}$.  Note that for all $f\in \mathrm{b}\sB(E)$ and $\alpha>0$,
\[
	\tilde{U}_\alpha f=\tilde{U}^1_\alpha f \cdot 1_{E_0}+\frac{1}{\alpha}f\cdot 1_{N}.
\]  
On the other hand,  it follows from \eqref{eq:52} that
\[
	V^1_\alpha 1_N(x)=0,\quad \forall x\in E_0,\alpha>0.
\]
Particularly,  
\begin{equation}\label{eq:55}
	V_\alpha f:=V_\alpha^1f\cdot 1_{E_0}+\frac{1}{\alpha}f\cdot 1_{N},\quad \forall f\in \mathrm{b}\sB(E),\alpha>0
\end{equation}
provides a well-defined resolvent on $E$.  

We assert that 
\begin{equation}\label{eq:54}
	V_\alpha f(x)\leq \tilde{U}_\alpha f(x),\quad \forall x\in E,\alpha>0, f\in \mathrm{bp}\sB^*(E).
\end{equation}
It suffices to consider (and fix) $x\in E_0$.  From \eqref{eq:53}, we know that \eqref{eq:54} holds for $f\in \sC^+_1$ and $\alpha\in \mathbb{Q}_+$.  By applying the monotone convergence theorem (for $1_{E_n}\uparrow 1_E$) and the dominated convergence theorem (for the parameters $\mathbb{Q}_+\ni\alpha_n\rightarrow \alpha>0$),  we can extend \eqref{eq:54} to all $f\in \sC^+$ and all $\alpha>0$.    Fix $\alpha>0$.  Since $\sC$ is dense in $\mathrm{b}C_d(E)$ and $|u^+(y)-v^+(y)|\leq |u(y)-v(y)|$ for all $u,v\in \mathrm{b}C_d(E)$ and $y\in E$,  it follows that \eqref{eq:54} holds for all positive $f\in \mathrm{b}C_d(E)$.  Then, by utilizing \cite[Proposition~A2.1]{S88},  \eqref{eq:54} holds for all positive and bounded $f\in C(E)$.  Note that the Lusin topological space $E$ is normal. Therefore,  Urysohn's lemma (see,  e.g., \cite[Lemma~4.15]{F99}) indicates that \eqref{eq:54} holds for $f=1_K$,  where $K$ is an arbitrary compact set.  Since the finite measure $\mu(\cdot):=V_\alpha(x,\cdot)+\tilde{U}_\alpha(x,\cdot)$ is inner regular on $E$ (see \cite[Theorem~A2.3]{S88}),  we can further obtain \eqref{eq:54} with $f=1_B$ for any Borel set $B$.  This result can be extended to all $f\in \mathrm{b}\mathrm{p}\sB(E)$ by \cite[Theorem~2.10]{F99} and the monotone convergence theorem.  For any $f\in \mathrm{bp}\sB^*(E)$,  there exist $f_1,f_2\in \mathrm{b}\mathrm{p}\sB(E)$ such that $f_1\leq f\leq  f_2$ and $\mu(f_2-f_1)=0$.  Therefore, we can eventually conclude \eqref{eq:54} for all $f\in  \mathrm{bp}\sB^*(E)$. 

For any $x\in E_0$,  it is straightforward to see that
\[
	\alpha V_\alpha 1_E(x)=\alpha V^1_\alpha 1_E(x)=\bE_x \int_0^\infty e^{-t}1_E(Y_{t/\alpha})dt\rightarrow 1_E(x),\quad \alpha\rightarrow \infty.
\]
Evidently,  $\alpha V_\alpha 1_E(x)=1_E(x)$ for all $x\in N$.  Hence,  it follows from \cite[III, Theorem~4.9]{BG68} that $V_\alpha$ is \emph{exactly subordinate} to $U_\alpha$ in the sense of \cite[III, Definition~4.8]{BG68}.  In view of \cite[Theorem~(56.14)]{S88},  there exists a (unique) exact MF $M=(M_t)_{t\geq 0}$ of $\tilde{X}$ such that
\[
	V_\alpha f(x)=\bE_x \int_0^\infty e^{-\alpha t}f(\tilde{X}_t)M_t dt,\quad \forall x\in E, f\in \mathrm{p}\sB^*(E).
\]
It is easy to verify that $E_M=E$.  (If $\bP_x(M_0=0)=1$,  then $V_\alpha f(x)=0$ for all $\alpha>0$ and $f\in \mathrm{b}\sB^*(E)\supset \mathrm{b}C(E)$.  We must have $x\in E_0$ and $f(x)=\lim_{\alpha\rightarrow \infty}\alpha V^1_\alpha f(x)=\lim_{\alpha\rightarrow \infty}\alpha V_\alpha f(x)=0$ for all $f\in \mathrm{b}C(E)$,  which leads to a contradiction.) Particularly,  $M\in \text{MF}_+(\tilde{X})$ and $V_\alpha$ is the resolvent of the subprocess $\tilde{X}^M$.  

By utilizing \eqref{eq:55},  it is straightforward to verify that $\tilde{X}^M$ is $m$-nearly symmetric and that its Dirichlet form coincides with that of $Y$.  In other words,  the Dirichlet form of $\tilde{X}^M$ is $(\sE',\sF')$.  By Theorem~\ref{THM33}, we can conclude that $\tilde{X}^M$ is properly associated with $(\sE',\sF')$.  This completes the proof.
\end{proof}

From the aforementioned theorem and Remark~\ref{RM49}, it can be concluded that strong subordination guarantees the equivalence between the quasi-notions of the two Dirichlet forms.

\begin{corollary}\label{COR53}
An increasing sequence of closed sets is an $\tilde{\sE}$-nest  if and only if it is an $\sE'$-nest. Particularly, $N\subset E$ is an $\tilde{\sE}$-polar set if and only if it is an $\sE'$-polar set.  A function $f$ is $\tilde{\sE}$-quasi-continuous if and only if it is $\sE'$-quasi-continuous.
\end{corollary}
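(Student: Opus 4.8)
The plan is to read everything off from the probabilistic realization of strong subordination in Theorem~\ref{THM52}, combined with the nest comparison already performed in Remark~\ref{RM49}. The guiding observation is that all the quasi-notions appearing in the statement are determined by the collection of nests, so it suffices to prove the first assertion.

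First I would apply Theorem~\ref{THM52} to obtain a Borel right process $\tilde{X}$ properly associated with $(\tilde{\sE},\tilde{\sF})$ together with an MF $M\in\text{MF}_+(\tilde{X})$ such that the subprocess $\tilde{X}^M$ is properly associated with $(\sE',\sF')$. Since a Dirichlet form is uniquely determined by a process properly associated with it, the form of $\tilde{X}^M$ equals both $(\sE',\sF')$ and the explicit form $(\tilde{\sE}^M,\tilde{\sF}^M)$ produced by Theorem~\ref{THM33} (applied with $\tilde{X}$ in place of $X$); in particular $\sE'=\tilde{\sE}^M$. Because $M\in\text{MF}_+(\tilde{X})$ we have $E_M=E$, so the ``part'' step is vacuous and Remark~\ref{RM49} applies with $X=\tilde{X}$: it yields that the $\tilde{\sE}^M$-nests are exactly the $\tilde{\sE}$-nests, i.e. an increasing sequence of closed sets is an $\sE'$-nest if and only if it is an $\tilde{\sE}$-nest. (One of the two inclusions is of course already Lemma~\ref{LM51}.) Finally, a set $N\subset E$ is polar iff $N\subset E\setminus\bigcup_n F_n$ for some nest $\{F_n\}$, and a function is quasi-continuous iff it restricts continuously to the members of some nest; since these characterizations refer only to the family of nests, the equivalence of nests transfers verbatim to polar sets and to quasi-continuous functions for both $(\sE',\sF')$ and $(\tilde{\sE},\tilde{\sF})$.

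I do not expect a genuine obstacle, since the substance has been front-loaded into Theorem~\ref{THM52} and Remark~\ref{RM49}. The only points needing a line of care are the identification $(\sE',\sF')=(\tilde{\sE}^M,\tilde{\sF}^M)$ noted above, and the verification that Remark~\ref{RM49}'s nest argument --- stated there for an $m$-tight Borel right process associated to a generic quasi-regular Dirichlet form --- is indeed being invoked in exactly that setting, namely with $(\tilde{\sE},\tilde{\sF})$ (quasi-regular by Lemma~\ref{LM51}) and the process $\tilde{X}$ furnished by Theorem~\ref{THM52}.
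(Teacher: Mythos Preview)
Your proposal is correct and follows essentially the same approach as the paper: the paper's proof simply invokes Theorem~\ref{THM52} together with Remark~\ref{RM49}, which is precisely what you unpack in detail. Your additional care in identifying $(\sE',\sF')$ with $(\tilde{\sE}^M,\tilde{\sF}^M)$ via Theorem~\ref{THM33} and in noting that Remark~\ref{RM49} is being applied to the quasi-regular form $(\tilde{\sE},\tilde{\sF})$ (via Lemma~\ref{LM51}) with its associated process $\tilde X$ is entirely appropriate.
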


\subsection{Domination is killing}

We are now in a position to state the converse of Corollary~\ref{COR34} under the assumption that  $(\sE,\sF)$ is quasi-regular.  

\begin{theorem}\label{THM41}
Let $(\sE,\sF)$ and $(\sE',\sF')$ be two quasi-regular Dirichlet forms on $L^2(E,m)$ such that $\sE'\preceq \sE$, and let $(\tilde{\sE},\tilde{\sF})$ denote the unique Dirichlet form obtained in Theorem~\ref{THM24}.  Then, there exists a Borel right process $X$ properly associated with $(\sE,\sF)$,  a finely open {Borel set $G$} with respect to $X$ satisfying $m(E\setminus G)=0$,  and an MF $M\in \text{MF}_+(X^G)$, where $X^G$ is the part process of $X$ on $G$, such that
\begin{itemize}
\item[(1)] $X^G$ is properly associated with $(\tilde{\sE},\tilde{\sF})$.
\item[(2)] The subprocess of $X^G$ killed by $M$ is properly associated with $(\sE',\sF')$.  
\end{itemize}
\end{theorem}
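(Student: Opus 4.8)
The plan is to combine the decomposition of domination from Theorem~\ref{THM24} with the two probabilistic identifications already established: Theorem~\ref{THM52} for strong subordination and the analysis of part processes in \S\ref{SEC42} together with Lemma~\ref{LM36} for the Silverstein-extension step. Recall that $(\tilde{\sE},\tilde{\sF})$ is strongly subordinate to... wait, rather, $(\sE',\sF')$ is strongly subordinate to $(\tilde{\sE},\tilde{\sF})$, and $(\sE,\sF)$ is a Silverstein extension of $(\tilde{\sE},\tilde{\sF})$. By Theorem~\ref{THM52} applied to the pair $(\tilde{\sE},\tilde{\sF})$ and $(\sE',\sF')$ (both quasi-regular, the former by Lemma~\ref{LM51}), there is a Borel right process $\tilde{X}$ properly associated with $(\tilde{\sE},\tilde{\sF})$ and an $M\in\text{MF}_+(\tilde X)$ so that $\tilde X^M$ is properly associated with $(\sE',\sF')$. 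So the issue reduces to realizing $\tilde X$ as the part process $X^G$ of some Borel right process $X$ properly associated with the larger form $(\sE,\sF)$, on a finely open set $G$ with $m(E\setminus G)=0$; then $M$, viewed as an element of $\text{MF}_+(X^G)$, gives assertion (2) directly, and assertion (1) is the identification $X^G\leftrightarrow(\tilde\sE,\tilde\sF)$.

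First I would produce a Borel right process $X$ properly associated with $(\sE,\sF)$; this exists by quasi-regularity. The key structural input is that $(\sE,\sF)$ is a Silverstein extension of $(\tilde{\sE},\tilde{\sF})$, i.e.\ $\tilde{\sF}$ is an order ideal in $\sF$ with $\sE=\tilde{\sE}$ on $\mathrm{b}\tilde{\sF}\times\mathrm{b}\tilde{\sF}$. By the structure theory of order ideals for quasi-regular Dirichlet forms (the non-symmetric counterpart of the characterization used for part forms in Lemma~\ref{LM42} and \cite[Theorem~5.10]{F01}), an order ideal of this type must be of the form $\sF^G=\{f\in\sF:\tilde f=0\ \sE\text{-q.e. on }E\setminus G\}$ for an $\sE$-quasi-open (equivalently, after modifying $X$ on an $\sE$-polar set, finely open) set $G$; the condition $\tilde{\sF}$ dense in nothing larger forces $m(E\setminus G)=0$ since $(\sE^G,\sF^G)$ lives on $L^2(G,m|_G)\cong L^2(E,m)$ only when $m$ is carried by $G$. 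Thus $(\tilde{\sE},\tilde{\sF})=(\sE^G,\sF^G)$ as Dirichlet forms, and by Lemma~\ref{LM42} the part process $X^G$ is $m|_G$-tight and properly associated with $(\sE^G,\sF^G)=(\tilde{\sE},\tilde{\sF})$, which is assertion (1).

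For assertion (2): both $X^G$ and the process $\tilde X$ from Theorem~\ref{THM52} are properly associated with the same quasi-regular Dirichlet form $(\tilde{\sE},\tilde{\sF})$, hence they are equivalent in the sense that their resolvents agree q.e.; by \cite[Corollary~3.23]{F01} (or the transfer used repeatedly above) we may, after discarding a common $\tilde{\sE}$-polar ($=\sE$-polar, by Corollary~\ref{COR42}, and $m$-inessential) set, assume they are literally the same Borel standard process on $G$. Transporting $M$ to this common process yields $M\in\text{MF}_+(X^G)$, and the subprocess $(X^G,M)=\tilde X^M$ is properly associated with $(\sE',\sF')$. Finally, via Lemma~\ref{LM36}, $M\cdot 1_{[0,D_{E\setminus G})}\in\text{MF}(X)$ with permanent set $G$, so the two successive killings (first to $G$, then by $M$) realize the full domination as a killing transformation of $X$, matching the picture described in the introduction.

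The main obstacle I anticipate is the second paragraph: establishing cleanly, in the \emph{non-symmetric} quasi-regular setting, that a Silverstein extension forces $\tilde{\sF}$ to be exactly a part form $\sF^G$ with $m(E\setminus G)=0$. The symmetric regular case is classical (\cite[\S6.6]{CF12}), but here I must argue that the order-ideal property plus the algebraic-ideal identity $\sE=\tilde\sE$ on $\mathrm{b}\tilde\sF\times\mathrm{b}\tilde\sF$ pins down a finely open support set for $\tilde{\sF}$; the natural route is to let $G$ be (a finely open version of) the quasi-support of the measure $\sum_n 2^{-n}(1_{A_n}\cdot m)$ for a suitable exhaustion, or equivalently to take $G=E\setminus\{x:\widetilde{U_1 1}(x)=0\text{ for the subprocess}\}$, and then verify $\sF^G\subset\tilde{\sF}$ using boundedly-truncated approximations together with the Banach–Saks/Theorem~1.4.2-type estimate exactly as in the proof of Theorem~\ref{THM24}. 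The reverse inclusion $\tilde{\sF}\subset\sF^G$ is the easier direction. Everything else is assembling results already proved.
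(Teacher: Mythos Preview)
Your proposal is correct and follows essentially the same strategy as the paper: decompose the domination via Theorem~\ref{THM24}, identify the Silverstein extension with a part form on some $\sE$-quasi-open $G$, and invoke Theorem~\ref{THM52} for the strong-subordination step.

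Two remarks on execution. First, the ``main obstacle'' you flag---that a closed order ideal $\tilde{\sF}\subset\sF$ in a quasi-regular Dirichlet space is necessarily of the form $\sF^G$ for an $\sE$-quasi-open $G$---is already available in the literature: the paper simply cites Stollmann \cite{S93} (see also \cite[Remark~5.13]{F01}), so you need not reinvent this via quasi-supports. Second, the paper reverses your order of operations: rather than first producing $\tilde X$ and $M$ on $E$ via Theorem~\ref{THM52} and then transporting $M$ to $X^G$, it first pins down $G$, shrinks it by an $m$-inessential set so that $X^G$ is itself a Borel right process, and then reruns the \emph{argument} of Theorem~\ref{THM52} directly on $L^2(G,m|_G)$ with $X^G$ playing the role of $\tilde X$. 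This avoids the identification/transport step you sketch (matching $\tilde X$ with $X^G$ up to an inessential set and moving $M$ across path spaces), which is doable but fiddly. Your route works; theirs is a bit cleaner.
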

\begin{proof}
According to Theorem~\ref{THM24},  $(\sE,\sF)$ is a Silverstein extension of $(\tilde{\sE},\tilde{\sF})$.  It follows that $\tilde{\sF}$ is a closed order ideal in $\sF$.  Therefore,  the argument presented by Stollman \cite{S93} demonstrates that there exists an $\sE$-quasi-open set $G_1$ such that  $(\tilde{\sE},\tilde{\sF})$ is the part Dirichlet form of $(\sE,\sF)$ on $G_1$;  see also \cite[Remark~5.13]{F01}.  It is evident that $m(E\setminus G_1)=0$,  as $(\tilde\sE,\tilde\sF)$ is a Dirichlet form on $L^2(E,m)$.  

By Lemma~\ref{LM32}~(4) (and its proof),  we can take a Borel right process $X$ properly associated with $(\sE,\sF)$ and assume, without loss of generality, that $G_1$ is a Borel, finely open set with respect to $X$.  It is straightforward to verify that if $N\subset G_1$ is an $m|_{G_1}$-inessential Borel set for the part process $X^{G_1}$,  then $G_1\setminus N$ is also finely open with respect to $X$, and the restricted right process $X^{G_1}|_{G_1\setminus N}$ is precisely the part process $X^{G_1\setminus N}$ of $X$ on $G_1\setminus N$.  Since $X^{G_1}$ is properly associated with the quasi-regular Dirichlet form $(\sE^{G_1},\sF^{G_1})$, as investigated in Lemma~\ref{LM42},  there exists an $m|_{G_1}$-inessential Borel set $N_1$ for $X^{G_1}$ such that $X^{G_1}|_{G_1\setminus N_1}=X^{G_1\setminus N_1}$ is a Borel right process.  
Define $G_2:=G_1\setminus N_1$.  This set is finely open with respect to $X$, and $X^{G_2}$ is properly associated with $(\sE^{G_2},\sF^{G_2})$.  Clearly,  $X^{G_2}$ is also properly associated with $(\sE^{G_1},\sF^{G_1})=(\tilde{\sE},\tilde{\sF})$ since any $\sE^{G_2}$-nest consisting of compact sets is also an $\sE^{G_1}$-nest.   

Recall that $(\sE',\sF')$ is strongly subordinate to $(\tilde{\sE},\tilde{\sF})=(\sE^{G_2},\sF^{G_2})$.  It follows from Corollary~\ref{COR53} that the $\tilde{\sE}$-polar set $E\setminus G_2$ is also $\sE'$-polar.  Therefore,  it is straightforward to verify that $(\sE',\sF')$ is quasi-regular on $L^2(G_2,m|_{G_2})$.  By following the argument presented after \eqref{eq:51} in the proof of Theorem~\ref{THM52} (considering the restrictions of $V_\alpha f, \tilde{U}_\alpha f$ to $E_0$ in \eqref{eq:54} instead) for $(\sE',\sF')$ and $(\tilde{\sE},\tilde{\sF})$ on $L^2(G_2,m|_{G_2})$,  we can select an $m|_{G_2}$-inessential Borel set $N_2$ for $X^{G_2}$ and an MF $M\in \text{MF}_+(X^{G_2}|_{G_2\setminus N_2})$ such that the subprocess of $X^{G_2}|_{G_2\setminus N_2}$ killed by $M$ is properly associated with $(\sE',\sF')$ on $L^2(G_2\setminus N_2, m|_{G_2\setminus N_2})$.  

Define $G:=G_2\setminus N_2$.  Then $G$ is finely open with respect to $X$,  $m(E\setminus G)=0$,  and $X^G=X^{G_2}|_{G_2\setminus N_2}$ is properly associated with $(\tilde{\sE},\tilde{\sF})$.  In addition,  $M\in \text{MF}_+(X^G)$,  and the subprocess of $X^G$ killed by $M$ is properly associated with $(\sE',\sF')$ on both $L^2(G,m|_G)$ and $L^2(E,m)$.  This completes the proof.
\end{proof}
\begin{remark}
According to this proof, the {finely open} set $G$ in the theorem can be selected as a Borel set, and the part process $X^G$ can likewise be taken as a Borel right process.
\end{remark}

The following corollary readily follows from Lemma~\ref{LM36}. 

\begin{corollary}\label{COR46}
Let $(\sE,\sF)$ and $(\sE',\sF')$ be two quasi-regular Dirichlet forms on $L^2(E,m)$ such that $\sE'\preceq \sE$.  Then, there exists a Borel right process $X$ properly associated with $(\sE,\sF)$ and $M\in \text{MF}(X)$ with $m(E\setminus E_M)=0$ such that the subprocess $X^M$ is properly associated with $(\sE',\sF')$.  
\end{corollary}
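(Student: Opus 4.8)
The plan is to obtain this corollary by feeding the output of Theorem~\ref{THM41} into the second half of Lemma~\ref{LM36}; essentially no new argument is required. First I would apply Theorem~\ref{THM41} to the pair $(\sE,\sF)$ and $(\sE',\sF')$ (recalling $\sE'\preceq\sE$). This yields a Borel right process $X$ properly associated with $(\sE,\sF)$, a finely open set $G$ with respect to $X$ satisfying $m(E\setminus G)=0$, and an MF $N\in\text{MF}_+(X^G)$ on the part process $X^G$ such that $X^G$ is properly associated with the intermediate Dirichlet form $(\tilde\sE,\tilde\sF)$ of Theorem~\ref{THM24} and the subprocess of $X^G$ killed by $N$ is properly associated with $(\sE',\sF')$.

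Next I would invoke the converse direction of Lemma~\ref{LM36}: since $G$ is finely open with respect to $X$ and $N\in\text{MF}_+(X^G)$, the functional $M:=N\cdot 1_{[0,D_{E\setminus G})}$ is an element of $\text{MF}(X)$ whose set of permanent points is exactly $E_M=G$ (so that $m(E\setminus E_M)=m(E\setminus G)=0$), and the subprocess $(X^G,N)$ is identical to $(X,M)=X^M$. Combining this with the previous paragraph, $X^M$ is properly associated with $(\sE',\sF')$, which is precisely the assertion; the measure-theoretic bookkeeping is trivial because $m(E\setminus G)=0$ forces $L^2(G,m|_G)=L^2(E,m)$ and Theorem~\ref{THM41} already records proper association on $L^2(E,m)$.

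I do not expect a genuine obstacle here: all the substantive content has been absorbed into Theorems~\ref{THM24}, \ref{THM33}, \ref{THM41} and Lemma~\ref{LM36}, and the corollary merely repackages the two successive killing transformations produced by Theorem~\ref{THM41} — first the killing at $D_{E\setminus G}$ that cuts $X$ down to the part process $X^G$, then the killing of $X^G$ by the MF $N$ — into the single killing transformation of $X$ governed by $M\in\text{MF}(X)$. The only place one should be slightly careful is to confirm that the identification of the two subprocesses in Lemma~\ref{LM36} is an equality of processes (not merely of transition semigroups), so that ``properly associated'' is inherited without further checking; but Lemma~\ref{LM36} is stated at exactly that level of strength, so this is immediate.
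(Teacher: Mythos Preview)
Your proposal is correct and matches the paper's approach exactly: the paper simply states that the corollary readily follows from Lemma~\ref{LM36}, and your argument spells out precisely that reduction, applying Theorem~\ref{THM41} and then using the converse direction of Lemma~\ref{LM36} to merge the two successive killings into a single $M\in\text{MF}(X)$.
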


\section{Domination is killing: General case}\label{SEC5}

Now, we remove the assumption of quasi-regularity for $(\sE,\sF)$.  In this setting, a right process corresponding to $(\sE,\sF)$ on $E$ may not exist. However, by following the approach of Silverstein \cite{S74}, we can extend the space $E$ to establish the conclusions of Theorem~\ref{THM41} on the expanded space. See also \cite[Theorem~6.6.5]{CF12}. The main result is stated as follows.

\begin{theorem}\label{THM51}
Let $(\sE',\sF')$ be a quasi-regular Dirichlet form on $L^2(E,m)$, and let $(\sE,\sF)$ be another Dirichlet form on $L^2(E,m)$,  not necessarily quasi-regular, such that $\sE'\preceq \sE$.  Denote by $(\tilde{\sE},\tilde{\sF})$ the unique Dirichlet form obtained in Theorem~\ref{THM24}. Then, there exists a locally compact separable metric space $\widehat{E}$ and a measurable map $j:E\rightarrow \widehat{E}$ such that,  by defining $\widehat{m}:=m\circ j^{-1}$,  the operator $j^*$ is a unitary map from $L^2(\widehat{E},\widehat{m})$ onto $L^2(E,m)$,  and the following hold:
\begin{itemize}
\item[(i)] The image Dirichlet form $(\widehat{\sE},\widehat{\sF}):=j(\sE,\sF)$ is a regular Dirichlet form on $L^2(\widehat{E},\widehat{m})$;
\item[(ii)] There exists an $\widehat{\sE}$-quasi-open subset $\widehat{G}$ of $\widehat{E}$ with $\widehat{m}(\widehat{E}\setminus \widehat{G})=0$ such that $j$ is a quasi-homeomorphism that maps $(\tilde{\sE},\tilde{\sF})$ to $j(\tilde{\sE},\tilde{\sF})=(\widehat{\sE}^{\widehat{G}},\widehat{\sF}^{\widehat{G}})$,  the part Dirichlet form of $(\widehat{\sE},\widehat{\sF})$ on $\widehat{G}$;
\item[(iii)] $j$ is a quasi-homeomophism that maps $(\sE',\sF')$ to the quasi-regular image Dirichlet form $j(\sE',\sF')$ on $L^2(\widehat{E},\widehat{m})$.  
\end{itemize}
Furthermore,  there exists a Hunt process $\widehat{X}$ on $\widehat{E}$, with $\widehat{G}$ being taken as a finely open set with respect to $\widehat{X}$, and an MF $\widehat M\in \text{MF}_+(\widehat{X}^{\widehat{G}})$,  where $\widehat{X}^{\widehat{G}}$ is the part process of $\widehat{X}$ on $\widehat{G}$, such that $\widehat{X}$ is properly associated with $j(\sE,\sF)$,  $\widehat{X}^{\widehat{G}}$ is properly associated with $j(\tilde{\sE},\tilde{\sF})$, and the subprocess $(\widehat{X}^{\widehat{G}},\widehat M)$ is properly associated with $j(\sE',\sF')$.  
\end{theorem}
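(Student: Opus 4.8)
The plan is to deduce the statement from the quasi-regular case, Theorem~\ref{THM41}, after first replacing $E$ by a locally compact space on which $(\sE,\sF)$ becomes \emph{regular}. The point of leverage is that $(\sE,\sF)$ is a Silverstein extension of the form $(\tilde\sE,\tilde\sF)$, which by Lemma~\ref{LM51} is quasi-regular: indeed, by Theorem~\ref{THM24} (and the characterization of Silverstein extensions recorded before it) $\tilde\sF$ is a closed order ideal in $\sF$ with $\sE=\tilde\sE$ on $\mathrm{b}\tilde\sF\times\mathrm{b}\tilde\sF$. This configuration is what must be transported to a locally compact space.

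First I would build $\widehat E$. Following Silverstein's construction \cite{S74} (see \cite[Theorem~6.6.5]{CF12}, which also uses the quasi-homeomorphism method of \cite[Theorem~1.4.3]{CF12} to pass from the quasi-regular to the locally compact regular setting), applied to the pair $(\tilde\sE,\tilde\sF)\subset(\sE,\sF)$, one obtains a locally compact separable metric space $\widehat E$, a measurable map $j:E\to\widehat E$, and $\widehat m:=m\circ j^{-1}$ with $j^*:L^2(\widehat E,\widehat m)\to L^2(E,m)$ unitary, such that $(\widehat\sE,\widehat\sF):=j(\sE,\sF)$ is a regular Dirichlet form; this is (i). The extended space is built from the $L^\infty$-subalgebra $\mathrm{b}\sF$ by a Gelfand-type procedure, so the non-symmetry of $\sE$ is immaterial. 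Since $\mathrm{b}\tilde\sF$ corresponds under $j$ to an algebraic ideal of $\mathrm{b}\widehat\sF$ on which the two forms agree, the argument of Stollmann \cite{S93} (recalled in the proof of Theorem~\ref{THM41}; see also \cite[Remark~5.13]{F01}) identifies $j(\tilde\sE,\tilde\sF)$ with the part Dirichlet form $(\widehat\sE^{\widehat G},\widehat\sF^{\widehat G})$ on some $\widehat\sE$-quasi-open set $\widehat G$; as $(\tilde\sE,\tilde\sF)$ lives on all of $L^2(E,m)$ we get $\widehat m(\widehat E\setminus\widehat G)=0$, and after discarding an $\widehat\sE$-polar set we may take $\widehat G=j(E)$, which gives (ii). Finally, quasi-regularity is preserved under quasi-homeomorphism and, by Corollary~\ref{COR53}, the $\sE'$- and $\tilde\sE$-exceptional sets coincide, so $j$ is a quasi-homeomorphism for $(\sE',\sF')$ as well and $j(\sE',\sF')$ is quasi-regular; this is (iii).

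Next I would transfer the domination and invoke Theorem~\ref{THM41}. The unitary $j^*$ intertwines the $L^2$-semigroups, so by Lemma~\ref{LM21}~(1) we have $j(\sE',\sF')\preceq j(\sE,\sF)=\widehat\sE$; and since $j^*$ is an $\widehat\sE_1$-isometry of $\widehat\sF$ onto $\sF$, the intermediate form attached by Theorem~\ref{THM24} to the pair $(j(\sE',\sF'),\widehat\sE)$ is exactly $j(\tilde\sE,\tilde\sF)=(\widehat\sE^{\widehat G},\widehat\sF^{\widehat G})$. As $(\widehat\sE,\widehat\sF)$ is regular (hence quasi-regular) and $j(\sE',\sF')$ is quasi-regular, Theorem~\ref{THM41} supplies a Borel right process $\widehat X_0$ properly associated with $(\widehat\sE,\widehat\sF)$, a finely open set $G'$ with $\widehat m(\widehat E\setminus G')=0$, and $\widehat M\in\text{MF}_+(\widehat X_0^{G'})$ such that $\widehat X_0^{G'}$ is properly associated with $(\widehat\sE^{\widehat G},\widehat\sF^{\widehat G})$ and the $\widehat M$-subprocess is properly associated with $j(\sE',\sF')$. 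Since the intermediate form determines the quasi-open set up to an $\widehat\sE$-polar set (as in the construction in the proof of Theorem~\ref{THM41}), we may take $G'=\widehat G$. Because $(\widehat\sE,\widehat\sF)$ is regular its associated process may be refined to a Hunt process; replacing $\widehat X_0$ by such a Hunt process $\widehat X$, and removing if necessary an $\widehat m$-inessential set from $\widehat G$ together with the corresponding restriction of $\widehat M$ (exactly as in the proof of Theorem~\ref{THM41}), yields the asserted $\widehat X$, $\widehat X^{\widehat G}$, and $\widehat M\in\text{MF}_+(\widehat X^{\widehat G})$.

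The hard part will be the first step: producing a single locally compact $\widehat E$ that both regularizes $(\sE,\sF)$ and exhibits $(\tilde\sE,\tilde\sF)$ as the part Dirichlet form of $(\widehat\sE,\widehat\sF)$ on a quasi-open subset of full $\widehat m$-measure. One must verify that Silverstein's extension procedure --- originally formulated for symmetric, regular Dirichlet spaces --- carries over to the present quasi-regular, non-symmetric data, and that the order-ideal (hence part-process) relationship between $\tilde\sF$ and $\sF$ is preserved under the transfer to $\widehat E$. Everything after $\widehat E$ is in place reduces to Theorem~\ref{THM41} together with routine bookkeeping with exceptional sets and the passage from a Borel right process to a Hunt process.
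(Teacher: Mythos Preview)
Your proposal is correct and follows essentially the same route as the paper: invoke the Silverstein--Gelfand regularization \cite[Theorem~6.6.5]{CF12} for the pair $(\tilde\sE,\tilde\sF)\subset(\sE,\sF)$ to obtain (i)--(ii), use Corollary~\ref{COR53} to transfer the nest structure and get (iii), and then apply Theorem~\ref{THM41} on $\widehat E$ for the probabilistic part. The paper differs only in minor bookkeeping: it takes the Hunt process $\widehat X$ and the finely open $\widehat G$ directly from \cite[Theorem~6.6.5]{CF12} rather than first producing a Borel right process via Theorem~\ref{THM41} and then refining, and it verifies explicitly (your missing detail) that the image $\{j(F'_n)\}$ of the common $\tilde\sE$/$\sE'$-nest is a $j(\sE')$-nest; your claim ``$\widehat G=j(E)$'' is imprecise but harmless.
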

\begin{proof}
Note that $(\sE,\sF)$ is a Silverstein extension of $(\tilde{\sE},\tilde{\sF})$,  and according to Lemma~\ref{LM51},  $(\tilde{\sE},\tilde{\sF})$ is quasi-regular.  We can then apply an argument involving the Gelfand transformation, as in \cite[Theorem~6.6.5]{CF12}, to the symmetric parts of $(\tilde{\sE},\tilde{\sF})$ and $(\sE,\sF)$.  This yields an $\tilde{\sE}$-nest $\{F_n: n\geq 1\}$ consisting of compact sets,  a locally compact metrizable space $\widehat{E}$, and a Borel measurable map
\[
	j:\bigcup_{n\geq 1}F_n\rightarrow \widehat{E}
\]
such that $j|_{F_n}$ is a topological homeomorphism from $F_n$ to $\widehat{F}_n:=j(F_n)$ for each $n\geq 1$, $\widehat{m}=m\circ j^{-1}$ is a fully supported Radon measure on $\widehat{E}$, and $j^*$ is unitary from $L^2(\widehat{E},\widehat{m})$ onto $L^2(E,m)$.  Furthermore,  $(\widehat{\sE},\widehat{\sF}):=j(\sE,\sF)$ is a regular Dirichlet form on $L^2(\widehat{E},\widehat{m})$,  and there exists an $\widehat{\sE}$-quasi-open subset $\widehat{G}$ of $\widehat{E}$ with $\widehat{m}(\widehat{E}\setminus \widehat{G})=0$ such that $j$ is a quasi-homoemorphism that maps $(\tilde{\sE},\tilde{\sF})$ to the quasi-regular Dirichlet form $j(\tilde{\sE},\tilde{\sF})=(\widehat{\sE}^{\widehat{G}},\widehat{\sF}^{\widehat{G}})$ on $L^2(\widehat{E},\widehat{m})$.  In particular,  there exists another $\tilde{\sE}$-nest $\{K_n: n\geq 1\}$ such that $\{j(F_n\cap K_n):n\geq 1\}$ is an $\widehat{\sE}^{\widehat{G}}$-nest.  

Since $\sE'$ is strongly subordinate to $\tilde{\sE}$,  it follows from Corollay~\ref{COR53} that the $\tilde{\sE}$-nest $\{F'_n:=F_n\cap K_n: n\geq 1\}$ is also an $\sE'$-nest.  Clearly,  $j:F'_n\rightarrow j(F'_n)$ is a topological homoemorphism for each $n\geq 1$.  As $j^*$ is onto $L^2(E,m)$,  $(\widehat{\sE}',\widehat{\sF}'):=j(\sE',\sF')$ is well-defined as the image Dirichlet form on $L^2(\widehat{E},\widehat{m})$.  To verify that $j$ is a quasi-homeomorphism that maps $(\sE',\sF')$ to $(\widehat{\sE}',\widehat{\sF}')$,  it remains to show that $\{\widehat{F}'_n:=j(F'_n):n\geq 1\}$ forms an $\widehat{\sE}'$-nest.  In fact,  by the definition of the image Dirichlet form,  we have
\begin{equation}\label{eq:51-2}
\begin{aligned}
	\widehat{\sF}'_{\widehat{F}'_n}&=\{\widehat{f}\in L^2(\widehat{E},\widehat{m}): \widehat{f}\circ j\in \sF',\widehat{f}=0,\widehat{m}\text{-a.e. on }\widehat{E}\setminus j(F'_n)\} \\
	&=\{\widehat{f}\in L^2(\widehat{E},\widehat{m}): \widehat{f}\circ j\in \sF',\widehat{f}\circ j=0,m\text{-a.e. on }E\setminus F'_n\} \\
	&=\{\widehat{f}\in L^2(\widehat{E},\widehat{m}): \widehat{f}\circ j\in \sF'_{F'_n}\}.  
\end{aligned}\end{equation}
As a result,  $\bigcup_{n\geq 1}\widehat{\sF}'_{\widehat{F}'_n}$ is $\widehat{\sE}'_1$-dense in $\widehat{\sF}'=\{\widehat{f}\in L^2(\widehat{E},\widehat{m}): \widehat{f}\circ j\in \sF'\}$.  This confirms that $\{\widehat{F}'_n:n\geq 1\}$ is indeed an $\widehat{\sE}'$-nest.  

For the second part of the statements,  we observe that the existence of $\widehat{X}$ and the fact that $\widehat{G}$ can be chosen as a finely open set with respect to $\widehat{X}$ follow directly from \cite[Theorem~6.6.5]{CF12}.  It is straightforward to verify that $j(\sE',\sF')$ is strongly subordinate to $j(\tilde{\sE},\tilde{\sF})$ and that $j(
\sE,\sF)$ is a Silverstein extension of $j(\tilde{\sE},\tilde{\sF})$.  By applying Theorem~\ref{THM41} to these Dirichlet forms,  we can further identify an MF $M\in \text{MF}_+(\widehat{X}^{\widehat{G}})$ that satisfies the desired conditions.  This completes the proof.
\end{proof}

Similar to Corollary~\ref{COR46},  this result  directly leads to the following corollary.

\begin{corollary}\label{COR52}
Let $(\sE',\sF')$ be a quasi-regular Dirichlet form on $L^2(E,m)$, and let $(\sE,\sF)$ be another (not necessarily quasi-regular) Dirichlet form on $L^2(E,m)$ such that $\sE'\preceq \sE$.   Then there exists a locally compact separable metric space $\widehat{E}$ and a measurable map $j:E\rightarrow \widehat{E}$ such that, by defining $\widehat{m}:=m\circ j^{-1}$,  $j^*$ is a unitary map from $L^2(\widehat{E},\widehat{m})$ onto $L^2(E,m)$,  and the following hold:
\begin{itemize}
\item[(1)] The image Dirichlet form $j(\sE,\sF)$ is a regular Dirichlet form on $L^2(\widehat{E},\widehat{m})$,  and $j$ serves as a quasi-homeomorphism that maps $(\sE',\sF')$ to the quasi-regular image Dirichlet form $j(\sE',\sF')$ on $L^2(\widehat{E},\widehat{m})$.
\item[(2)] There exists a Hunt process $\widehat{X}$ on $\widehat{E}$ and an MF $\widehat{M}\in \text{MF}(\widehat{X})$ with $\widehat{m}(\widehat{E}\setminus E_{\widehat{M}})=0$ such that $\widehat{X}$ is properly associated with $j(\sE,\sF)$, and the subprocess $(\widehat{X},\widehat{M})$ is properly associated with $j(\sE',\sF')$.  
\end{itemize}
\end{corollary}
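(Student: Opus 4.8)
The plan is to deduce Corollary~\ref{COR52} from Theorem~\ref{THM51} in exactly the way that Corollary~\ref{COR46} was deduced from Theorem~\ref{THM41}: Theorem~\ref{THM51} already delivers all of the analytic data in part~(1) verbatim and realizes the dominated form by a \emph{part} process followed by a killing by an MF in $\text{MF}_+$; what remains is only to fold these two successive killings into a single MF of the ambient Hunt process, and this is precisely the converse half of Lemma~\ref{LM36}.

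First I would apply Theorem~\ref{THM51} to $(\sE,\sF)$ and $(\sE',\sF')$ (whose hypotheses coincide with those of the present corollary) to produce the locally compact separable metric space $\widehat{E}$, the measurable map $j:E\to\widehat{E}$ with $\widehat{m}=m\circ j^{-1}$ and $j^*$ unitary from $L^2(\widehat{E},\widehat{m})$ onto $L^2(E,m)$, the regular Dirichlet form $(\widehat{\sE},\widehat{\sF})=j(\sE,\sF)$, the $\widehat{\sE}$-quasi-open set $\widehat{G}$ with $\widehat{m}(\widehat{E}\setminus\widehat{G})=0$, the Hunt process $\widehat{X}$ on $\widehat{E}$ for which $\widehat{G}$ may be taken finely open, and an MF $\widehat{M}_0\in\text{MF}_+(\widehat{X}^{\widehat{G}})$, with $\widehat{X}$ properly associated with $j(\sE,\sF)$, $\widehat{X}^{\widehat{G}}$ with $j(\tilde{\sE},\tilde{\sF})$, and the subprocess $(\widehat{X}^{\widehat{G}},\widehat{M}_0)$ with $j(\sE',\sF')$. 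Assertion~(1) is then nothing more than the combination of conclusions~(i) and~(iii) of Theorem~\ref{THM51}.

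For assertion~(2), the Hunt process $\widehat{X}$ is in particular a Borel right process on the Lusin space $\widehat{E}$, $\widehat{G}$ is finely open with respect to $\widehat{X}$, and $\widehat{M}_0\in\text{MF}_+(\widehat{X}^{\widehat{G}})$, so the converse part of Lemma~\ref{LM36} applies with $X=\widehat{X}$, $G=\widehat{G}$ and $M=\widehat{M}_0$. It gives that $\widehat{M}:=\widehat{M}_0\cdot 1_{[0,D_{\widehat{E}\setminus\widehat{G}})}$ lies in $\text{MF}(\widehat{X})$, has set of permanent points $E_{\widehat{M}}=\widehat{G}$ (so that $\widehat{m}(\widehat{E}\setminus E_{\widehat{M}})=\widehat{m}(\widehat{E}\setminus\widehat{G})=0$), and that the subprocess $(\widehat{X},\widehat{M})$ coincides with $(\widehat{X}^{\widehat{G}},\widehat{M}_0)$. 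Hence $(\widehat{X},\widehat{M})$ is properly associated with $j(\sE',\sF')$ and $\widehat{X}$ with $j(\sE,\sF)$, which is assertion~(2).

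There is no real obstacle: the argument is pure assembly from Theorem~\ref{THM51} and Lemma~\ref{LM36}. The only points calling for a little care are purely bookkeeping: that a Hunt process qualifies as the ``right process $X$'' in the hypotheses of Lemma~\ref{LM36} (being a Borel right process on a Lusin space); that $j(\sE',\sF')$ is regarded as a Dirichlet form on $L^2(\widehat{E},\widehat{m})$ rather than on $L^2(\widehat{G},\widehat{m}|_{\widehat{G}})$, which is legitimate since $\widehat{m}(\widehat{E}\setminus\widehat{G})=0$, as in the proof of Corollary~\ref{COR34}; and that the MF $\widehat{M}_0$ on the part process $\widehat{X}^{\widehat{G}}$ must not be confused with the MF $\widehat{M}=\widehat{M}_0\cdot 1_{[0,D_{\widehat{E}\setminus\widehat{G}})}$ on $\widehat{X}$ that appears in the statement.
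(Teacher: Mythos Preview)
Your proposal is correct and follows essentially the same approach as the paper: the paper justifies this corollary by simply noting that it follows from Theorem~\ref{THM51} in the same way that Corollary~\ref{COR46} followed from Theorem~\ref{THM41} (namely via Lemma~\ref{LM36}), and you have spelled out precisely those details.
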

\begin{remark}
From Remark~\ref{RM311}, it is clear that if $\sE$ is local, then $\sE'$ is also local. This observation reflects the preservation of locality under the given conditions. For further discussions and related results in the regular and symmetric setting, see \cite[Theorem~4.3]{K18}.
\end{remark}

\section{Sandwiched Dirichlet form for domination}

We consider two Dirichlet forms $(\sE,\sF)$ and $(\sE',\sF')$ on $L^2(E,m)$ such that $\sE'\preceq \sE$.  The goal of this section is to explore the properties of a third Dirichlet form$(\sA,\cD(\sA))$ on $L^2(E,m)$ that lies between $(\sE, \sF)$ and $(\sE',\sF')$. Specifically,  $(\sA,\cD(\sA))$ satisfies the sandwiching property:
\[
\sE' \preceq \sA \preceq \sE.
\]
This setup enables a deeper investigation into the structure of intermediate forms and their associated processes.

\subsection{General characterization}

The primary assumption for this section is the quasi-regularity of $(\sE',\sF')$. However, to facilitate exposition, we will also assume the quasi-regularity of $(\sE,\sF)$ and $(\sA,\cD(\sA))$. While these additional assumptions streamline the analysis, they are not strictly necessary. Indeed, through arguments analogous to those in \S\ref{SEC5}, the results can be extended to the case where $(\sE,\sF)$ and $(\sA,\cD(\sA))$ are not quasi-regular. The following lemma illustrates this extension and demonstrates how quasi-regularity assumptions can be relaxed.

\begin{lemma}\label{LM61}
Let $(\sA,\cD(\sA))$ be a Dirichlet form on $L^2(E,m)$ satisfying $\sE'\preceq \sA\preceq \sE$,  where $(\sE',\sF')$ is assumed to be quasi-regular.  Then, there exists a locally compact separable metric space $\widehat{E}$ and a measurable map $j:E\rightarrow \widehat{E}$ such that,  by defining $\widehat{m}:=m\circ j^{-1}$,  the operator $j^*$ is a unitary map from $L^2(\widehat{E},\widehat{m})$ onto $L^2(E,m)$,  and the following hold:
\begin{itemize}
\item[(1)] $j$ is a quasi-homoemorphism that maps $(\sE',\sF')$ to the quasi-regular image Dirichlet form $j(\sE',\sF')$ on $L^2(\widehat{E},\widehat{m})$.
\item[(2)] The image Dirichlet form $j(\sA,\cD(\sA))$ is quasi-regular on $L^2(\widehat{E},\widehat{m})$.
\item[(3)] The image Dirichlet form $j(\sE,\sF)$ is regular on $L^2(\widehat{E},\widehat{m})$.   
\end{itemize}
Particularly,  the domination relationship
\[
	j(\sE',\sF')\preceq j(\sA,\cD(\sA))\preceq j(\sE,\sF)
\]	
is preserved under the transformation $j$.
\end{lemma}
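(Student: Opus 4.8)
The plan is to run the regularization argument from the proof of Theorem~\ref{THM51} simultaneously for the three forms $\sE'$, $\sA$ and $\sE$. First I would extract the structural consequences of the hypothesis. Since $\sE'\preceq\sA\preceq\sE$, Lemma~\ref{LM21} gives $\sF'\subseteq\cD(\sA)\subseteq\sF$ with each inclusion an order ideal, and in particular $\sE'\preceq\sE$. Applying Theorem~\ref{THM24} to the dominations $\sE'\preceq\sA$ and $\sE'\preceq\sE$ produces Dirichlet forms $(\breve\sE,\breve\sF)$ and $(\tilde\sE,\tilde\sF)$, where $\breve\sF$ (resp.\ $\tilde\sF$) is the closure of $\sF'$ inside $\cD(\sA)$ with respect to the $\sA_1$-norm (resp.\ inside $\sF$ with respect to the $\sE_1$-norm), such that $\sE'$ is strongly subordinate to each of them, $\sA$ is a Silverstein extension of $\breve\sE$, and $\sE$ is a Silverstein extension of $\tilde\sE$. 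By Lemma~\ref{LM51} (applied to $\breve\sE$ and to $\tilde\sE$), both $\breve\sE$ and $\tilde\sE$ are quasi-regular; and by the argument proving Corollary~\ref{COR53} (which only uses that $\sE'$ is quasi-regular and strongly subordinate to the form under consideration), the $\sE'$-nests, $\breve\sE$-nests and $\tilde\sE$-nests coincide, as do the corresponding polar sets and quasi-continuous functions. In particular these three quasi-regular forms share a single nest of compact sets.

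Next I would carry out the Gelfand-transformation construction of \cite[Theorem~6.6.5]{CF12}, exactly as in the proof of Theorem~\ref{THM51} applied to the symmetric parts of $(\sE,\sF)$ and its Silverstein subform $(\tilde\sE,\tilde\sF)$, but choosing the countable family of bounded quasi-continuous functions that generates the spectrum large enough to meet at once the requirements that make the image of $(\sE,\sF)$ regular and the analogous requirements for $(\sA,\cD(\sA))$ relative to its Silverstein subform $(\breve\sE,\breve\sF)$. This enlargement is harmless: the quasi-notions involved are governed by the common compact nest of the first step, and $\mathrm{b}\breve\sF$ is an algebraic ideal in $\mathrm{b}\cD(\sA)$ in exactly the way $\mathrm{b}\tilde\sF$ is an algebraic ideal in $\mathrm{b}\sF$, so the local control underlying the Gelfand transform for $\sE$ over $\tilde\sE$ applies verbatim to $\sA$ over $\breve\sE$. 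The construction yields a locally compact separable metric space $\widehat E$ (the Gelfand spectrum), a measurable map $j\colon E\to\widehat E$ with $\widehat m:=m\circ j^{-1}$ a fully supported Radon measure and $j^*$ unitary from $L^2(\widehat E,\widehat m)$ onto $L^2(E,m)$, such that $j$ is, simultaneously, a quasi-homeomorphism for $(\sE',\sF')$, for $(\sA,\cD(\sA))$ and for $(\sE,\sF)$.

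It then remains to read off (1)--(3) and the preservation of domination. By construction $j(\sE,\sF)$ is a regular Dirichlet form on $L^2(\widehat E,\widehat m)$, which is item~(3); as in Theorem~\ref{THM51}, $j(\tilde\sE,\tilde\sF)$ is the part Dirichlet form of $j(\sE,\sF)$ on a quasi-open subset of full $\widehat m$-measure. Since $j$ is a quasi-homeomorphism and quasi-regularity is preserved under quasi-homeomorphisms, $j(\sE',\sF')$ is quasi-regular, which is item~(1); the same reasoning, using $\cD(\sA)\subseteq\sF$ so that the domain of $j(\sA,\cD(\sA))$ is contained in that of $j(\sE,\sF)$ and the $j(\sE)$-quasi-notions restrict correctly to it, gives that $j(\sA,\cD(\sA))$ is quasi-regular, which is item~(2). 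Finally, $j^*$ is a unitary lattice isomorphism carrying $\sF'\subseteq\cD(\sA)\subseteq\sF$ onto the domains of $j(\sE',\sF')$, $j(\sA,\cD(\sA))$ and $j(\sE,\sF)$ and intertwining the corresponding bilinear forms, so the order-ideal inclusions and the form inequalities of Lemma~\ref{LM21}(3) that express $\sE'\preceq\sA\preceq\sE$ transfer intact, giving $j(\sE',\sF')\preceq j(\sA,\cD(\sA))\preceq j(\sE,\sF)$.

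The one genuinely delicate point is making a \emph{single} quasi-homeomorphism serve all three forms: each of them on its own is handled precisely as in Theorem~\ref{THM51}, but the compactification $\widehat E$ --- built primarily to regularize $(\sE,\sF)$ --- must also be rich enough that $j(\sA,\cD(\sA))$ comes out quasi-regular rather than failing on part of the ``$\sA$-boundary''. This forces one to feed into the Gelfand transform enough functions from $\mathrm{b}\cD(\sA)$, controlled through the ideal $\mathrm{b}\breve\sF$, alongside those needed for $\sE$; and it is the common-nest property of Corollary~\ref{COR53} for $\sE'$, $\breve\sE$ and $\tilde\sE$ that makes this possible without disturbing the regularization of $(\sE,\sF)$ or the quasi-homeomorphism property for $(\sE',\sF')$. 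Once these compatibilities are secured, the remaining verifications are routine bookkeeping modeled on the proof of Theorem~\ref{THM51}.
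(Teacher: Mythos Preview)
Your approach differs structurally from the paper's. Rather than a single Gelfand regularization, the paper applies Corollary~\ref{COR52} twice in succession: first to $\sE'\preceq\sA$, producing $j_1:E\to\widehat E_1$ with $j_1(\sA,\cD(\sA))$ \emph{regular} and $j_1$ a quasi-homeomorphism for $\sE'$; then to $j_1(\sA)\preceq j_1(\sE)$, producing $j_2:\widehat E_1\to\widehat E$ with $j_2j_1(\sE,\sF)$ regular and $j_2$ a quasi-homeomorphism for the already quasi-regular $j_1(\sA)$. Setting $j:=j_2\circ j_1$, one then checks (via a common-nest argument close to the one you describe) that $j$ is still a quasi-homeomorphism for $\sE'$. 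The point is that after step one $j_1(\sA)$ is regular, so its quasi-regularity survives step two automatically; the ``genuinely delicate point'' you identify is never confronted.

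In your single-step scheme there is a real gap exactly at that point. You correctly get $j(\sE,\sF)$ regular and $j(\sE',\sF')$, $j(\breve\sE,\breve\sF)$ quasi-regular. But the step ``the same reasoning\dots gives that $j(\sA,\cD(\sA))$ is quasi-regular'' is not justified: being a Silverstein extension of a quasi-regular form does not entail quasi-regularity, and neither does domination by a regular form. Concretely, what is missing is a $j(\sA)$-nest of compact sets. A $j(\sE')$-nest (equivalently a $j(\breve\sE)$-nest) need not be a $j(\sA)$-nest, since $j(\cD(\sA))$ may strictly contain $j(\breve\sF)$; and a $j(\sE)$-nest need not be a $j(\sA)$-nest either, since $j(\sE)_1$-density inside $j(\sF)$ does not transfer to $j(\sA)_1$-density inside $j(\cD(\sA))$. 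Enriching the Gelfand generating set with functions from $\mathrm{b}\cD(\sA)\subset\mathrm{b}\sF$ addresses the quasi-continuity and separation conditions in the definition of quasi-regularity but does nothing for the nest condition. The probabilistic route is also blocked: converting the $\widehat m$-a.e.\ resolvent inequality into a pointwise one, as in the proof of Theorem~\ref{THM52}, requires a Borel right process for $j(\sA)$, which presupposes the quasi-regularity you are trying to establish. The paper's two-step iteration sidesteps all of this by \emph{producing} regularity of the middle form at step one and then merely \emph{preserving} it at step two.
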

\begin{proof}
We begin by applying Corollary~\ref{COR52} to the Dirichlet forms $(\sE',\sF')$ and $(\sA,\cD(\sA))$.  This provides us with a locally compact separable metric space $\widehat{E}_1$ and a measurable map $j_1:E\rightarrow \widehat{E}_1$ such that,  by letting $\widehat{m}_1:=m\circ j^{-1}_1$,  $j^*_1$ is a unitary map from $L^2(\widehat{E}_1,\widehat{m}_1)$ onto $L^2(E,m)$, $j_1$ acts as a quasi-homeomorphism that maps $(\sE',\sF')$ to the quasi-regular Dirichlet form $j_1(\sE',\sF')$,  and  $j_1(\sA,\cD(\sA))$ is regular on $L^2(\widehat{E}_1,\widehat{m}_1)$.  Since $j^*_1$ is onto $L^2(E,m)$,  it follows that $j_1(\sE,\sF)$ defines a Dirichlet form on $L^2(\widehat{E}_1,\widehat{m}_1)$.  Additionally,  the domination relationship
\[
	j_1(\sE',\sF')\preceq j_1(\sA,\cD(\sA))\preceq j_1(\sE,\sF)
\]	
is preserved under the transformation $j_1$. 
Next,  we apply Corollary~\ref{COR52} to $j_1(\sA,\cD(\sA))$ and $j_1(\sE,\sF)$.  This yields another locally compact separable metric space $\widehat{E}$ and a measurable map $j_2:\widehat{E}_1\rightarrow \widehat{E}$ such that, by letting $\widehat{m}:=\widehat{m}_1\circ j_2^{-1}$,  $j^*_2$ is a unitary map from $L^2(\widehat{E},\widehat{m})$ onto $L^2(\widehat{E}_1,\widehat{m}_1)$,  $j_2$ is a quasi-homeomorphism that maps $j_1(\sA,\cD(\sA))$ to the quasi-regular image Dirichlet form $j_2\circ j_1(\sA,\cD(\sA))$,  and the image Dirichlet form $j_2\circ j_1(\sE,\sF)$ is regular on $L^2(\widehat{E},\widehat{m})$.  Particularly,  $j_2\circ j_1(\sE',\sF')$ gives a Dirichlet form on $L^2(\widehat{E},\widehat{m})$.  

Define $j:=j_2\circ j_1$,  which is clearly Borel measurable from $E$ to $\widehat{E}$.  It is evident that $\widehat{m}=m\circ j^{-1}$,  $j^*$ is a unitary map from $L^2(\widehat{E},\widehat{m})$ onto $L^2(E,m)$,  and
\[
 j(\sA,\cD(\sA))=j_2\circ j_1(\sA,\cD(\sA)),\quad j(\sE,\sF)=j_2\circ j_1(\sE,\sF).
\]
Since $j_1$ is a quasi-homeomorphism that maps $(\sE',\sF')$ to $j_1(\sE',\sF')$,  it suffices to show that $j_2$ is a quasi-homoemorphism that maps $j_1(\sE',\sF')$ to $j(\sE',\sF')=j_2\circ j_1(\sE',\sF')$.  To accomplish this,  consider a $j_1\sA$-nest $\{\widehat{F}^1_n\subset \widehat{E}_1: n\geq 1\}$ such that $j_2$ is a topological homeomorphism from $\widehat{F}^1_n$ to $ j_2(\widehat{F}^1_n)$.  Let $\widehat{G}_1$ be the $j_1\sA$-quasi-open subset of $\widehat{E}_1$ as specified in Theorem~\ref{THM51} for $(\sE',\sF')$ and $(\sA,\cD(\sA))$.  According to Remark~\ref{RM49},  $\{\widehat{F}^1_n\cap \widehat{G}_1:n\geq 1\}$ forms a $j_1\sE'$-nest.  Consider another $j_1\sE'$-nest $\{\widehat{K}^1_n: n\geq 1\}$ consisting of compact sets in $\widehat{G}_1$,  and define $\widehat{K}'^{1}_n:=\widehat{K}^1_n\cap \widehat{F}^1_n\cap \widehat{G}_1$.  Then $\{\widehat{K}'^1_n:n\geq 1\}$ is also a $j_1\sE'$-nest,  and $j_2: \widehat{K}'^1_n\rightarrow \widehat{K}_n:=j_2(\widehat{K}'^1_n)$ is a topological homoemorphism.  It can be verified that $\{\widehat{K}_n:n\geq 1\}$ is a $j\sE'$-nest by following the argument in \eqref{eq:51-2}.  This completes the proof.
\end{proof}

From this point forward, we will further assume that both $(\sA,\cD(\sA))$ and $(\sE,\sF)$ are quasi-regular on $L^2(E,m)$.  Let $X$ denote a Borel right process properly associated with $(\sE,\sF)$.  According to Corollary~\ref{COR46},  there exists an MF $M\in \text{MF}(X)$ with $m(E\setminus E_M)=0$ such that 
\[
(\sE',\sF')=(\sE^M,\sF^M).
\]
Recall that $\nu_M|_{E_M\times E_M}$ is a bivariate smooth measure with respect to $\sE^{E_M}$.  Specifically,  $\bar{\nu}_M:=\frac{1}{2}\left(\rho_M+\lambda_M\right)|_{E_M}$ is smooth with respect to $\sE^{E_M}$, and $\nu_M|_{E_M\times E_M\setminus d}\leq \nu|_{E_M\times E_M\setminus d}$,  where $\nu$ is the canonical measure of $X$.

\begin{theorem}\label{THM62}
Let $(\sE,\sF)$ and $(\sE^M,\sF^M)$ be given as above.  
A quasi-regular Dirichlet form $(\sA,\cD(\sA))$ on $L^2(E,m)$ is sandwiched between $\sE$ and $\sE^M$,  i.e.,  
\[
	\sE^M\preceq \sA\preceq \sE,
\]
if and only if there exists an MF $M'\in \text{MF}(X)$ with the properties 
\[
	E_M\subset E_{M'},\quad \nu_{M'}|_{E_M\times E_M}\leq \nu_M|_{E_M\times E_M}
\]
such that $(\sA,\cD(\sA))=(\sE^{M'},\sF^{M'})$. 
\end{theorem}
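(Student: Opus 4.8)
The plan is to prove the two implications separately, using the representation of $(\sE^M,\sF^M)$ in Theorem~\ref{THM33} together with the equivalences ``killing is domination'' (Corollary~\ref{COR34}) and ``domination is killing'' (Corollary~\ref{COR46}).

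\textbf{Sufficiency.} Suppose $M'\in\text{MF}(X)$ satisfies $E_M\subset E_{M'}$ and $\nu_{M'}|_{E_M\times E_M}\le\nu_M|_{E_M\times E_M}$. Since $m(E\setminus E_M)=0$ and $E_M\subset E_{M'}$, also $m(E\setminus E_{M'})=0$, so Corollary~\ref{COR34} applied to $M'$ gives $\sE^{M'}\preceq\sE$ and the quasi-regularity of $(\sE^{M'},\sF^{M'})$. It remains to verify $\sE^M\preceq\sE^{M'}$ via condition~(3) of Lemma~\ref{LM21} using the formulas of Theorem~\ref{THM33}. The substantive point is $\sF^M\subset\sF^{M'}$: for $f\in\sF^M$ one has $\tilde f=0$ $\sE$-q.e.\ on $E\setminus E_M\supset E\setminus E_{M'}$, hence $f\in\sF^{E_{M'}}$; and, since $\tilde f$ vanishes q.e.\ on $E\setminus E_M$ and $\rho_{M'}$ charges no $\sE$-polar set (Lemma~\ref{LM45}),
\[
\int_{E_M}\tilde f^2\,d\rho_{M'}=\int_{E_M}\tilde f(y)^2\,\nu_{M'}(E_M\times dy)+\int_{E_M}\tilde f(y)^2\,\nu_{M'}((E\setminus E_M)\times dy),
\]
where the first term is $\le\int_{E_M}\tilde f^2\,d\rho_M<\infty$ by the hypothesis on $E_M\times E_M$, and the second is $\le\int_{(E\setminus E_M)\times E_M}(\tilde f(x)-\tilde f(y))^2\,\nu(dx\,dy)<\infty$ by \eqref{eq:39}, because $\nu_{M'}$ is dominated off the diagonal by the canonical measure $\nu$ and $\tilde f$ vanishes q.e.\ on $E\setminus E_M$; the same bound holds for $\lambda_{M'}$, so $f\in\sF^{M'}$. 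That $\sF^M$ is an order ideal in $\sF^{M'}$ is immediate from the definition of $\sF^M$. Finally, for non-negative $f,g\in\sF^M$, since $\tilde f\otimes\tilde g$ vanishes q.e.\ off $E_M\times E_M$ while the marginals of $\nu_M,\nu_{M'}$ charge no $\sE$-polar set,
\[
\sE^M(f,g)-\sE^{M'}(f,g)=\bigl(\nu_M-\nu_{M'}\bigr)\big|_{E_M\times E_M}(\tilde f\otimes\tilde g)\ge0 ,
\]
so $\sE^M\preceq\sE^{M'}\preceq\sE$.

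\textbf{Necessity.} Let $(\sA,\cD(\sA))$ be quasi-regular with $\sE^M\preceq\sA\preceq\sE$. By Corollary~\ref{COR46} applied to $\sA\preceq\sE$ there is a Borel right process properly associated with $(\sE,\sF)$ carrying an MF whose subprocess has Dirichlet form $\sA$; since two such processes agree off an $m$-inessential set, we may take this MF to be some $M'\in\text{MF}(X)$ with $m(E\setminus E_{M'})=0$ and $(\sA,\cD(\sA))=(\sE^{M'},\sF^{M'})$. From $\sE^M\preceq\sA$ we get $\sF^M\subset\sF^{M'}$, hence, taking $\sE_1$-closures in $\sF$ and invoking Theorems~\ref{THM24} and~\ref{THM41} (the intermediate form of a domination by $\sE$ is the part Dirichlet form of $\sE$ on the associated finely open set), $\sF^{E_M}\subset\sF^{E_{M'}}$; if $E_M\setminus E_{M'}$ were not $\sE$-polar it would carry a non-trivial $1$-equilibrium potential of $X^{E_M}$ lying in $\sF^{E_M}\setminus\sF^{E_{M'}}$, a contradiction, so $E_M\setminus E_{M'}$ is $\sE$-polar, hence $m$-inessential, and after discarding it we may assume $E_M\subset E_{M'}$. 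Then Lemma~\ref{LM21} applied to $\sE^M\preceq\sE^{M'}$ gives $\sE^M(f,g)\ge\sE^{M'}(f,g)$ for all non-negative $f,g\in\sF^M$, i.e.\ $\nu_M|_{E_M\times E_M}(\tilde f\otimes\tilde g)\ge\nu_{M'}|_{E_M\times E_M}(\tilde f\otimes\tilde g)$ exactly as in the sufficiency part; since $\mathrm{b}\sF^M$ is an algebra whose quasi-continuous versions separate the points of $E_M$ up to an $\sE$-polar set (quasi-regularity of $(\sE^M,\sF^M)$) and both bivariate measures are $\sigma$-finite and charge no $\sE$-polar set, a functional monotone-class argument — approximating indicators of $\sE$-quasi-open subsets of $E_M$ of finite $\bar\nu_M$-measure by elements of $\mathrm{b}\sF^M$ and then rectangles by tensor products — upgrades this to $\nu_{M'}|_{E_M\times E_M}\le\nu_M|_{E_M\times E_M}$.

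\textbf{Main obstacle.} The delicate step is this last one: passing from the inequality of the Dirichlet forms on positive pairs to the inequality of the bivariate measures on $E_M\times E_M$ requires that the quasi-continuous elements of $\mathrm{b}\sF^M$ be rich enough to resolve both the off-diagonal part of a bivariate smooth measure (tested by functions with disjoint quasi-supports through the identity \eqref{eq:38} for $\sE$ in terms of $\nu$) and its diagonal part (tested through the smooth marginals $\rho_M,\lambda_M$), which is where the quasi-regularity of $(\sE^M,\sF^M)$ enters essentially. An alternative that bypasses this analytic argument is to apply Corollary~\ref{COR46} directly to $\sE^M\preceq\sE^{M'}$, realising $X^M$ as a subprocess of a version of $X^{M'}$, so that $M$ is equivalent to a product $M'\cdot M''$, $E_M=E_{M''}\subset E_{M'}$, and $\nu_M=\nu_{M'}+\nu_{M''}\ge\nu_{M'}$ by the additivity of bivariate Revuz measures already used in \eqref{eq:413}; there the difficulty shifts to identifying the two realisations of $(\sE^{M'},\sF^{M'})$ and to justifying this additivity for successive killings.
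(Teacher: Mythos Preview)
Your sufficiency argument and the first half of necessity (obtaining $M'$ via Corollary~\ref{COR46} and deducing $E_M\subset E_{M'}$ $\sE$-q.e.\ from $\sF^{E_M}\subset\sF^{E_{M'}}$) are correct and coincide with the paper's proof.

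The divergence is exactly at the point you flag as the main obstacle: the inequality $\nu_{M'}|_{E_M\times E_M}\le\nu_M|_{E_M\times E_M}$. The paper does \emph{not} attempt a monotone-class argument on test functions $\tilde f\otimes\tilde g$; instead it uses the decomposition machinery once more. Since $E_M$ is $\sA$-quasi-open (Corollary~\ref{COR33}), one forms the part Dirichlet form $\sA^{E_M}$, which has the explicit description
\[
\cD(\sA^{E_M})=\sF^{E_M}\cap L^2(E_M,\bar\nu_{M'}|_{E_M}),\qquad
\sA^{E_M}(f,g)=\sE(f,g)+\nu_{M'}|_{E_M\times E_M}(\tilde f\otimes\tilde g).
\]
By Theorem~\ref{THM24} applied to $\sE^M\preceq\sA$, the intermediate form is this $\sA^{E_M}$, and $\sE^M$ is \emph{strongly subordinate} to it. Now Theorem~\ref{THM52} together with Proposition~\ref{PRO39} produces a bivariate smooth measure $\sigma'$ on $E_M\times E_M$ (the bivariate Revuz measure of the corresponding $\text{MF}_+$) with
\[
\nu_M|_{E_M\times E_M}=\nu_{M'}|_{E_M\times E_M}+\sigma',
\]
so the desired inequality is immediate. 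This is precisely your ``alternative'' route, but executed at the Dirichlet-form level: by taking the part form $\sA^{E_M}$ one never has to match two different Borel right realisations of $X^{M'}$, and the additivity is the content of Theorem~\ref{THM33}/\eqref{eq:413} rather than something to be rechecked.

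Your monotone-class approach is not wrong in principle, but the sketch is incomplete: you would need to justify that products $\tilde f\otimes\tilde g$ with non-negative $f,g\in\mathrm{b}\sF^M$ are rich enough to determine a bivariate smooth measure on $E_M\times E_M$ (separately off and on the diagonal), which requires approximating indicators of quasi-open sets of finite marginal measure in an appropriate sense. The paper's structural argument sidesteps this entirely.
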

\begin{proof}
To demonstrate the sufficiency,  we note that  Corollary~\ref{COR34} indicate that $\sA\preceq \sE$.  For convenience,  let $G:=E_{M'}$ and $\sigma:=\nu_{M'}$.  Consider $f\in \sF^M$.  We will show that $f\in \cD(\sA)$.  Since $E_M\subset G$,  $\sE$-q.e.,  and $\sigma|_{E_M\times E_M}\leq \nu_M|_{E_M\times E_M}$,  it suffices to prove that
\begin{equation}\label{eq:61}
	\int_{E_M} \tilde{f}(x)^2\left(\sigma(dx,  G\setminus E_M)+\sigma(G\setminus E_M,dx)\right)<\infty.
\end{equation}
 In fact,  it follows from $f\in \sF^M\subset \sF^{E_M}$ and \eqref{eq:39} that 
\[
	\int_{E_M}\tilde{f}(x)^2\left(\nu(dx, G\setminus E_M)+\nu(G\setminus E_M,dx)\right)\leq \int_{G\times G}\left(\tilde{f}(x)-\tilde{f}(y) \right)^2\nu(dxdy)<\infty. 
\]
Note that $\sigma|_{G\times G\setminus d}\leq \nu|_{G\times G\setminus d}$.  Thus,  we can easily derive \eqref{eq:61}.  By utilizing $E_M\subset G$, $\sE$-q.e., and $\sF^M\subset \cD(\sA)$,  it is straightforward to verify that $\mathrm{b}\sF^M$ forms an algebraic ideal in $\mathrm{b}\cD(\sA)$.  The condition  $\sigma|_{E_M\times E_M}\leq \nu_M|_{E_M\times E_M}$ also implies that $\sE^M(f,g)\geq \sA(f,g)$ for all non-negative $f,g\in \sF^M$.  

Conversely,  let $(\sA,\cD(\sA))$ be a quasi-regular Dirichlet form such that $\sE^M\preceq \sA\preceq \sE$.  By applying Theorem~\ref{THM41}  to $\sA$ and $\sE$,  we obtain an MF $M'\in \text{MF}(X)$ with $m(E\setminus E_{M'})=0$  such that $(\sA,\cD(\sA))=(\sE^{M'},\sF^{M'})$.  Let $G:=E_{M'}$ and $\sigma:=\nu_{M'}$.   It follows from $\sF^M=\sF^{E_M}\cap L^2(E_M,\bar{\nu}_M)\subset \cD(\sA)\subset \sF^G$ that  $\sF^{E_M}\subset \sF^G$.  (Note that $\sF^M$ is $\sE_1$-dense in $\sF^{E_M}$.) Thus,  $E_M\subset G$,  $\sE$-q.e.  It remains to show 
\begin{equation}\label{eq:62-2}
\sigma|_{E_M\times E_M}\leq \nu_M|_{E_M\times E_M}.
\end{equation}
  In fact,  from {Corollary~\ref{COR33}~(2)},  we know that $E_M$ is an $\sA$-quasi-open set.  Consider the part Dirichlet form of $(\sA,\cD(\sA))$ on $E_M$:
\[
\begin{aligned}
	&\cD(\sA^{E_M})=\sF^{E_M}\cap L^2(E_M,\bar{\sigma}|_{E_M}), \\
	&\sA^{E_M}(f,g)=\sE(f,g)+\sigma|_{E_M\times E_M}(\tilde{f}\otimes \tilde{g}),\quad f,g\in \cD(\sA^{E_M}).  
\end{aligned}\]
Applying Theorems~\ref{THM24} and \ref{THM41} to $\sE^M$ and $\sA$,  we find that $\sE^M$ is strongly subordinate to $\sA^{E_M}$.  Moreover,  it follows from Proposition~\ref{PRO39} and Theorem~\ref{THM52} that there exists a bivariate smooth measure $\sigma'$ on $E_M\times E_M$ with respect to $\sA^{E_M}$ such that $\nu_M|_{E_M\times E_M}=\sigma|_{E_M\times E_M}+\sigma'$.  Particularly,  \eqref{eq:62-2} is established.  This completes the proof.
\end{proof}
\begin{remark}\label{RM63}
According to Proposition~\ref{PRO39},  the sandwiched Dirichlet form $(\sA,\cD(\sA))$ can also be expressed as
\begin{equation}\label{eq:62}
\begin{aligned}
	&\cD(\sA)=\sF^G\cap L^2(G,\bar{\sigma}),\\
	&\sA(f,g)=\sE(f,g)+\sigma(\tilde f\otimes \tilde g),\quad f,g\in \cD(\sA),
\end{aligned}
\end{equation}
for some $\sE$-quasi-open set $G$ such that $E_M\subset G$,  $\sE$-q.e.,  and a bivariate smooth measure $\sigma$ on $G\times G$ with respect to $\sE^G$ such that $\sigma|_{E_M\times E_M}\leq \nu_M|_{E_M\times E_M}$.
\end{remark}

As noted in Remark~\ref{RM311},  killing by MFs reduces to perturbation for local Dirichlet forms.  Therefore, we can state the following.

\begin{corollary}\label{COR64}
Assume that $(\sE,\sF)$ is local.  Then, the quasi-regular Dirichlet form $(\sA,\cD(\sA))$ is sandwiched between $\sE$ and $\sE^M$ if and only if there exists an $\sE$-quasi-open set $G$ with $E_M\subset G$, $\sE$-q.e., and a smooth measure $\mu$ on $G$ with respect to $\sE^G$ with $\mu|_{E_M}\leq \bar{\nu}_M$ such that
\[
	\cD(\sA)=\sF^G\cap L^2(G,\mu),\quad \sA(f,g)=\sE(f,g)+\int_G\tilde{f}\tilde{g}d\mu,\;f,g\in \cD(\sA).
\]
\end{corollary}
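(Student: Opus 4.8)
The plan is to derive Corollary~\ref{COR64} as an immediate specialization of Theorem~\ref{THM62} (equivalently, of Remark~\ref{RM63}) to the local situation, using Remark~\ref{RM311}. The key point I would rely on is that locality of $(\sE,\sF)$ makes its canonical measure $\nu$ vanish, hence also the canonical measure $1_{G\times G}\cdot\nu=0$ of the part process $X^G$ on any finely open $G$; consequently every bivariate smooth measure with respect to $\sE^G$ — in particular $\nu_M|_{E_M\times E_M}$ and every $\nu_{M'}$ produced from an MF in $\text{MF}(X)$ by Theorem~\ref{THM62} — is carried by the diagonal $d$ and therefore has the form $\delta_y(dx)\kappa(dy)$, where its common left and right marginal $\kappa$ is a smooth measure with respect to $\sE^G$. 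Throughout I would freely use Lemma~\ref{LM32}~(4) to identify $\sE$-quasi-open with finely open sets, Corollary~\ref{COR42} to identify $\sE^G$-polarity with $\sE$-polarity, and Lemma~\ref{LM36} to transfer an MF between $X$ and $X^G$.

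For the ``only if'' direction, starting from $\sE^M\preceq\sA\preceq\sE$, I would apply Theorem~\ref{THM62} to get $M'\in\text{MF}(X)$ with $E_M\subset G:=E_{M'}$, $\sE$-q.e., and $\nu_{M'}|_{E_M\times E_M}\le\nu_M|_{E_M\times E_M}$, such that $(\sA,\cD(\sA))=(\sE^{M'},\sF^{M'})$. By locality $\nu_{M'}$ is carried by $d$, so with $\mu:=\rho_{M'}=\lambda_{M'}$ — a smooth measure with respect to $\sE^G$ by the discussion preceding Proposition~\ref{PRO39} applied to $X^G$ — formula \eqref{eq:34} reads
\[
	\cD(\sA)=\sF^{M'}=\sF^G\cap L^2(G,\mu),\qquad \sA(f,g)=\sE(f,g)+\int_G\tilde{f}\tilde{g}\,d\mu,
\]
using $L^2(G,\rho_{M'}+\lambda_{M'})=L^2(G,2\mu)=L^2(G,\mu)$ and $\nu_{M'}(\tilde{f}\otimes\tilde{g})=\int_G\tilde{f}\tilde{g}\,d\mu$. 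Since $\nu_{M'}|_{E_M\times E_M}$ and $\nu_M|_{E_M\times E_M}$ are both carried by $d$, with diagonal measures $\mu|_{E_M}$ and $\bar{\nu}_M$, the inequality between them becomes exactly $\mu|_{E_M}\le\bar{\nu}_M$.

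For the ``if'' direction, given an $\sE$-quasi-open $G$ with $E_M\subset G$, $\sE$-q.e., and a smooth measure $\mu$ on $G$ with respect to $\sE^G$ with $\mu|_{E_M}\le\bar{\nu}_M$, I would set $\sigma(dx\,dy):=\delta_y(dx)\mu(dy)$ on $G\times G$; then $\bar{\sigma}=\mu$ is smooth with respect to $\sE^G$ and $\sigma|_{G\times G\setminus d}=0$ does not exceed the canonical measure $1_{G\times G}\cdot\nu=0$ of $X^G$, so $\sigma$ is a bivariate smooth measure with respect to $\sE^G$. Applying Proposition~\ref{PRO39} to a Borel right process properly associated with $(\sE^G,\sF^G)$ — obtained from $X^G$ by restriction to an $m|_G$-inessential Borel set, exactly as in the proof of Theorem~\ref{THM33} — and then Lemma~\ref{LM36}, produces $M'\in\text{MF}(X)$ with $E_{M'}=G$ up to an $\sE$-polar set, $m(E\setminus E_{M'})=0$, and $\nu_{M'}=\sigma$. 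Because $\nu_{M'}|_{E_M\times E_M}$ and $\nu_M|_{E_M\times E_M}$ are carried by $d$ with diagonal measures $\mu|_{E_M}\le\bar{\nu}_M$, the hypotheses of the ``if'' part of Theorem~\ref{THM62} are met, so $(\sE^{M'},\sF^{M'})$ is sandwiched (and quasi-regular, by Corollary~\ref{COR34}), and it equals the Dirichlet form written in the statement by the computation above.

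The hard part is essentially nonexistent: the statement is a direct corollary of Theorem~\ref{THM62} together with Remark~\ref{RM311}. The only matters requiring attention are the bookkeeping between a diagonal bivariate measure and its marginal — notably the harmless factor $2$ between $\nu_{M'}(\tilde{f}\otimes\tilde{g})$ and $\int_G\tilde{f}\tilde{g}\,d(\rho_{M'}+\lambda_{M'})$, which disappears because $\rho_{M'}=\lambda_{M'}$ on the diagonal and $L^2(G,2\mu)=L^2(G,\mu)$ — and the routine passage to a Borel right process on a co-inessential subset when Proposition~\ref{PRO39} is invoked for the (possibly non-Borel) part process $X^G$, as already done repeatedly in \S3 and \S4.
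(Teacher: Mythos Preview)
Your proposal is correct and follows essentially the same approach as the paper: the corollary is derived directly from Theorem~\ref{THM62} (equivalently Remark~\ref{RM63}) combined with Remark~\ref{RM311}, using that locality forces the canonical measure $\nu$ to vanish so every bivariate smooth measure collapses to the diagonal. The paper states this in one sentence without spelling out the diagonal bookkeeping you provide, but the argument is identical.
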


We have decomposed domination into a composition of strong subordination and Silverstein extension in Theorem~\ref{THM24}.  The following corollary focuses on the case of strong subordination,  providing an explicit characterization within this setting. The complementary case of Silverstein extension will be explored in detail in the subsequent two subsections. 

\begin{corollary}
Assume that $(\sE^M,\sF^M)$ is strongly subordinated to $(\sE,\sF)$,  which implies that $E_M=E$.  Then, the quasi-regular Dirichlet form $(\sA,\cD(\sA))$ is sandwiched between $(\sE,\sF)$ and $(\sE^M,\sF^M)$ if and only if there exists a positive measure $\sigma$ on $E\times E$ with $\sigma\leq \nu_M$ such that
\[
	\cD(\sA)=\sF\cap L^2(E,\bar{\sigma}),\quad \sA(f,g)=\sE(f,g)+\sigma(\tilde{f}\otimes \tilde{g}),\; f,g\in \cD(\sA).
\]
\end{corollary}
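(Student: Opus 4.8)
The plan is to read this corollary directly off Theorem~\ref{THM62} and Remark~\ref{RM63}, specialized to the degenerate case $E_M=E$. The first step is to justify the parenthetical claim that strong subordination forces $E_M=E$: by Theorem~\ref{THM24} the intermediate form $(\tilde{\sE},\tilde{\sF})$ of the domination $\sE^M\preceq\sE$ is the closure of $\sF^M$ in $\sF$, and by Theorem~\ref{THM33} (see the discussion after Corollary~\ref{COR34}) it coincides with the part form $(\sE^{E_M},\sF^{E_M})$; strong subordination means $\tilde{\sF}=\sF$, hence $\sF^{E_M}=\sF$, which forces $E\setminus E_M$ to be $\sE$-polar. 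Since quasi-continuous versions and (bivariate) smooth measures ignore $\sE$-polar sets, I would then assume $E_M=E$ outright, so that $\sF^{E_M}=\sF$ and $\nu_M$ is a bivariate smooth measure with respect to $\sE$ itself; with this, the side conditions ``$E_M\subset E_{M'}$ $\sE$-q.e.'' and ``$\nu_{M'}|_{E_M\times E_M}\le\nu_M|_{E_M\times E_M}$'' appearing in Theorem~\ref{THM62} become simply $E_{M'}=E$ (i.e.\ $M'\in\text{MF}_+(X)$) and $\sigma:=\nu_{M'}\le\nu_M$ on $E\times E$.

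For the ``only if'' direction I would fix a Borel right process $X$ properly associated with $(\sE,\sF)$ with $(\sE^M,\sF^M)=(\sE',\sF')$ as provided by Corollary~\ref{COR46}, apply Theorem~\ref{THM62} to a sandwiched quasi-regular $(\sA,\cD(\sA))$ to obtain $M'\in\text{MF}(X)$ with $E_M\subset E_{M'}$ and $\nu_{M'}|_{E_M\times E_M}\le\nu_M|_{E_M\times E_M}$ and $(\sA,\cD(\sA))=(\sE^{M'},\sF^{M'})$, and then set $\sigma:=\nu_{M'}$. Because $E_M=E$ we get $E_{M'}=E$, so $\sF^{E_{M'}}=\sF$, and the representation \eqref{eq:34} of Theorem~\ref{THM33} yields $\cD(\sA)=\sF\cap L^2(E,\rho_{M'}+\lambda_{M'})=\sF\cap L^2(E,\bar\sigma)$ together with $\sA(f,g)=\sE(f,g)+\nu_{M'}(\tilde f\otimes\tilde g)=\sE(f,g)+\sigma(\tilde f\otimes\tilde g)$, while $\nu_{M'}|_{E_M\times E_M}\le\nu_M|_{E_M\times E_M}$ is exactly $\sigma\le\nu_M$. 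This is the asserted form.

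For the ``if'' direction I would start from a positive measure $\sigma$ on $E\times E$ with $\sigma\le\nu_M$ and first check it is bivariate smooth with respect to $\sE$: its symmetrized marginal satisfies $\bar\sigma\le\bar\nu_M$ and $\bar\nu_M$ is smooth (recorded before Proposition~\ref{PRO39}), so $\bar\sigma$ is smooth; and off the diagonal the representation \eqref{eq:46} gives $\nu_M\le\nu$ since $0\le 1_\Gamma+1_{\Gamma^c}\Phi\le 1$, whence $\sigma|_{E\times E\setminus d}\le\nu$ for the canonical measure $\nu$ of $X$. Proposition~\ref{PRO39} then furnishes $M'\in\text{MF}_+(X)$ with $\nu_{M'}=\sigma$; since $E_{M'}=E\supset E_M$ and $\nu_{M'}|_{E_M\times E_M}=\sigma\le\nu_M|_{E_M\times E_M}$, Theorem~\ref{THM62} gives $\sE^M\preceq\sE^{M'}\preceq\sE$, and Theorem~\ref{THM33} identifies $(\sE^{M'},\sF^{M'})$ with the Dirichlet form in the statement, so that form equals $(\sA,\cD(\sA))$ and is sandwiched. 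The only points needing care are the reduction $E_M=E$ via the intermediate-form description and the bivariate-smoothness verification for $\sigma$; beyond these the argument is a direct transcription of Theorem~\ref{THM62}, and I do not anticipate a genuine obstacle.
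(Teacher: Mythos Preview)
Your proposal is correct and follows the same route as the paper: specialize Theorem~\ref{THM62}/Remark~\ref{RM63} to $E_M=E$ and note that any positive $\sigma\le\nu_M$ is automatically bivariate smooth. The paper's own proof is the single sentence ``It is sufficient to observe that a positive measure $\sigma$ satisfying $\sigma\leq\nu_M$ is a bivariate smooth measure, as is $\nu_M$,'' so you have simply unpacked this in more detail (and additionally justified the parenthetical claim $E_M=E$, which the paper leaves implicit).
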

\begin{proof}
It is sufficient to observe that a positive measure $\sigma$ satisfying $\sigma \leq \nu_M$ is a bivariate smooth measure, as is $\nu_M$.
\end{proof}

\subsection{Sandwiched by Silverstein extension}

When $(\sE,\sF)$ is a Silverstein extension of $(\sE',\sF')=(\sE^M,\sF^M)$,  we have 
\[
	\nu_M|_{E_M\times E_M}\equiv 0,\quad (\sE',\sF')=(\sE^{E_M},\sF^{E_M}).
\]
The result in Theorem~\ref{THM62}  indicates that the sandwiched Dirichlet form can be expressed as $(\sA,\cD(\sA))=(\sE^{M'},\sF^{M'})$ for some $M'\in \text{MF}(X)$  such that 
\[
	E_M\subset E_{M'},\; \sE\text{-q.e.}, \quad \nu_{M'}|_{E_M\times E_M}=0.
\]
Particularly,  $\nu_{M'}$ only charges $\left((E_{M'}\setminus E_M)\times E_M\right)\cup \left(E_M\times (E_{M'}\setminus E_M) \right)$ (off diagonal) and $\{(x,x)\in d: x\in E_{M'}\setminus E_M\}$ (on diagonal).  

In the example below, we enumerate all sandwiched Dirichlet forms derived from the Dirichlet form of absorbing Brownian motion and one of its Silverstein extensions. This illustrates that the bivariate smooth measure $\sigma$ in the expression \eqref{eq:62} does not necessarily reduce to a smooth measure as stated in Corollary~\ref{COR64}.

\begin{example}
Consider the Sobolev spaces
\[
	H^1([0,1]):=\{f\in L^2([0,1]): f\text{ is absolutely continuous, and }f'\in L^2([0,1])\}
\]	
and $H^1_0([0,1]):=\{f\in H^1([0,1]): f(0)=f(1)=0\}$.  Let
\[
\begin{aligned}
	\sF&:=H^1([0,1]),\\
	\sE(f,g)&:=\frac{1}{2}\mathbf{D}(f,g)+(f(0)-f(1))(g(0)-g(1)),\quad f,g\in \sF,
\end{aligned}\]
where $\mathbf{D}(f,g):=\int_0^1f'(x)g'(x)dx$.  Additionally, define 
\[
\begin{aligned}
	\sF'&:=H^1_0([0,1]),\\
	\sE'(f,g)&:=\frac{1}{2}\mathbf{D}(f,g),\quad f,g\in \sF'.
\end{aligned}
\]
It is evident that $(\sE',\sF')$ is a regular, symmetric Dirichlet form on $L^2((0,1))$ properly associated with the absorbing Brownian motion on $(0,1)$,  and $(\sE,\sF)$ is a regular,  symmetric Dirichlet form on $L^2([0,1])$.  The canonical measure corresponding to $(\sE,\sF)$ is 
\[
	\nu=\delta_{\{(0,1)\}}+\delta_{\{(1,0)\}}.
\] 
Since the $\sE_1$-norm is equivalent to the $\mathbf{D}_1$-norm on $H^1([0,1])$,  it follows that every {single-point} set contained in $[0,1]$ is not $\sE$-polar,  all $\sE$-quasi-continuous functions are continuous,  and all $\sE$-quasi-open sets are open.
Moreover,  $(\sE',\sF')$ is quasi-regular on $L^2([0,1])$,  and $(\sE,\sF)$ is a Silverstein extension of $(\sE',\sF')$.  


Let $(\sA,\cD(\sA))$ be a quasi-regular (not necessarily symmetric) Dirichlet form on $L^2([0,1])$ that is sandwiched between $(\sE',\sF')$ and $(\sE,\sF)$.  In its representation \eqref{eq:62},  the ($\sE$-quasi-)open set $G$ has four possibilities:
\[
	(0,1),\quad [0,1),\quad (0,1], \quad [0,1].
\]
The first case, where $G=(0,1)$,  is straightforward to address: We must have $\sigma\equiv 0$ in \eqref{eq:62}.  In other words,  the sandwiched Dirichlet form is identical to $(\sE',\sF')$.  The second and third cases are analogous,  so it suffices to discuss the second case,  where $G=[0,1)$.  The bivariate smooth measure $\sigma$ on $[0,1)\times [0,1)$ satisfies
\[
	\sigma|_{(0,1)\times (0,1)}\equiv 0,\quad \sigma|_{\left([0,1)\times [0,1)\right)\setminus d}\leq \nu|_{\left([0,1)\times [0,1)\right)\setminus d}.
\]  
From these conditions, {we conclude that $\sigma$ vanishes outside $\{(0,0)\}$.}  Particularly, 
\[
\begin{aligned}
	\cD(\sA)&=\{f\in H^1([0,1]): f(1)=0\}, \\
	 \sA(f,g)&=\frac{1}{2}\mathbf{D}(f,g)+c\cdot f(0)g(0),\quad f,g\in \cD(\sA),
\end{aligned}\]
for some constant $c:=1+\sigma(\{(0,0)\})\geq 1$.

It remains to consider the final case where $G=[0,1]$.  Similarly,  $\sigma$ {vanishes outside} $\{(0,0),(0,1), (1,0), (1,1)\}$,  with $\beta_0:=\sigma(\{(0,1)\})\leq 1$ and $\beta_1:=\sigma(\{(1,0)\})\leq 1$.  Specifically,  we have
\[
\begin{aligned}
	\cD(\sA)&=H^1([0,1]),\\
	\sA(f,g)&=\sE(f,g)+\alpha_0f(0)g(0)+\alpha_1f(1)g(1)+\beta_0f(0)g(1)+\beta_1 f(1)g(0),
\end{aligned}\]
for $f,g\in H^1([0,1])$,  where $\alpha_0:=\sigma(\{(0,0)\})\geq 0$ and $\alpha_1:=\sigma(\{(1,1)\})\geq 0$.  Note that $(\sA,\cD(\sA))$ is not necessarily symmetric.  It is symmetric if and only if $\beta_0=\beta_1$($=:\beta$),  in which case $(\sA,\cD(\sA))$ admits the Beurling-Deny decomposition
\[
\begin{aligned}
	\sA(f,g)&=\frac{1}{2}\mathbf{D}(f,g)+(1-\beta)\cdot (f(0)-f(1))(g(0)-g(1)) \\
	&\qquad +(\alpha_0+\beta)f(0)g(0)+(\alpha_1+\beta)f(1)g(1),\quad f,g\in H^1([0,1]),
\end{aligned}\]
where $0\leq \beta\leq 1$.  
\end{example}

\subsection{Sandwiched by reflected Dirichlet space}

We close this section by considering a typical case examined in the existing literatures such as \cite{ACD24, KL23, S20}.  Assume further that both $(\sE',\sF')$ and $(\sE,\sF)$ are symmetric,  and $(\sE,\sF)$ is the \emph{active reflected Dirichlet space} of $(\sE',\sF')$ in the sense of \cite[Definition~6.4.4]{CF12}.  In order to maintain consistency with the notation employed in \cite{CF12}, we will (only in this subsection) denote $(\sE', \sF')$ as $(\sE, \sF)$ and refer to $(\sE, \sF)$ as $(\sE^\mathrm{ref}, \sF^\mathrm{ref}_a)$.  It is important to note that $(\sE^\mathrm{ref}, \sF^\mathrm{ref}_a)$ is a Silverstein extension of $(\sE,\sF)$;  see \cite[Theorem~6.6.3]{CF12}.

{As in Theorem~\ref{THM62}, we assume that $(\sE^{\mathrm{ref}}, \sF^{\mathrm{ref}}_a)$ is quasi-regular on $L^2(E,m)$. This is not true in general. Nevertheless, owing to the mapping $j$ constructed in Lemma~\ref{LM61}, the image Dirichlet form of $(\sE^{\mathrm{ref}}, \sF^{\mathrm{ref}}_a)$ under $j$ is quasi-regular.
In typical situations, the mapping $j$ arises naturally by enlarging the state space $E$ with an additional boundary. On this extended space, $\sF^{\mathrm{ref}}_a$ remains the active reflected Dirichlet space of $(\sE,\sF)$, while the active reflected Dirichlet form $(\sE^{\mathrm{ref}}, \sF^{\mathrm{ref}}_a)$ becomes quasi-regular.
For example, let $\Omega\subset\mathbb{R}^n$ be a bounded domain with Lipschitz boundary, and consider $(\sE,\sF)$ as the regular Dirichlet form on $L^2(\Omega)$ associated with the absorbing Brownian motion on $\Omega$. The active reflected Dirichlet form $(\sE^{\mathrm{ref}}, \sF^{\mathrm{ref}}_a)$ is not quasi-regular on $L^2(\Omega)$. However, if $(\sE,\sF)$ is viewed as a Dirichlet form on $L^2(\overline{\Omega})$, where $\overline{\Omega}$ denotes the closure of $\Omega$, then it becomes quasi-regular, and $(\sE^{\mathrm{ref}}, \sF^{\mathrm{ref}}_a)$ turns into a regular Dirichlet form on $L^2(\overline{\Omega})$, corresponding to the reflecting Brownian motion on $\overline{\Omega}$. See \cite[Example~6.6.12]{CF12} for further details.
}

Let $M$ be the MF corresponding to the domination $\sE\preceq \sE^\mathrm{ref}$.  Then, $(\sE,\sF)$ is the part Dirichlet form of $(\sE^\mathrm{ref}, \sF^\mathrm{ref}_a)$ on $E_M$.  The set $E\setminus E_M$ is precisely the ``boundary" (of $E_M$),  as considered in \cite[Definition~4.2]{KL23}.  The following result demonstrates that the Dirichlet form sandwiched between $(\sE,\sF)$ and its active reflected Dirichlet space can be expressed as a perturbation of the part Dirichlet form of $(\sE^\mathrm{ref},\sF^\mathrm{ref}_a)$ on some $\sE^\mathrm{ref}$-quasi-open set $G$ ($\supset E_M$,  $\sE^\mathrm{ref}$-q.e.),  thereby recovering the main result, Theorem~4.6, of \cite{KL23}. 

\begin{corollary}
{Assume that both $(\sE,\sF)$ and the active reflected Dirichlet form $(\sE^\mathrm{ref},\sF^\mathrm{ref}_a)$ are quasi-regular on $L^2(E,m)$.}
The quasi-regular (not necessarily symmetric) Dirichlet form $(\sA,\cD(\sA))$ is sandwiched between $(\sE,\sF)$ and $(\sE^\mathrm{ref},\sF^\mathrm{ref}_a)$ if and only if there exists an $\sE^\mathrm{ref}$-quasi-open set $G$ with $E_M\subset G$,  $\sE^\mathrm{ref}$-q.e.,  and a smooth measure $\mu$ with respect to $\sE^{\mathrm{ref},G}$ with $\mu(G\setminus E_M)=0$ such that
\[
\begin{aligned}
	&\cD(\sA)=\sF^{\mathrm{ref},G}_a\cap L^2(G,\mu),  \\
	&\sA(f,g)=\sE^\mathrm{ref}(f,g)+\int_G \tilde{f}\tilde{g}d\mu,\quad f,g\in \cD(\sA),
\end{aligned}\]
where $(\sE^{\mathrm{ref},G}, \sF^{\mathrm{ref},G}_a)$ is the part Dirichlet form of $(\sE^\mathrm{ref},\sF^\mathrm{ref}_a)$ on $G$.  Particularly,  $(\sA,\cD(\sA))$ is symmetric. 
\end{corollary}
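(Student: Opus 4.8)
The plan is to read the corollary off Theorem~\ref{THM62} (equivalently Remark~\ref{RM63}), using the special geometry of the active reflected Dirichlet space to collapse onto the diagonal the bivariate smooth measure that appears there. First I would invoke Lemma~\ref{LM61} to transport everything by a quasi-homeomorphism to a locally compact separable metric space on which $(\sE^\mathrm{ref},\sF^\mathrm{ref}_a)$ is regular while $(\sE,\sF)$ and $(\sA,\cD(\sA))$ are quasi-regular; since a quasi-homeomorphism carries canonical (= jumping, in the symmetric case) measures to canonical measures, part Dirichlet forms to part Dirichlet forms, and Silverstein extensions to Silverstein extensions, in the transported picture $(\sE^\mathrm{ref},\sF^\mathrm{ref}_a)$ is still the active reflected Dirichlet space of $(\sE,\sF)$. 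Let $X$ be a Borel right process properly associated with $(\sE^\mathrm{ref},\sF^\mathrm{ref}_a)$; by Corollary~\ref{COR46} there is $M\in\text{MF}(X)$ with $m(E\setminus E_M)=0$ and $(\sE,\sF)=(\sE^M,\sF^M)$, and because $\sE^\mathrm{ref}$ is a Silverstein extension of $\sE$ we have $\nu_M|_{E_M\times E_M}\equiv 0$ and $(\sE,\sF)=\big((\sE^\mathrm{ref})^{E_M},(\sF^\mathrm{ref}_a)^{E_M}\big)$, the part on the finely open set $E_M$.

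Now apply Theorem~\ref{THM62} with big form $(\sE^\mathrm{ref},\sF^\mathrm{ref}_a)$ and small form $(\sE,\sF)=(\sE^M,\sF^M)$: a quasi-regular $(\sA,\cD(\sA))$ satisfies $\sE\preceq\sA\preceq\sE^\mathrm{ref}$ if and only if, by Remark~\ref{RM63}, there exist an $\sE^\mathrm{ref}$-quasi-open set $G$ with $E_M\subset G$ $\sE^\mathrm{ref}$-q.e.\ and a bivariate smooth measure $\sigma$ on $G\times G$ with respect to $(\sE^\mathrm{ref})^G$ satisfying $\sigma|_{E_M\times E_M}\le\nu_M|_{E_M\times E_M}=0$, such that
\[
\cD(\sA)=(\sF^\mathrm{ref}_a)^G\cap L^2(G,\bar\sigma),\qquad \sA(f,g)=\sE^\mathrm{ref}(f,g)+\sigma(\tilde f\otimes\tilde g),\quad f,g\in\cD(\sA).
\]
Hence the entire content is the claim that, under these constraints, $\sigma$ is carried by the diagonal $d\cap(G\times G)$, i.e.\ $\sigma(dx,dy)=\delta_x(dy)\,\mu(dx)$ for a measure $\mu$ on $G$ with $\mu(E_M)=0$, and conversely that every such $\mu$ (smooth with respect to $(\sE^\mathrm{ref})^G$) arises from a bivariate smooth $\sigma$ with $\sigma|_{E_M\times E_M}=0$. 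Granting this, substituting back gives the stated formulas (with $\bar\sigma=\mu$, $(\sF^\mathrm{ref}_a)^G=\sF^{\mathrm{ref},G}_a$, $(\sE^\mathrm{ref})^G=\sE^{\mathrm{ref},G}$), and since $\sE^\mathrm{ref}$ is symmetric and the added term $\int_G\tilde f\tilde g\,d\mu$ is symmetric, $\sA$ is automatically symmetric.

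The key step is to pin down the support of $\sigma$ off the diagonal. Because $\sE^\mathrm{ref}$ is symmetric, its canonical measure — which is the canonical measure $\nu^\mathrm{ref}$ of $X$ — coincides with its jumping measure; and it is a structural property of the active reflected Dirichlet space (see \cite[\S\S6.4--6.6]{CF12}; this is the substance of \cite[Theorem~4.6]{KL23}) that this jumping measure equals the jumping measure $J$ of $(\sE,\sF)$. As $(\sE,\sF)$ is properly associated with the part process $X^{E_M}$, whose canonical measure is $1_{E_M\times E_M}\cdot\nu^\mathrm{ref}$, we get $J=1_{E_M\times E_M}\cdot\nu^\mathrm{ref}$, hence $\nu^\mathrm{ref}=J$ is concentrated on $E_M\times E_M$, and a fortiori so is the canonical measure $1_{G\times G}\cdot\nu^\mathrm{ref}$ of $X^G$. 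By the definition of a bivariate smooth measure, $\sigma|_{(G\times G)\setminus d}\le 1_{G\times G}\cdot\nu^\mathrm{ref}$, which is carried by $(E_M\times E_M)\setminus d$; but $\sigma|_{E_M\times E_M}=0$, so $\sigma|_{(G\times G)\setminus d}=0$. Thus $\sigma$ lives on $d\cap(G\times G)$; letting $\mu$ be the corresponding measure on $G$ we have $\sigma_l=\sigma_r=\bar\sigma=\mu$, so $\mu$ is smooth with respect to $(\sE^\mathrm{ref})^G$, and the restriction of $\sigma$ to $\{(x,x):x\in E_M\}$ is dominated by $\sigma|_{E_M\times E_M}=0$, giving $\mu(E_M)=0$. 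The converse is immediate: given $G$ and such a $\mu$, the diagonal measure $\sigma(dx,dy):=\delta_x(dy)\mu(dx)$ has $\sigma|_{(G\times G)\setminus d}=0\le 1_{G\times G}\cdot\nu^\mathrm{ref}$ and $\bar\sigma=\mu$ smooth, hence is bivariate smooth with respect to $(\sE^\mathrm{ref})^G$, and $\sigma|_{E_M\times E_M}=0$; Theorem~\ref{THM62} then yields that $(\sA,\cD(\sA))$ given by the displayed formulas is a quasi-regular Dirichlet form with $\sE\preceq\sA\preceq\sE^\mathrm{ref}$.

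The main obstacle is exactly the structural fact that passing to the active reflected Dirichlet space creates no new jumps to or from the adjoined boundary $E\setminus E_M$, i.e.\ that $\nu^\mathrm{ref}$ does not charge $\big((E\setminus E_M)\times E\big)\cup\big(E\times(E\setminus E_M)\big)$ off the diagonal; everything else is bookkeeping on top of Theorem~\ref{THM62} and Remark~\ref{RM63}. A secondary point demanding some care is verifying that the quasi-homeomorphic reduction of Lemma~\ref{LM61} genuinely preserves the hypothesis ``$\sE^\mathrm{ref}$ is the active reflected Dirichlet space of $\sE$'' (and not merely ``$\sE^\mathrm{ref}$ dominates $\sE$''), which I would settle by recalling that the reflected Dirichlet space admits an intrinsic description — as the largest Silverstein extension, or via $\sF^\mathrm{ref}_{\mathrm{loc}}$ and the energy measure — that is manifestly transported by a quasi-homeomorphism.
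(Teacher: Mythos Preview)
Your route is the paper's route: apply Theorem~\ref{THM62}/Remark~\ref{RM63} to obtain an $\sE^\mathrm{ref}$-quasi-open $G\supset E_M$ and a bivariate smooth $\sigma$ with $\sigma|_{E_M\times E_M}=0$, and then argue that $\sigma$ is forced onto the diagonal because the canonical (jumping) measure $\tilde J$ of $(\sE^\mathrm{ref},\sF^\mathrm{ref}_a)$ puts no mass outside $E_M\times E_M$. The difference is in how that last fact is established. You invoke it as a ``structural property'' with references to \cite{CF12} and \cite[Theorem~4.6]{KL23}; the second citation is circular, since that is exactly the result the corollary is recovering, and the paper does not regard the fact as readable straight off \cite{CF12} either. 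Instead the paper supplies an argument: from the definition of the reflected Dirichlet space one has $\sE^\mathrm{ref}(f,f)=\widehat{\sE}(f,f)$ with $\widehat{\sE}$ assembled from the Beurling--Deny data $(\mu^c_{\langle\cdot\rangle},J,\kappa)$ of $(\sE,\sF)$; the real work (culminating in \eqref{eq:65}) is to check that the would-be diffusion part $\widehat{\sE}^{(c)}(f,g):=\tfrac12\mu^c_{\langle f,g\rangle}(E_M)$ is strongly local on $\sF^\mathrm{ref}_a$, after which uniqueness of the Beurling--Deny decomposition forces $\tilde J=J$, hence $\tilde J$ is carried by $E_M\times E_M\setminus d$. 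So you have correctly located the obstacle, but the step you outsource is precisely the one the paper proves.

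(Incidentally, you deduce $\mu(E_M)=0$, which is what actually follows from $\sigma|_{E_M\times E_M}=0$ once $\sigma$ is diagonal; the paper writes $\mu(G\setminus E_M)=0$ in both statement and proof, which appears to be a slip.)
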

\begin{proof}
It suffices to prove the necessity.  For convenience,  we take every function to be its $\sE^\mathrm{ref}$-quasi-continuous $m$-version.   According to, e.g.,  \cite[Proposition~6.4.1]{CF12},  $(\sE^\mathrm{ref},\sF^\text{ref}_a)$ admits the Beurling-Deny decomposition as follows: For $f\in \sF^\mathrm{ref}_a$,  
\[
	\sE^\mathrm{ref}(f,f)=\frac{1}{2}\tilde{\mu}^c_{\langle f\rangle}(E)+\frac{1}{2}\int_{E\times E\setminus d}(f(x)-f(y))^2\tilde J(dxdy)+\int_{E}f(x)^2\tilde \kappa(dx),
\]
where $\tilde{\mu}^c_{\langle f\rangle}$ denotes the energy measure defined in \cite[(4.3.8)]{CF12}.  For $f\in \sF$,  we have
\[
\begin{aligned}
	\sE(f,f)&=\sE^\mathrm{ref}(f,f) \\
	&=\frac{1}{2}\tilde{\mu}^c_{\langle f\rangle}(E)+\frac{1}{2}\int_{E_M\times E_M\setminus d}(f(x)-f(y))^2 J(dxdy)+\int_{E_M}f(x)^2 \kappa(dx),
\end{aligned}\]
where 
\[
J:=\tilde{J}|_{E_M\times E_M\setminus d},\quad \kappa:=\left(\tilde\kappa+\tilde J(dx, E\setminus E_M)\right)\big|_{E_M}.
\]
Thus,  $J$ is the jumping measure of $(\sE,\sF)$ and $\kappa$ serves as its killing measure.  Utilizing \cite[(4.3.34)]{CF12},  we further conclude that 
\begin{equation}\label{eq:64}
	\mu^c_{\langle f\rangle}=\tilde{\mu}^c_{\langle f\rangle}\big|_{E_M},\quad f\in \sF,
\end{equation}
where $\mu_{\langle f\rangle}^c$ denotes the energy measure of $f\in \sF$ for the Dirichlet form $(\sE,\sF)$. 
 Let $\overset{\circ}{\sF}_\mathrm{loc}$ denote the local Dirichlet space of $\sF$,  as defined in \cite[(4.3.31)]{CF12}.  The measure $\mu^c_{\langle f\rangle}$ is well-defined for each $f\in \overset{\circ}{\sF}_\mathrm{loc}$ using \cite[Theorem~4.3.10]{CF12}.  According to the definition of the reflected Dirichlet space (see \cite[Definition~6.4.4]{CF12}),  we have
\[
	\sF^\mathrm{ref}_a=\{f\in \overset{\circ}{\sF}_\mathrm{loc}\cap L^2(E,m):\widehat{\sE}(f,f)<\infty\},
\]
where for $f\in \overset{\circ}{\sF}_\mathrm{loc}$,
\[
	\widehat{\sE}(f,f):=\frac{1}{2}\mu^c_{\langle f\rangle}(E_M)+\frac{1}{2}\int_{E_M\times E_M\setminus d}(f(x)-f(y))^2 J(dxdy)+\int_{E_M}f(x)^2 \kappa(dx),
\]
and $\sE^\mathrm{ref}(f,f)=\widehat{\sE}(f,f)$ for $f\in \sF^\mathrm{ref}_a$. 
We will prove that
\begin{equation}\label{eq:63}
\begin{aligned}	\widehat{\sE}^{(c)}(f,g)&:=\frac{1}{2}\mu^{c}_{\langle f,g\rangle}(E_M) \\
&=\frac{1}{8}\left(\mu^c_{\langle f+g\rangle}(E_M)-\mu^c_{\langle f-g\rangle}(E_M) \right),\quad f,g\in \sF^\mathrm{ref}_a\subset \overset{\circ}{\sF}_\mathrm{loc}
\end{aligned}\end{equation}
is strongly local in the sense of \cite[Proposition~6.4.1~(i)]{CF12}.  Once this is established,  the uniqueness of Beurling-Deny decomposition for $(\sE^\mathrm{ref},\sF^\text{ref}_a)$ implies that
\[
	\tilde J(E_M,E\setminus E_M)=\tilde J(E\setminus E_M,E_M)=\tilde J\left((E\setminus E_M)\times (E\setminus E_M)\setminus d\right)=0.
\]
Note that $\tilde J$ is the canonical measure of $(\sE^\mathrm{ref},\sF^\mathrm{ref}_a)$.  Consequently,  the bivariate smooth measure $\sigma$ in the expression \eqref{eq:62} for $(\sA,\cD(\sA))$ reduces to a smooth measure $\mu$ such that $\mu(G\setminus E_M)=0$, since $\sigma|_{E_M\times E_M}=0$.

It remains to demonstrate that \eqref{eq:63} is strongly local.  Let $F$ denote the $\sE^\mathrm{ref}$-quasi-support of $f$, and assume that $g$ is constant in an $\sE^\mathrm{ref}$-quasi-open set $B$ with $F\subset B$. Consider a sequence of increasing $\sE$-quasi-open sets $\{G_n: n\geq 1\}$ such that $\bigcup_{n\geq 1}G_n=E_M$,  $\sE$-q.e.,  along with two sequences of functions $\{f_n: n\geq 1\}$ and $\{g_n:n\geq 1\}$ in $\sF$ such that $f_n=f$ and $g_n=g$,  $\sE$-q.e.  on $G_n$ for each $n\geq 1$.  We aim to prove that
\begin{equation}\label{eq:65}
	\mu^{c}_{\langle f_n,g_n\rangle}(G_n)=0 ,\quad n\geq 1.
\end{equation}
Corollary~\ref{COR33}~(2) indicates that $G_n\setminus F$ is $\sE^\mathrm{ref}$-quasi-open.  
Given that $\mu^c_{\langle f_n\rangle}|_{G_n}=\mu^c_{\langle f\rangle}|_{G_n}$,  it follows from \eqref{eq:64} and \cite[Proposition~4.3.1~(ii)]{CF12} that
\[
	\mu^c_{\langle f_n\rangle}(G_n\setminus F)=\tilde{\mu}^c_{\langle f\rangle}(G_n\setminus F)=0.
\]
Thus,  we have 
\[
	\left|\mu^c_{\langle f_n,g_n\rangle}(G_n\setminus F)\right|^2\leq \mu^c_{\langle f_n\rangle}(G_n\setminus F)\mu^c_{\langle g_n\rangle}(G_n\setminus F)=0.
\]
Since $g$ is constant on $G_n\cap B$ ($\supset G_n\cap F$),  we can similarly obtain that $\mu^c_{\langle g_n\rangle}(G_n\cap B)=\tilde{\mu}^c_{\langle g\rangle}(G_n\cap B)=0$,  which further implies $\mu^c_{\langle f_n,g_n\rangle}(G_n\cap F)=0$.  Therefore,  \eqref{eq:65} is established.  This completes the proof.
\end{proof} 

\section{Application to the Laplacian with Robin boundary conditions}\label{SEC7}

Let $\Omega\subset \bR^n$ (with $n\geq 1$) be a non-empty open set with the boundary $\Gamma:=\overline{\Omega}\setminus \Omega$. Consider the Sobolev space 
\[
	H^1(\Omega):=\{u\in L^2(\Omega): D_j u\in L^2(\Omega),j=1,2,\cdots,n\}
\]
with the norm 
\[
	\|u\|^2_{H^1(\Omega)}:=\|u\|^2_{L^2(\Omega)}+\sum_{j=1}^n \|D_ju\|^2_{L^2(\Omega)},
\]
where $D_ju=\frac{\partial u}{\partial x_j}$ represents the distributional derivative. For $u,v\in H^{1}(\Omega)$, define
\[
	\bD(u,v):=\sum_{j=1}^{n}\int_{\Omega}D_{j}u(x)D_{j}v(x)dx.
\] 
Moreover, we let 
\[
	\tilde{H}^{1}(\Omega):=\overline{H^{1}(\Omega)\cap C(\overline{\Omega})}^{H^{1}(\Omega)},
\]
where $C(\overline{\Omega})$ denotes the space of all continuous real-valued functions on $\overline{\Omega}$, and 
\[
	H^{1}_{0}(\Omega):=\overline{C_c^{\infty}(\Omega)}^{H^{1}(\Omega)},
\]
where $C_{c}^{\infty}(\Omega)$ denotes the space of all infinitely differential functions on $\Omega$ with compact support. 

It is well known that $(\bD,\tilde{H}^{1}(\Omega))$ is a regular, symmetric Dirichlet form on $L^{2}(\overline{\Omega})$,  while $(\bD, H^{1}_{0}(\Omega))$ is a regular, symmetric Dirichlet form on $L^2(\Omega)$; see, e.g., \cite[\S2]{A03}. Denote by  $\overline{X}$ and $X^0$ the Hunt processes on $\overline\Omega$ and ${\Omega}$ corresponding to these forms, respectively.  For convenience, let $\text{Cap}$ be the $1$-capacity of $(\bD,\tilde{H}^{1}(\Omega))$,  and if the form symbol preceding the quasi-notion is omitted, it defaults to the quasi-notion corresponding to $(\bD,\tilde{H}^{1}(\Omega))$. For instance, $\tilde{u}\in \tilde{H}^1(\Omega)$ denotes the quasi-continuous version of $u$ with respect to $(\bD,\tilde{H}^{1}(\Omega))$, and the statement $\tilde{u}=0$ q.e. on $\Gamma$ asserts that $\tilde{u}$ is identically equal to $0$ on $\Gamma$ outside some polar set with respect to $(\bD,\tilde{H}^{1}(\Omega))$.

\subsection{Local Robin boundary}

In this subsection,  we aim to investigate the (not necessarily symmetric) Dirichlet form $(\sE,\sF)$ on $L^2(\Omega)$ ($=L^2(\overline{\Omega})$) that is sandwiched between $(\bD, H^{1}_{0}(\Omega))$ and $(\bD,\tilde{H}^{1}(\Omega))$,  i.e.,
\begin{equation}\label{eq:72}
(\bD, H^{1}_{0}(\Omega))\preceq (\sE,\sF)\preceq (\bD,\tilde{H}^{1}(\Omega)).
\end{equation}
 To avoid trivial cases, we always assume that $H^1_0(\Omega)\neq \tilde H^1(\Omega)$,  meaning that $\Gamma$ is not polar (see,  e.g., \cite[Proposition~2.5]{A03}). 
 
 \begin{definition}\label{DEF71}
A positive Borel measure $\mu$ on $\Gamma_\mu\subset \Gamma$ is called \emph{quasi-admissible} if it satisfies the following conditions:
 \begin{itemize}
 \item[(1)] $B_\mu:=\Gamma\setminus \Gamma_\mu$ is quasi-closed.
 \item[(2)] $\mu$,  regarded as a measure on $\overline{\Omega}\setminus B_\mu$ with $\mu(\Omega):=0$,  is smooth with respect to the part Dirichlet form of $(\bD,\tilde{H}^1(\Omega))$  on $G_\mu:=\overline{\Omega}\setminus B_\mu$. 
\end{itemize}
 \end{definition}
 \begin{remark}
It is important to emphasize that the defining set $\Gamma_\mu$ of $\mu$ plays a crucial role in determining admissibility. For two admissible measures, even if their trivial extensions on $\overline{\Omega}$ are identical, the Dirichlet forms defined by these measures in \eqref{eq:73} will differ if their defining sets are not q.e.  equal.
 \end{remark}
 
 For any quasi-admissible measure $\mu$ on $\Gamma_\mu$,  we define a quadratic form on $L^2(\overline{\Omega})$ as follows:
\begin{equation}\label{eq:73}
\begin{aligned}
	&\sF^{\mu}:=\left\{\tilde u\in \tilde H^1(\Omega): \tilde u=0\text{ q.e. on }B_\mu,\int_{\Gamma_\mu} \tilde u(x)^2\mu(dx)<\infty\right\}, \\
	&\sE^{\mu}(\tilde u,\tilde v):=\bD(\tilde u,\tilde v)+\int_{\Gamma_\mu} \tilde u(x)\tilde v(x)\mu(dx),\quad \tilde u,\tilde v\in \sF^{\mu}.
\end{aligned}
\end{equation}
These forms encapsulate all Dirichlet forms that are sandwiched between $(\bD, H^{1}_{0}(\Omega))$ and $(\bD,\tilde{H}^{1}(\Omega))$,  as demonstrated in the following result.  It should be emphasized that the quasi-regularity of $(\sE,\sF)$ is not essential to this characterization,  as we explained in Lemma~\ref{LM61}. 

\begin{theorem}\label{THM73}
A quasi-regular (not necessarily symmetric) Dirichlet form $(\sE,\sF)$ on $L^2(\overline \Omega)$ is sandwiched between $(\bD, H^{1}_{0}(\Omega))$ and $(\bD,\tilde{H}^{1}(\Omega))$ if and only if there exists a quasi-admissible measure $\mu$ such that $(\sE,\sF)=(\sE^\mu,\sF^\mu)$. 
\end{theorem}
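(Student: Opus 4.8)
The plan is to specialise the machinery of \S\ref{SEC5} and \S\ref{SEC7} to the dominating form $(\sE,\sF)=(\bD,\tilde H^1(\Omega))$ and the dominated form $(\sE^M,\sF^M)=(\bD,H^1_0(\Omega))$, read off Corollary~\ref{COR64}, and then translate its data into the language of Definition~\ref{DEF71}. The two standing facts I would invoke are: (a) $H^1_0(\Omega)=\{u\in\tilde H^1(\Omega):\tilde u=0\text{ q.e. on }\Gamma\}$ (see, e.g., \cite{A03}), so that by Lemma~\ref{LM42} the part process $\overline X^\Omega$ of a Hunt process $\overline X$ properly associated with $(\bD,\tilde H^1(\Omega))$ is properly associated with $(\bD,H^1_0(\Omega))$; and (b) $(\bD,\tilde H^1(\Omega))$ is local, so $\overline X$ has continuous paths. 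Take $M:=1_{[0,D_\Gamma\wedge\zeta)}\in\text{MF}(\overline X)$; then $E_M=\Omega$, $m(\overline\Omega\setminus E_M)=0$, and $(\sE^M,\sF^M)=(\bD,H^1_0(\Omega))$. Since $M$ kills only at the exit from $\Omega$ and $\overline X$ is path-continuous, the bivariate Revuz measure $\nu_M$ is, via \eqref{eq:46} and the locality (b), carried by the diagonal over $\Gamma$; in particular $\nu_M|_{\Omega\times\Omega}=0$ and $\bar\nu_M|_{E_M}=0$. We are thus exactly in the framework of Corollary~\ref{COR64}.

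\emph{Necessity.} Let $(\sE,\sF)$ be quasi-regular with $(\bD,H^1_0(\Omega))\preceq(\sE,\sF)\preceq(\bD,\tilde H^1(\Omega))$. Corollary~\ref{COR64} (with $(\bD,\tilde H^1(\Omega))$ as the dominating form) yields a $\bD$-quasi-open set $G$ with $E_M=\Omega\subset G$ q.e., together with a measure $\mu$ smooth with respect to the part Dirichlet form $(\bD,\tilde H^1(\Omega))^G$ and with $\mu|_{E_M}\le\bar\nu_M|_{E_M}=0$, hence $\mu(\Omega)=0$, such that
\[
\sF=\{\tilde u\in\tilde H^1(\Omega):\tilde u=0\text{ q.e. on }\overline\Omega\setminus G\}\cap L^2(G,\mu),\qquad \sE(u,v)=\bD(u,v)+\int_G\tilde u\,\tilde v\,d\mu .
\]
As $\Omega$ is open (hence $\bD$-quasi-open) and $\Omega\setminus G$ is $\bD$-polar and $\mu$-null, we may replace $G$ by $G\cup\Omega$ without altering either factor above, so we may assume $\Omega\subset G$. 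Put $B_\mu:=\overline\Omega\setminus G\subset\Gamma$, $\Gamma_\mu:=\Gamma\setminus B_\mu$ and $G_\mu:=G$. Then $B_\mu$ is quasi-closed, and since $\mu(\Omega)=0$ the measure $\mu$, viewed on $\Gamma_\mu$ with $\mu(\Omega):=0$, is smooth with respect to $(\bD,\tilde H^1(\Omega))^{G_\mu}$; thus $\mu$ is quasi-admissible. Finally $L^2(G,\mu)=L^2(\Gamma_\mu,\mu)$ and $\{\tilde u:\tilde u=0\text{ q.e. on }\overline\Omega\setminus G\}=\{\tilde u:\tilde u=0\text{ q.e. on }B_\mu\}$, so $(\sE,\sF)=(\sE^\mu,\sF^\mu)$ as defined in \eqref{eq:73}.

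\emph{Sufficiency.} Let $\mu$ be quasi-admissible on $\Gamma_\mu$, so $G_\mu:=\overline\Omega\setminus B_\mu\supset\Omega$ is $\bD$-quasi-open and $\mu$ (with $\mu(\Omega):=0$) is smooth with respect to $(\bD,\tilde H^1(\Omega))^{G_\mu}$. Let $A^\mu$ be the PCAF of $\overline X^{G_\mu}$ with Revuz measure $\mu$ and set $M':=e^{-A^\mu}\cdot 1_{[0,D_{B_\mu}\wedge\zeta)}$. By Lemma~\ref{LM36}, $M'\in\text{MF}(\overline X)$ with $E_{M'}=G_\mu$, and by Theorem~\ref{THM33} together with Remark~\ref{RM311} (locality of $\bD$) the subprocess $\overline X^{M'}$ is properly associated with the quasi-regular Dirichlet form obtained by perturbing $(\bD,\tilde H^1(\Omega))^{G_\mu}$ by $\mu$, which by Remark~\ref{RM311} is precisely $(\sE^\mu,\sF^\mu)=(\sE^{M'},\sF^{M'})$. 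From \eqref{eq:46}, $\nu_{M'}=\delta_y(dx)\,\mu(dy)$ on $G_\mu\times G_\mu$, so $\nu_{M'}|_{\Omega\times\Omega}=0=\nu_M|_{\Omega\times\Omega}$, while $E_M=\Omega\subset G_\mu=E_{M'}$ q.e. Theorem~\ref{THM62} (equivalently the ``if'' part of Corollary~\ref{COR64}) then gives $(\bD,H^1_0(\Omega))\preceq(\sE^\mu,\sF^\mu)\preceq(\bD,\tilde H^1(\Omega))$, with $(\sE^\mu,\sF^\mu)$ quasi-regular; and, by Lemma~\ref{LM61}, the characterisation persists even when $(\sE,\sF)$ is not a priori quasi-regular.

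The step I expect to be the main obstacle is not a single deep estimate but rather the \emph{identification} $H^1_0(\Omega)=\{\tilde u\in\tilde H^1(\Omega):\tilde u=0\text{ q.e. on }\Gamma\}$ — this is what turns $(\bD,H^1_0(\Omega))$ into a genuine part Dirichlet form and simultaneously forces $E_M=\Omega$ and $\bar\nu_M|_{E_M}=0$ — together with the bookkeeping matching the pair $(G,\mu)$ produced by Corollary~\ref{COR64} with the triple $(B_\mu,\Gamma_\mu,\mu)$ of Definition~\ref{DEF71}: one must verify that enlarging $G$ by the polar set $\Omega\setminus G$ leaves the form unchanged, that $\mu(\Omega)=0$ so that $\mu$ genuinely lives on $\Gamma_\mu$, and that smoothness of $\mu$ relative to $(\bD,\tilde H^1(\Omega))^{G_\mu}$ is exactly condition~(2) of Definition~\ref{DEF71}.
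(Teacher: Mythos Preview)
Your proposal is correct and follows essentially the same route as the paper: necessity is Corollary~\ref{COR64} specialised to $E_M=\Omega$, and sufficiency is the standard perturbation/part-process argument. The paper's own proof is a two-sentence citation (Corollary~\ref{COR64} for necessity, \cite[Theorems~3.3.8 and 5.1.5]{CF12} plus the argument of Corollary~\ref{COR34} for sufficiency), whereas you have unpacked the bookkeeping in full --- in particular the identification $H^1_0(\Omega)=\{\tilde u\in\tilde H^1(\Omega):\tilde u=0\text{ q.e.\ on }\Gamma\}$, the adjustment $G\rightsquigarrow G\cup\Omega$, and the translation $(G,\mu)\leftrightarrow(B_\mu,\Gamma_\mu,\mu)$ --- all of which the paper leaves implicit.
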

\begin{proof}
For a quasi-admissible measure $\mu$,  $(\sE^\mu,\sF^\mu)$ is a quasi-regular Dirichlet form on $L^2(G_\mu)$ according to, e.g., \cite[Theorems~3.3.8 and 5.1.5]{CF12}.   It is also quasi-regular on $L^2(\overline{\Omega})$ for the same reason explained in the proof of Corollary~\ref{COR34}.  Conversely, the opposing statement is a consequence of Corollary~\ref{COR64}. 
\end{proof}

If $(\sE,\sF)$ is merely a coercive closed form that satisfies \eqref{eq:72},  then the Ouhabaz's domination criterion (the first statement of Lemma~\ref{LM21}) indicates that it must satisfy the Markovian property,  meaning that $(\sE,\sF)$ is automatically a Dirichlet form. Particularly,  we have the following.

\begin{corollary}\label{COR73}
Let $(\sE,\sF)$ be a coercive closed form on $L^2(\Omega)$ that satisfies \eqref{eq:72}.  Then $(\sE,\sF)$ is a local and symmetric Dirichlet form.
\end{corollary}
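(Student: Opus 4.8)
The plan is to first promote $(\sE,\sF)$ from a coercive closed form to a genuine Dirichlet form, and then to read off symmetry and locality from the explicit description of sandwiched forms already obtained in Theorem~\ref{THM73}.

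First I would pin down the meaning of \eqref{eq:72}: for a coercive closed form not yet known to be Markovian, the relation has to be understood, via Ouhabaz's criterion (the first assertion of Lemma~\ref{LM21}), as $e^{t\Delta^D}f\le T_t f\le e^{t\Delta^N}f$ for all $t\ge 0$ and $f\in\mathrm{p}L^2(\Omega)$, where $(T_t)_{t\ge0}$ is the $L^2$-semigroup of $(\sE,\sF)$. Since both $(e^{t\Delta^D})_{t\ge0}$ and $(e^{t\Delta^N})_{t\ge0}$ are positivity preserving and sub-Markovian, the bracketing gives $T_tf\ge e^{t\Delta^D}f\ge 0$ whenever $f\ge 0$, and $0\le e^{t\Delta^D}f\le T_tf\le e^{t\Delta^N}f\le 1$ whenever $0\le f\le 1$. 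Hence $(T_t)_{t\ge0}$ is positivity preserving and sub-Markovian, and by the Beurling--Deny theory $(\sE,\sF)$ is a (non-symmetric) Dirichlet form. In particular \eqref{eq:72} now holds in the genuine sense of domination of Dirichlet forms.

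Next I would apply Theorem~\ref{THM73} — whose characterization, as noted there, does not need the quasi-regularity of $(\sE,\sF)$, being insensitive to the transfer procedure of Lemma~\ref{LM61} (alternatively one may invoke Corollary~\ref{COR64} directly, since $\bD$ is strongly local and $(\bD,H^1_0(\Omega))$ is the part Dirichlet form of the local form $(\bD,\tilde H^1(\Omega))$ on $\Omega$). This produces a quasi-admissible measure $\mu$ on some $\Gamma_\mu\subset\Gamma$ with $(\sE,\sF)=(\sE^\mu,\sF^\mu)$, that is,
\[
\sE(\tilde u,\tilde v)=\bD(\tilde u,\tilde v)+\int_{\Gamma_\mu}\tilde u(x)\tilde v(x)\,\mu(dx),\qquad \tilde u,\tilde v\in\sF^\mu .
\]
Symmetry of $(\sE,\sF)$ is then immediate, as $\bD$ is symmetric and the perturbation term is symmetric in $(\tilde u,\tilde v)$. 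For locality I would take $u,v\in\sF$ with disjoint closed supports: then $D_ju(x)D_jv(x)=0$ for a.e.\ $x\in\Omega$ (an $H^1$-function and its distributional gradient vanish a.e.\ off its support, and the two supports cover $\Omega$ between their complements), so $\bD(u,v)=0$; and since the quasi-admissible $\mu$ charges no polar sets, $\tilde u\,\tilde v=0$ $\mu$-a.e., so the perturbation term vanishes as well. Hence $\sE(u,v)=0$, i.e.\ $(\sE,\sF)$ is local.

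The step I expect to require the most care is the quasi-regularity issue: Theorem~\ref{THM73} and Corollary~\ref{COR64} are stated for quasi-regular Dirichlet forms, whereas here $(\sE,\sF)$ is only assumed to be a coercive closed form caught in the sandwich, so nothing a priori guarantees quasi-regularity. This is circumvented through Lemma~\ref{LM61}: one passes to a quasi-homeomorphic regular copy of the entire sandwich, applies the characterization there, and transfers the conclusion back, using that both symmetry and locality are preserved under quasi-homeomorphisms.
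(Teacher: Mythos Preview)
Your proposal is correct and follows essentially the same route as the paper: first use the semigroup sandwich to deduce that $(T_t)$ is positivity preserving and sub-Markovian, hence $(\sE,\sF)$ is a Dirichlet form; then invoke Lemma~\ref{LM61} to bypass the quasi-regularity hypothesis and apply the characterization of Theorem~\ref{THM73} (equivalently Corollary~\ref{COR64}), from which symmetry and locality are read off. The paper's one-line proof only makes explicit the point that locality survives the transfer by $j$, while you additionally verify locality directly from the formula for $(\sE^\mu,\sF^\mu)$; both are fine.
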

\begin{proof}
It is sufficient to note that the transformation $j$ in Lemma~\ref{LM61} keeps the local property of Dirichlet forms. 
\end{proof}
\begin{remark}
In \cite{AW03},  the authors presented Example 4.5 to demonstrate that the locality condition in \cite[Theorem~4.1]{AW03} is indispensable.  However,  upon closer inspection, the example fails to substantiate this claim because the associated $L^2$-semigroup $(T_t)_{t\geq 0}$ in this example is not positive; therefore, the Ouhabaz's domination criterion is not applicable in this context.  
 In fact,  the locality condition in \cite[Theorem~4.1]{AW03} can indeed be omitted,  as we will explain in Example~\ref{EXA75} that the admissible measures in \cite{AW03} are actually quasi-admissible.  Readers are also referred to \cite[\S3]{K18} for this correction. 
\end{remark}

Let us examine the \emph{admissible measure},  which has been investigated in various works,  including \cite{K18, A03, AW03}.  It will be demonstrated that all admissible measures are,  in fact, quasi-admissible.

\begin{example}\label{EXA75}
For a positive Borel measure $\mu$ on $\Gamma$,  let
 \[
 \Gamma_\mu:=\{z\in \Gamma: \exists r>0\text{ such that }\mu(\Gamma\cap B(z,r))<\infty\},
 \]
 where $B(z,r):=\{z\in \bR^n: |z|<r\}$.  In other words,  $\Gamma_\mu$ is the part of $\Gamma$ on which $\mu$ is locally finite.  The measure $\mu$ is called \emph{admissible} if $\text{Cap}(A)=0$ implies $\mu(A)=0$ for any Borel set $A\subset \Gamma_\mu$.  The main result,  Theorem~4.1,  of \cite{AW03} states that if $\mu$ is admissible, then
 \begin{equation}\label{eq:77}
 \begin{aligned}
	&\cD(a_\mu):=\left\{\tilde u\in \tilde H^1(\Omega): \tilde u=0\text{ q.e. on }\Gamma\setminus \Gamma_\mu,\int_{\Gamma_\mu} \tilde u(x)^2\mu(dx)<\infty\right\}, \\
	&a_{\mu}(\tilde u,\tilde v):=\bD(\tilde u,\tilde v)+\int_{\Gamma_\mu} \tilde u(x)\tilde v(x)\mu(dx),\quad \tilde u,\tilde v\in \cD(a_{\mu})
\end{aligned}
 \end{equation}
 is a Dirichlet form that satisfies \eqref{eq:72}. 
 
 In fact,  $Y:=\Omega\cup \Gamma_\mu$ is clearly an open subset of $\overline{\Omega}$,  and $\mu$ is a Radon measure on $Y$.  If $\mu$ is admissible,  then $\mu$ is a Radon smooth measure with respect to the part Dirichlet form of $(\bD,\tilde{H}^{1}(\Omega))$ on $Y$.  According to \cite[Theorems~3.3.9 and 5.1.6]{CF12},  $(a_\mu, \cD(a_\mu))$ is a regular Dirichlet form on $L^2(Y)$.  Particularly,  $B_\mu:=\Gamma\setminus \Gamma_\mu=\overline{\Omega}\setminus Y$ is closed and, consequently,  quasi-closed.  Therefore,  we conclude that $\mu$, when restricted to $\Gamma_\mu$,  is quasi-admissible. 
\end{example}
\begin{remark}
Consider the case $n\geq 2$, where each single-point subset of $\Gamma$ is a polar set.   If $\mu$ is a smooth measure with $\text{supp}[\mu]=\Gamma$,   it is always possible to construct another smooth measure $\mu'$,  which is equivalent to $\mu$,  such that $\mu'$ is nowhere Radon on $\Gamma$ in the sense that $\mu'(U)=\infty$ for any non-empty open subset $U$ of $\Gamma$ (see \cite[IV, Theorem~4.7]{MR92}).  Note that $\mu'$ with $\Gamma_{\mu'}=\Gamma$ is quasi-admissible,  and $(\sE^{\mu'},\sF^{\mu'})$, defined as \eqref{eq:73}, is a quasi-regular Dirichlet form on $L^2(\overline{\Omega})$, which differs from $(\bD,H^1_0(\Omega))$.  However,  the admissibility argument in this example leads to $(a_{\mu'},\cD(a_{\mu'}))=(\bD,H^1_0(\Omega))$ because the part of $\Gamma$, on which $\mu'$ is locally finite, is empty.
\end{remark}

We close this subsection by introducing a method for deriving a quasi-admissible measure from a positive Borel measure on $\Gamma$ that does not charge any polar sets. 

Let $\mu$ be a positive measure on $(\Gamma,\sB(\Gamma))$, which can be extended to $\overline{\Omega}$ in the way that $\mu(\Omega):=0$.  Assume that $\mu$ charges no polar sets, i.e., $\text{Cap}(A)=0$ implies $\mu(A)=0$ for any subset $A\subset \Gamma$.  Under these conditions,  the integral $\int_\Gamma \tilde{u}^2d\mu$ ($\leq \infty$) is well defined for any quasi-continuous function $\tilde u \in \tilde H^1(\Omega)$.  Consider a sequence $\{u_n\}\subset  \tilde H^1(\Omega)\cap L^2(\Gamma, \mu)$,  which is dense in $\tilde H^1(\Omega)\cap L^2(\Gamma, \mu)$ with respect to the norm $\|\cdot\|_{H^1(\Omega)}$.  We define the set
\[
	B_\mu:=\bigcap_{n=1}^\infty\{\tilde{u}_n=0\}. 
\]
Note that $B_\mu\subset \Gamma$,  q.e.,  and $B_\mu$ is independent of the choice of the sequence $\{u_n\}$,  as can be demonstrated by the following fact.

\begin{lemma}\label{LM71}
Let $\mu$ be a positive measure on $(\Gamma,\sB(\Gamma))$ such that $\text{Cap}(A)=0$ implies $\mu(A)=0$ for any subset $A\subset \Gamma$.  Then $B_\mu$ is a quasi-closed set such that $\tilde{u}=0$,  q.e. on $B_\mu$ for any $u\in \tilde H^1(\Omega)\cap L^2(\Gamma,\mu)$. 
Furthermore,  the restriction of $\mu$ to $\Gamma_\mu:=\Gamma\setminus B_\mu$ is quasi-admissible.
\end{lemma}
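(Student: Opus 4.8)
The plan is to verify, in turn, the three assertions of the lemma: that $B_\mu$ is quasi-closed; that $\tilde u=0$ q.e.\ on $B_\mu$ for every $u\in\tilde H^1(\Omega)\cap L^2(\Gamma,\mu)$; and that $\mu|_{\Gamma_\mu}$ is quasi-admissible in the sense of Definition~\ref{DEF71}.

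First I would record that each $\tilde u_n$ is quasi-continuous, so $\{\tilde u_n=0\}=\{\tilde u_n\ge0\}\cap\{\tilde u_n\le0\}$ is quasi-closed, and since a countable intersection of quasi-closed sets is quasi-closed, $B_\mu=\bigcap_{n\ge1}\{\tilde u_n=0\}$ is quasi-closed. For the vanishing property, given $u\in\tilde H^1(\Omega)\cap L^2(\Gamma,\mu)$ I would use the density of $\{u_n\}$ in the $\|\cdot\|_{H^1(\Omega)}$-norm (which is the $\bD_1$-norm of $(\bD,\tilde H^1(\Omega))$) to extract a subsequence $u_{n_k}\to u$ in this norm and then a further subsequence with $\tilde u_{n_k}\to\tilde u$ q.e.; since $\tilde u_{n_k}=0$ at every point of $B_\mu$ by construction, $\tilde u=0$ q.e.\ on $B_\mu$. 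Applying this to $u_0\in H^1_0(\Omega)\subset\tilde H^1(\Omega)\cap L^2(\Gamma,\mu)$ chosen continuous and strictly positive on $\Omega$ (for instance a suitably weighted sum of nonnegative functions in $C_c^\infty(\Omega)$ whose supports cover $\Omega$), this gives $B_\mu\cap\Omega\subset\{\tilde u_0=0\}\cap\Omega$ up to a polar set, and the latter set is polar; hence $B_\mu\subset\Gamma$ q.e. Modifying $B_\mu$ by this polar set (which affects neither its quasi-closedness nor $\Gamma_\mu$ nor any capacity- or $\mu$-statement), I may assume $B_\mu\subset\Gamma$, so that $B_\mu=\Gamma\setminus\Gamma_\mu$ is quasi-closed; this is condition~(1) of Definition~\ref{DEF71}, and the same reasoning shows $B_\mu$ is determined, up to a polar set, independently of the chosen dense sequence.

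For condition~(2) I must show that $\mu$, extended by $\mu(\Omega):=0$, is smooth on $G_\mu:=\overline\Omega\setminus B_\mu$ with respect to the part Dirichlet form $\bD^{G_\mu}$ of $(\bD,\tilde H^1(\Omega))$ on $G_\mu$, whose domain is $\{v\in\tilde H^1(\Omega):\tilde v=0\ \text{q.e.\ on}\ B_\mu\}$; this form is quasi-regular by Lemma~\ref{LM42}, since $G_\mu$ is quasi-open and hence (q.e.) finely open with respect to $\overline X$ by Lemma~\ref{LM32}~(4). That $\mu$ charges no $\bD^{G_\mu}$-polar set is immediate from the hypothesis on $\mu$ together with Corollary~\ref{COR42}, which identifies $\bD^{G_\mu}$-polar subsets of $G_\mu$ with $\bD$-polar sets. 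To produce the required compact $\bD^{G_\mu}$-nest I would manufacture a single witnessing function from the dense sequence: setting $w_n:=|\tilde u_n|\wedge1$, each $w_n\in\tilde H^1(\Omega)$ is bounded and quasi-continuous, with $\{w_n=0\}=\{\tilde u_n=0\}$ and $\int_\Gamma w_n^2\,d\mu\le\int_\Gamma\tilde u_n^2\,d\mu<\infty$, and putting
\[
	c_n:=\frac{2^{-n}}{1+\|w_n\|_{H^1(\Omega)}+\int_\Gamma w_n^2\,d\mu},\qquad g:=\sum_{n\ge1}c_n w_n,
\]
the three series $\sum_n c_n\|w_n\|_{H^1(\Omega)}$, $\sum_n c_n\int_\Gamma w_n^2\,d\mu$ and $\sum_n c_n$ are all finite, so the partial sums of $g$ converge in the $\bD_1$-norm and, being pointwise increasing, converge pointwise to $g$; hence $g$ agrees q.e.\ with a $\bD$-quasi-continuous element of $\tilde H^1(\Omega)$, with $0\le g\le\sum_n c_n<\infty$ and $\int_\Gamma g^2\,d\mu<\infty$ by Cauchy--Schwarz. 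Since $g=0$ on $B_\mu$ while $g(x)>0$ at every $x\in G_\mu$, the function $g$ lies in the domain of $\bD^{G_\mu}$, and its restriction to $G_\mu$ is $\bD^{G_\mu}$-quasi-continuous (restrictions of $\bD$-quasi-continuous functions to $G_\mu$ are $\bD^{G_\mu}$-quasi-continuous, as noted before Corollary~\ref{COR42}) and strictly positive q.e.\ on $G_\mu$.

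To finish, I would choose a $\bD^{G_\mu}$-nest $\{K_n:n\ge1\}$ of compact subsets of $G_\mu$ with $g\in C(\{K_n\})$ and $g>0$ on $\bigcup_{n\ge1}K_n$ (refine any compact nest so that $g$ is continuous on each $K_n$, then discard the $\bD^{G_\mu}$-polar set $\{g\le0\}\cap G_\mu$). On the compact set $K_n$ the continuous positive function $g$ is bounded below by some $c_n'>0$, so $\mu(K_n)\le(c_n')^{-2}\int_\Gamma g^2\,d\mu<\infty$; since $\bigcup_{n\ge1}K_n$ exhausts $G_\mu$ up to a $\bD^{G_\mu}$-polar set, which $\mu$ does not charge, $\mu$ is $\sigma$-finite on $G_\mu$, and together with the bounds $\mu(K_n)<\infty$ this shows $\mu$ is smooth with respect to $\bD^{G_\mu}$, hence $\mu|_{\Gamma_\mu}$ is quasi-admissible. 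The step I expect to be the main obstacle is precisely the construction of $g$: one needs a bounded, quasi-continuous, q.e.\ strictly positive function of finite $\mu$-energy on $G_\mu$ whose zero set is (essentially) $B_\mu$ and which certifies smoothness through a compact nest on which it is continuous and bounded away from $0$---much as in the proof of \cite[Theorem~3.4]{Y96}---while making sure that the ``no polar charge'' and quasi-continuity notions transfer correctly between $(\bD,\tilde H^1(\Omega))$ and its part form on $G_\mu$, which is exactly what Corollary~\ref{COR42} and the surrounding discussion provide; the remaining verifications are routine.
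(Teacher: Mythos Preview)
Your argument is correct. For the first two assertions---quasi-closedness of $B_\mu$ and the vanishing of every $\tilde u$ on $B_\mu$---your proof coincides with the paper's: both exploit that $B_\mu$ is a countable intersection of zero sets of quasi-continuous functions and that $H^1$-convergent sequences admit q.e.-convergent subsequences. Your additional paragraph verifying $B_\mu\subset\Gamma$ q.e.\ via a strictly positive $u_0\in H^1_0(\Omega)$ is fine; the paper simply asserts this before stating the lemma.

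The genuine difference lies in the quasi-admissibility part. The paper disposes of the smoothness of $\mu|_{\Gamma_\mu}$ with respect to the part form in one line by invoking \cite[Proposition~2.13]{RS95}. You instead give a self-contained construction: from the dense sequence $\{u_n\}$ you build a single bounded quasi-continuous witness $g=\sum_n c_n(|\tilde u_n|\wedge 1)$ that is strictly positive q.e.\ on $G_\mu$, vanishes on $B_\mu$, and has $\int_\Gamma g^2\,d\mu<\infty$; then you run the standard capacity argument (exactly as in the proof of \cite[Theorem~3.4]{Y96}, which the paper itself invokes elsewhere) to produce a compact $\bD^{G_\mu}$-nest with $\mu(K_n)<\infty$. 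This is precisely the kind of proof that sits behind the cited result in \cite{RS95}, so what you gain is independence from that reference at the cost of a longer argument; what the paper gains by citing is brevity. Both routes rely on Corollary~\ref{COR42} (equivalence of polar sets for the part form and the ambient form) and on the transfer of quasi-continuity to the part form, which you correctly identify. One minor point: when you ``discard the $\bD^{G_\mu}$-polar set $\{g\le0\}\cap G_\mu$'' from the compact nest, this should be read as intersecting with a second nest avoiding that polar set, so that compactness is preserved; this is routine and does not affect the validity of your proof.
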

\begin{proof}
For any $u\in \tilde{H}^1(\Omega)\cap L^2(\Gamma,\mu)$, there exists a subsequence $\{u_{n_k}\}$ such that $\tilde u_{n_k}\rightarrow \tilde{u}$, q.e.  Thus,  $\tilde{u}=0$,  q.e. on $B_\mu$.  For the second part,  it suffices to show that $\mu$,  as a measure on $\overline{\Omega}\setminus B_\mu$, is smooth with respect to the part Dirichlet form of $(\bD,\tilde{H}^1(\Omega))$  on $\overline{\Omega}\setminus B_\mu$.  This fact has been established in \cite[Proposition~2.13]{RS95}. 
\end{proof}

\subsection{Non-local Robin boundary}

Now we turn to examine the Laplacian with non-local Robin boundary condition,  which has been discussed in \cite{OA21} and the references cited therein.   

\begin{definition}
Let $\kappa$ be a positive Borel measure on $\Gamma_{\kappa,\theta}(\subset \Gamma)$,  and let $\theta$ be a positive,  symmetric measure on $(\Gamma_{\kappa,\theta}\times \Gamma_{\kappa,\theta})\setminus d$,  where $d$ denotes the diagonal of $\Gamma \times \Gamma$. 
The pair $(\kappa, \theta)$ is called \emph{quasi-admissible} if the following conditions hold:
\begin{itemize}
\item[(1)] $B_{\kappa,\theta}:=\Gamma\setminus \Gamma_{\kappa,\theta}$ is quasi-closed.
\item[(2)] $\kappa+\bar\theta$ is smooth with respect to the part Dirichlet form of $(\bD, \tilde{H}^1(\Omega))$ on $\overline{\Omega}\setminus B_{\kappa,\theta}$,  where $\bar{\theta}(dx):=\theta(dx,\Gamma_{\kappa, \theta})$.  
\end{itemize}
\end{definition}
\begin{remark}
The measure $\kappa$ with $\Gamma_\kappa:=\Gamma_{\kappa,\theta}$ is quasi-admissible in the sense of Definition~\ref{DEF71}.  If $\theta=0$, then $(\sE^{\kappa,\theta},\sF^{\kappa,\theta})$ as presented in \eqref{eq:74} is identical to $(\sE^\kappa, \sF^\kappa)$ defined in \eqref{eq:73} with $\mu=\kappa$. 
\end{remark}

For any quasi-admissible pair $(\kappa, \theta)$,  we define the symmetric quadratic form on $L^2(\overline{\Omega})$ as follows:
\begin{equation}\label{eq:74}
\begin{aligned}
	&\sF^{\kappa,\theta}:=\bigg\{\tilde u\in \tilde H^1(\Omega): \tilde u=0\text{ q.e. on }B_{\kappa,\theta}, \\
	&\qquad \qquad\quad  \int_{\Gamma_{\kappa,\theta}\times \Gamma_{\kappa,\theta}\setminus d}(\tilde{u}(x)-\tilde u(y))^2\theta(dxdy)+\int_{\Gamma_{\kappa,\theta}} \tilde u(x)^2\kappa(dx)<\infty\bigg\}, \\
	&\sE^{\kappa,\theta}(\tilde u,\tilde v):=\bD(\tilde u,\tilde v)+\int_{\Gamma_{\kappa,\theta}\times \Gamma_{\kappa,\theta}\setminus d}(\tilde{u}(x)-\tilde u(y))(\tilde{v}(x)-\tilde v(y))\theta(dxdy) \\
	&\qquad \qquad\quad+\int_{\Gamma_{\kappa,\theta}} \tilde u(x)\tilde v(x)\kappa(dx),\quad \tilde u,\tilde v\in \sF^{\kappa,\theta}.
\end{aligned}
\end{equation}
The following fact is elementary.

\begin{lemma}
Let $(\kappa, \theta)$ be a quasi-admissible pair.  Then $(\sE^{\kappa,\theta},\sF^{\kappa,\theta})$ is a quasi-regular and symmetric Dirichlet form on $L^2(\overline{\Omega})$.  
\end{lemma}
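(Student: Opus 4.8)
The plan is to verify the three defining properties of a quasi-regular Dirichlet form in turn --- density of the domain, closedness together with the Markovian property, and quasi-regularity --- with only the last point requiring real work.

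First I would record that $\bar\theta(dx)=\theta(dx,\Gamma_{\kappa,\theta})$ satisfies $\bar\theta\le\kappa+\bar\theta$, which is smooth, so that $\kappa$ and $\bar\theta$ (hence $\theta$, by symmetry) charge no polar sets; consequently the two boundary integrals in \eqref{eq:74} are well defined for every $\tilde u\in\tilde H^1(\Omega)$. Density of $\sF^{\kappa,\theta}$ in $L^2(\overline\Omega)=L^2(\Omega)$ follows from $H^1_0(\Omega)\subset\sF^{\kappa,\theta}$ --- a quasi-continuous version of $u\in H^1_0(\Omega)$ vanishes q.e. on $\Gamma\supset B_{\kappa,\theta}$ and contributes nothing to the $\kappa$- and $\theta$-terms --- together with the density of $H^1_0(\Omega)$ in $L^2(\Omega)$ and $m(\Gamma)=0$. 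Symmetry is immediate, and the Markovian property holds because every normal contraction $T$ decreases each of the three terms: $\bD(Tu,Tu)\le\bD(u,u)$ classically, $\int_{\Gamma_{\kappa,\theta}}(\widetilde{Tu})^2\,d\kappa\le\int_{\Gamma_{\kappa,\theta}}\tilde u^2\,d\kappa$ since $|Tu|\le|u|$ pointwise, and $|T(a)-T(b)|\le|a-b|$ gives the corresponding inequality for the $\theta$-term. For closedness I would argue as in the standard Fatou-type verification for forms of this type: an $\sE^{\kappa,\theta}_1$-Cauchy sequence $\{u_n\}$ is $\bD_1$-Cauchy, hence converges in $\tilde H^1(\Omega)$ to some $u$ with, along a subsequence, $\tilde u_{n_k}\to\tilde u$ q.e., while it is simultaneously Cauchy in $L^2(\Gamma_{\kappa,\theta},\kappa)$ and in the Hilbert space of $\theta$-square-integrable functions under $u\mapsto\tilde u(x)-\tilde u(y)$; since $\kappa$ and $\theta$ charge no polar sets, the two limits are identified with $\tilde u$ and $\tilde u(x)-\tilde u(y)$, so $\tilde u\in\sF^{\kappa,\theta}$, $\tilde u=0$ q.e. on $B_{\kappa,\theta}$, and $\sE^{\kappa,\theta}_1(u_n-u,u_n-u)\to 0$. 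This shows $(\sE^{\kappa,\theta},\sF^{\kappa,\theta})$ is a symmetric Dirichlet form on $L^2(\overline\Omega)$.

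For quasi-regularity, the key observation is that $\kappa$ with defining set $\Gamma_\kappa:=\Gamma_{\kappa,\theta}$ is quasi-admissible in the sense of Definition~\ref{DEF71} --- $B_\kappa=B_{\kappa,\theta}$ is quasi-closed and $\kappa\le\kappa+\bar\theta$ is smooth with respect to the part Dirichlet form of $(\bD,\tilde H^1(\Omega))$ on $G_{\kappa,\theta}$ --- so by the argument in the proof of Theorem~\ref{THM73} the form $(\sE^\kappa,\sF^\kappa)=(\sE^{\kappa,0},\sF^{\kappa,0})$ is already a quasi-regular Dirichlet form on $L^2(\overline\Omega)$, and by Remark~\ref{RM49} its quasi-notions agree with those of that part form, so $\bar\theta$ is smooth with respect to $\sE^\kappa$ as well. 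Then $(\sE^{\kappa,\theta},\sF^{\kappa,\theta})$ is nothing but $(\sE^\kappa,\sF^\kappa)$ perturbed by the symmetric jumping measure $\theta$ on $\Gamma_{\kappa,\theta}\times\Gamma_{\kappa,\theta}\setminus d$, with the domain cut down to functions of finite $\theta$-energy. I would transfer $(\sE^\kappa,\sF^\kappa)$ through a quasi-homeomorphism to an equivalent regular Dirichlet form on a locally compact separable metric space, pushing $\theta$ forward to a symmetric jumping measure off the diagonal with smooth marginal, invoke the theory of jumping-measure perturbations of regular Dirichlet forms (as developed, e.g., in \cite{OA21} and the references therein) to conclude that the perturbed form is regular, and then pull the conclusion back. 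Finally, since every compact subset of $G_{\kappa,\theta}$ is compact in $\overline\Omega$, quasi-regularity on $L^2(\overline\Omega)$ follows exactly as in the proof of Corollary~\ref{COR34}.

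The main obstacle is precisely this last step: the stability of (quasi-)regularity under the boundary jumping-measure perturbation by $\theta$ when only $\bar\theta$ is assumed smooth. One must ensure that $\sF^{\kappa,\theta}$ still contains a core and admits a nest compatible with one for $(\sE^\kappa,\sF^\kappa)$; passing to the regular locally compact model and using the existing perturbation theory is what makes this go through, while everything else is the routine Fatou-type bookkeeping indicated above.
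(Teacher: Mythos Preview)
Your treatment of density, symmetry, the Markovian property, and closedness is essentially the same as the paper's and is fine. The gap is in the quasi-regularity step.

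You propose to view $(\sE^{\kappa,\theta},\sF^{\kappa,\theta})$ as a jumping-measure perturbation of the local form $(\sE^\kappa,\sF^\kappa)$, transfer to a regular model, and then appeal to a general stability result for such perturbations. You correctly flag this as ``the main obstacle,'' and it is a genuine one: there is no off-the-shelf theorem guaranteeing that adding a jumping term whose marginal is merely smooth preserves (quasi-)regularity, and \cite{OA21} does not provide one in this generality. Passing to a locally compact model via quasi-homeomorphism does not resolve the issue; the same question of whether the perturbed form admits a compatible nest of compact sets simply reappears there.

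The paper bypasses this entirely by an elementary nest-comparison. From $(a-b)^2\le 2a^2+2b^2$ one gets
\[
\int_{\Gamma_{\kappa,\theta}\times\Gamma_{\kappa,\theta}\setminus d}(\tilde u(x)-\tilde u(y))^2\,\theta(dxdy)\le 4\int_{\Gamma_{\kappa,\theta}}\tilde u^2\,d\bar\theta,
\]
so with $\mu:=\kappa+4\bar\theta$ (quasi-admissible with the same defining set $\Gamma_{\kappa,\theta}$) one has $\sF^\mu\subset\sF^{\kappa,\theta}$ and $\sE^{\kappa,\theta}\le\sE^\mu$ on $\sF^\mu$; hence every $\sE^\mu$-nest is an $\sE^{\kappa,\theta}$-nest. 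In the other direction, letting $(\sA,\sG)$ be the part form of $(\bD,\tilde H^1(\Omega))$ on $G_{\kappa,\theta}=\overline\Omega\setminus B_{\kappa,\theta}$, one has $\sF^{\kappa,\theta}\subset\sG$ and $\sA\le\sE^{\kappa,\theta}$, so every $\sE^{\kappa,\theta}$-nest is an $\sA$-nest. Since $\sA$-nests and $\sE^\mu$-nests already coincide (by \cite[Theorem~5.1.4]{CF12}), all three quasi-notions agree, and quasi-regularity of $(\sE^{\kappa,\theta},\sF^{\kappa,\theta})$ follows immediately from that of $(\sA,\sG)$ and $(\sE^\mu,\sF^\mu)$. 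No perturbation theory for jumping measures is needed --- the single inequality above replaces it.
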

\begin{proof}
Since $H^1_0(\Omega)\subset \sF^{\kappa,\theta}$,  it follows that $\sF^{\kappa,\theta}$ is dense in $L^2(\overline{\Omega})$.  Clearly,  $(\sE^{\kappa,\theta},\sF^{\kappa,\theta})$ is a symmetric,  positive form that satisfies the Markovian property.  To show that it is closed,  consider an $\sE^{\kappa,\theta}_1$-Cauchy sequence $\{\tilde{u}_n\}\subset \sF^{\kappa,\theta}$.  This sequence is also Cauchy in both $\tilde{H}^1(\Omega)$ and $L^2(\Gamma_{\kappa,\theta},\kappa)$,  and moreover,  $f_n(x,y):=\tilde{u}_n(x)-\tilde{u}_n(y)$ is Cauchy in $L^2(\Gamma_{\kappa,\theta}\times \Gamma_{\kappa,\theta}\setminus d,  \theta)$.  Hence, there exist $\tilde{u}\in \tilde{H}^1(\Omega)$,  $v\in L^2(\Gamma_{\kappa,\theta},\kappa)$, and $f\in L^2(\Gamma_{\kappa,\theta}\times \Gamma_{\kappa,\theta}\setminus d,  \theta)$ such that $\tilde{u}_n$ converges to $\tilde{u}$ and $v$ with respect to the $H^1(\Omega)$ and $L^2(\Gamma_{\kappa,\theta},\kappa)$-norms,  respectively,  and $f_n\rightarrow f$ with respect to the $L^2(\Gamma_{\kappa,\theta}\times \Gamma_{\kappa,\theta}\setminus d,  \theta)$-norm.  By taking a subsequence if necessary,  we can assume that $\tilde{u}_n\rightarrow \tilde{u}$, q.e.,  (which indicates that $\tilde{u}=$ q.e. on $B_{\kappa,\theta}$,)  $\tilde{u}_n\rightarrow v$,  $\kappa$-a.e., and $f_n\rightarrow f$,  $\theta$-a.e.  Since $\kappa+\bar{\theta}$ charges no polar sets,  we obtain that $\tilde{u}=v$, $\kappa$-a.e.,  and $f(x,y)=\tilde{u}(x)-\tilde{u}(y)$ for $\theta$-a.e. $(x,y)$.  Particularly,  $\tilde{u}\in \sF^{\kappa,\theta}$ and $\sE^{\kappa,\theta}_1(\tilde{u}_n-\tilde{u},\tilde{u}_n-\tilde{u})\rightarrow 0$.  This establishes the closedness of $(\sE^{\kappa,\theta},\sF^{\kappa,\theta})$.  

It remains to demonstrate that $(\sE^{\kappa,\theta},\sF^{\kappa,\theta})$ is quasi-regular on $L^2(\overline{\Omega})$.   Let $\mu:=\kappa+4\bar{\theta}$,  which is quasi-admissible in the sense of Definition~\ref{DEF71} with $\Gamma_\mu=\Gamma_{\kappa,\theta}$,  and define $(\sE^\mu,\sF^\mu)$ as in \eqref{eq:73}.  Notably,  we have
\[
	\sF^\mu\subset \sF^{\kappa,\theta},\quad \sE^{\kappa,\theta}(\tilde u,\tilde u)\leq \sE^\mu(\tilde u,\tilde u),\; \forall \tilde u\in \sF^\mu.  
\]
This implies that any $\sE^\mu$-nest is also an $\sE^{\kappa,\theta}$-nest.  Similarly,  consider the part Dirichlet form $(\sA,\sG)$ of $(\bD,\tilde{H}^1(\Omega))$ on $G:=\overline{\Omega}\setminus B_{\kappa,\theta}$.  From 
\[
	\sF^{\kappa,\theta}\subset \sG,\quad \sA(\tilde{u},\tilde{u})\leq \sE^{\kappa,\theta}(\tilde{u},\tilde{u}),\;\forall \tilde{u}\in \sF^{\kappa,\theta},  
\]
it follows that any $\sE^{\kappa,\theta}$-nest is also an $\sA$-nest.  According to \cite[Theorem~5.1.4]{CF12},  any $\sA$-nest is an $\sE^\mu$-nest.  Thus,  the quasi-notions for $\sE^{\kappa,\theta},\sE^\mu$ and $\sA$ are equivalent.  Therefore,  the quasi-regularity of $(\sE^{\kappa,\theta},\sF^{\kappa,\theta})$ follows directly from the quasi-regularity of $(\sA,\sG)$ and $(\sE^\mu,\sF^\mu)$.  This completes the proof.
\end{proof}

Analogously,  the admissible pair considered in \cite{OA21} is quasi-admissible,  as explained in the following example.

\begin{example}\label{EXA712}
Consider a positive Borel measure $\kappa$ on $\Gamma$ and a symmetric,  positive Borel measure $\theta$ on $\Gamma\times \Gamma \setminus d$.  Let $\Gamma_{\kappa,\theta}$ be the part of $\Gamma$ on which $\kappa$ and $\hat{\theta}$ are locally finite,  i.e.,
\begin{equation}\label{eq:76}
	\Gamma_{\kappa,\theta}:=\{z\in \Gamma: \exists r>0\text{ such that }\kappa(\Gamma\cap B(z,r))+\hat{\theta}(\Gamma\cap B(z,r))<\infty\},
\end{equation}
where $\hat\theta(dx):=\theta(dx,\Gamma)$.  The pair $(\kappa, \theta)$ is called \emph{admissible} if $\kappa(A)+\hat{\theta}(A)=0$ for any polar set $A\subset \Gamma_{\kappa,\theta}$.  For such a pair,  the key findings from \cite[Theorems~3.2 and 3.3]{OA21} indicate that
\[
\begin{aligned}
	&\cD(\sA^{\kappa,\theta}):=\bigg\{u\in H^1(\Omega)\cap C_c(\overline{\Omega}): u=0\text{ on }\Gamma\setminus \Gamma_{\kappa,\theta},\\&\qquad \qquad\quad\int_{\Gamma\times \Gamma\setminus d}({u}(x)- u(y))^2\theta(dxdy)+\int_{\Gamma}  u(x)^2\kappa(dx)<\infty\bigg\}, \\
	&\sA^{\kappa,\theta}( u, v):=\bD( u, v)+\int_{\Gamma\times \Gamma\setminus d}({u}(x)- u(y))({v}(x)- v(y))\theta(dxdy) \\
	&\qquad \qquad\quad+\int_{\Gamma}  u(x) v(x)\kappa(dx),\quad  u, v\in \cD(\sA^{\kappa,\theta})
\end{aligned}
\]
is closable,  and its closure is given by
\[
\begin{aligned}
	&\overline{\cD(\sA^{\kappa,\theta})}=
	\bigg\{\tilde u\in \tilde H^1(\Omega): \tilde u=0\text{ q.e.  on }\Gamma\setminus \Gamma_{\kappa,\theta},\\&\qquad \qquad\quad\int_{\Gamma\times \Gamma\setminus d}(\tilde{u}(x)- \tilde u(y))^2\theta(dxdy)+\int_{\Gamma}  \tilde u(x)^2\kappa(dx)<\infty\bigg\}.
\end{aligned}
\]

In fact,  $Y:=\Omega\cup \Gamma_{\kappa, \theta}$ is clearly an open subset of $\overline{\Omega}$,  and $\kappa+\hat{\theta}$ is a Radon measure on $Y$.  Thus,  $\kappa+\hat{\theta}$ is a Radon smooth measure on $Y$ with respect to the part Dirichlet form of $(\bD,\tilde{H}^1(\Omega))$ on $Y$.  It is noteworthy that $\cD(\sA^{\kappa,\theta})\subset C_0(Y)$,  where $C_0(Y)$ represents the family of all continuous functions on $Y$ that vanish at infinity (see \cite[page 132]{F99}),   and $\cD(\sA^{\kappa,\theta})$ is dense in $C_0(Y)$ with respect to the uniform norm by the Stone-Weierstrass theorem.  Particularly,  $(\sA^{\kappa,\theta},\overline{\cD(\sA^{\kappa,\theta})})$ is a regular,  symmetric Dirichlet form on $L^2(Y)$,  which admits the Beurling-Deny decomposition for $\tilde{u}, \tilde v\in \overline{\cD(\sA^{\kappa,\theta})}$ as follows:
\[
\begin{aligned}
\sA^{\kappa,\theta}(\tilde u,\tilde v)&=\bD(\tilde u,\tilde v)+\int_{\Gamma_{\kappa,\theta}\times \Gamma_{\kappa,\theta}\setminus d}(\tilde{u}(x)-\tilde u(y))(\tilde{v}(x)-\tilde v(y))\theta(dxdy) \\
	&\qquad \qquad\quad+\int_{\Gamma_{\kappa,\theta}} \tilde u(x)\tilde v(x)\left(\kappa+2\hat{\theta}_1\right)(dx),
\end{aligned}\]
where $\hat{\theta}_1(dx):={\theta}(dx,\Gamma\setminus \Gamma_{\kappa,\theta})$.  

It is straightforward to verify that $\kappa':=(\kappa+2\hat{\theta}_1)|_{\Gamma_{\kappa,\theta}}$ and $\theta'=\theta|_{\Gamma_{\kappa,\theta}\times \Gamma_{\kappa,\theta}\setminus d}$ comprise a quasi-admissible pair with $\Gamma_{\kappa',\theta'}=\Gamma_{\kappa,\theta}$,  and $(\sE^{\kappa',\theta'},\sF^{\kappa',\theta'})$,  as defined in \eqref{eq:74}, is identical to $(\sA^{\kappa,\theta},\overline{\cD(\sA^{\kappa,\theta})})$. 
\end{example}

Clearly, $(\bD,H^1_0(\Omega))$ is dominated by $(\sE^{\kappa,\theta},\sF^{\kappa,\theta})$,  while $(\sE^{\kappa,\theta},\sF^{\kappa,\theta})$ is not dominated by $(\bD, \tilde{H}^1(\Omega))$ due to Corollary~\ref{COR73}.  The following result characterizes all symmetric Dirichlet forms that are sandwiched between $(\bD,H^1_0(\Omega))$ and $(\sE^{\kappa,\theta},\sF^{\kappa,\theta})$.  

\begin{theorem}\label{THM713}
Let $(\kappa, \theta)$ be a quasi-admissible pair. 
A quasi-regular and symmetric Dirichlet form $(\sE,\sF)$ on $L^2(\overline{\Omega})$ is sandwiched between $(\bD,H^1_0(\Omega))$ and $(\sE^{\kappa,\theta},\sF^{\kappa,\theta})$  if and only if there exists another quasi-admissible pair $(\kappa',\theta')$ with the properties
\begin{equation}\label{eq:75}
\begin{aligned}
	&\Gamma_{\kappa',\theta'}\subset \Gamma_{\kappa,\theta},\quad \theta'\leq \theta|_{\Gamma_{\kappa',\theta'}\times \Gamma_{\kappa',\theta'}\setminus d},\\ &\kappa'\geq \kappa|_{\Gamma_{\kappa',\theta'}}+2(\theta-\theta')(dx,\Gamma_{\kappa',\theta'})|_{\Gamma_{\kappa',\theta'}}+2\theta(dx,\Gamma_{\kappa,\theta}\setminus \Gamma_{\kappa',\theta'})|_{\Gamma_{\kappa',\theta'}}
\end{aligned}\end{equation}
such that $(\sE,\sF)=(\sE^{\kappa',\theta'},\sF^{\kappa',\theta'})$.  
\end{theorem}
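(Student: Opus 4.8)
The plan is to obtain both implications from the general characterization of sandwiched Dirichlet forms (Theorem~\ref{THM62}, in the form of Remark~\ref{RM63}), applied with ``large'' form $(\sE^{\kappa,\theta},\sF^{\kappa,\theta})$ — which is quasi-regular by the lemma just proved — and ``small'' form $(\bD,H^1_0(\Omega))$. The first step is to record that $(\bD,H^1_0(\Omega))$ is exactly the part Dirichlet form of $(\sE^{\kappa,\theta},\sF^{\kappa,\theta})$ on $\Omega$: indeed $\{f\in\sF^{\kappa,\theta}:\tilde f=0\text{ q.e. on }\Gamma\}=H^1_0(\Omega)$ and all boundary terms in $\sE^{\kappa,\theta}$ vanish on $H^1_0(\Omega)$. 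Hence, in the notation of Theorem~\ref{THM62}, we may take $E_M=\Omega$ ($\sE^{\kappa,\theta}$-q.e.), with $M$ the killing on first exit from $\Omega$; since that killing sends the process to the cemetery only upon reaching $\Gamma$, its bivariate Revuz measure $\nu_M$ is carried by $\overline\Omega\times\Gamma$, so $\nu_M|_{\Omega\times\Omega}=0$. The second preliminary fact is that the canonical measure of $(\sE^{\kappa,\theta},\sF^{\kappa,\theta})$ equals $\theta$ off the diagonal: for $f,g$ with disjoint supports, locality of $\bD$ and disjointness annihilate every term of $\sE^{\kappa,\theta}(f,g)$ except $-2\int\tilde f(x)\tilde g(y)\,\theta(dxdy)$, so \eqref{eq:38} (and its quasi-regular version) identify the canonical measure with $\theta$; consequently, for any $\sE^{\kappa,\theta}$-quasi-open $G$, the canonical measure of the part form $(\sE^{\kappa,\theta},\sF^{\kappa,\theta})^G$ is $1_{G\times G}\cdot\theta$ (by \cite[I, Theorem~3.9]{Y962}). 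I will use throughout that the $\sE^{\kappa,\theta}$-quasi-notions coincide with the $(\bD,\tilde H^1(\Omega))$-quasi-notions, as established in the proof of the preceding lemma.

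\emph{Necessity.} Remark~\ref{RM63} provides an $\sE^{\kappa,\theta}$-quasi-open $G\supset\Omega$ (q.e.) and a bivariate smooth measure $\sigma$ on $G\times G$ with respect to $(\sE^{\kappa,\theta},\sF^{\kappa,\theta})^G$ such that $\sF=\sF^{\kappa,\theta,G}\cap L^2(G,\bar\sigma)$ and $\sE=\sE^{\kappa,\theta}+\sigma(\tilde{\,\cdot\,}\otimes\tilde{\,\cdot\,})$ on $\sF$, with $\sigma|_{\Omega\times\Omega}\le\nu_M|_{\Omega\times\Omega}=0$; moreover $\sigma$ is symmetric since $(\sE,\sF)$ is. Removing a polar set we may take $G=\Omega\cup\Gamma'$ with $\Gamma':=G\cap\Gamma_{\kappa,\theta}$. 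Since $\sigma|_{G\times G\setminus d}\le 1_{G\times G}\cdot\theta$ is carried by $\Gamma'\times\Gamma'$ off the diagonal, and $\sigma|_{\Omega\times\Omega}=0$ rules out diagonal mass over $\Omega$, the measure $\sigma$ is carried by $\Gamma'\times\Gamma'$; write $\sigma_j:=\sigma|_{\Gamma'\times\Gamma'\setminus d}$ (symmetric, $\le\theta|_{\Gamma'\times\Gamma'\setminus d}$), $\bar\sigma_j$ for its marginal, and $\kappa_\sigma$ for the measure on $\Gamma'$ carrying the diagonal part of $\sigma$. Now for $f\in\sF^{\kappa,\theta,G}$ one has $\tilde f=0$ q.e. on $\Gamma\setminus\Gamma'$, so the $\theta$- and $\kappa$-integrals over $\Gamma_{\kappa,\theta}$ collapse onto $\Gamma'$, giving
\[
\sE^{\kappa,\theta,G}(f,g)=\bD(f,g)+\int_{\Gamma'\times\Gamma'\setminus d}(\tilde f(x)-\tilde f(y))(\tilde g(x)-\tilde g(y))\,\theta(dxdy)+\int_{\Gamma'}\tilde f\tilde g\;d(\kappa+2\theta(\,\cdot\,,\Gamma_{\kappa,\theta}\setminus\Gamma')),
\]
and the integrability condition defining $\sF^{\kappa,\theta,G}$ becomes finiteness of the corresponding quadratic expression. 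Adding $\sigma(\tilde f\otimes\tilde g)=\int_{\Gamma'\times\Gamma'\setminus d}\tilde f(x)\tilde g(y)\,\sigma_j(dxdy)+\int_{\Gamma'}\tilde f\tilde g\,d\kappa_\sigma$ and using, for any symmetric measure $\eta$ on $\Gamma'\times\Gamma'\setminus d$ with marginal $\bar\eta$, the identity $\int(\tilde f(x)-\tilde f(y))(\tilde g(x)-\tilde g(y))\eta=2\int\tilde f\tilde g\,d\bar\eta-2\int\tilde f(x)\tilde g(y)\eta$, one verifies that with $\theta':=\theta|_{\Gamma'\times\Gamma'\setminus d}-\tfrac12\sigma_j$ (symmetric, $0\le\theta'\le\theta|_{\Gamma'\times\Gamma'\setminus d}$) the combined off-diagonal contribution equals $\int(\tilde f(x)-\tilde f(y))(\tilde g(x)-\tilde g(y))\theta'(dxdy)+\int_{\Gamma'}\tilde f\tilde g\,d\bar\sigma_j$. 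Hence $(\sE,\sF)=(\sE^{\kappa',\theta'},\sF^{\kappa',\theta'})$ with $\Gamma_{\kappa',\theta'}=\Gamma'$ and
\[
\kappa':=\kappa|_{\Gamma'}+2\theta(\,\cdot\,,\Gamma_{\kappa,\theta}\setminus\Gamma')|_{\Gamma'}+\bar\sigma_j+\kappa_\sigma,
\]
the equality of domains needing only $(\tilde f(x)-\tilde f(y))^2\le2\tilde f(x)^2+2\tilde f(y)^2$. The constraints \eqref{eq:75} then hold: $\Gamma'\subset\Gamma_{\kappa,\theta}$ q.e., $\theta'\le\theta|_{\Gamma'\times\Gamma'\setminus d}$, and since $(\theta-\theta')(\,\cdot\,,\Gamma')=\tfrac12\bar\sigma_j$ we get $\kappa'=\kappa|_{\Gamma'}+2(\theta-\theta')(\,\cdot\,,\Gamma')|_{\Gamma'}+2\theta(\,\cdot\,,\Gamma_{\kappa,\theta}\setminus\Gamma')|_{\Gamma'}+\kappa_\sigma$, which exceeds the required lower bound by $\kappa_\sigma\ge0$. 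Finally $(\kappa',\theta')$ is quasi-admissible: $B_{\kappa',\theta'}=\Gamma\setminus\Gamma'=(\Gamma\cap G^c)\cup B_{\kappa,\theta}$ is quasi-closed, and $\kappa'+\bar{\theta'}$ is smooth with respect to the part form of $(\bD,\tilde H^1(\Omega))$ on $G$ because each of its pieces is dominated by a smooth measure ($(\kappa+\bar\theta)|_{\Gamma'}$ and $2\bar\sigma$).

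\emph{Sufficiency.} Let $(\kappa',\theta')$ be quasi-admissible and satisfy \eqref{eq:75}, with $\Gamma':=\Gamma_{\kappa',\theta'}$. The domination $(\bD,H^1_0(\Omega))\preceq(\sE^{\kappa',\theta'},\sF^{\kappa',\theta'})$ follows from Lemma~\ref{LM21}: $H^1_0(\Omega)\subset\sF^{\kappa',\theta'}$, the boundary terms vanish on $H^1_0(\Omega)$ so $\sE^{\kappa',\theta'}=\bD$ there, and $\mathrm bH^1_0(\Omega)$ is an algebraic ideal in $\mathrm b\sF^{\kappa',\theta'}$ since multiplying by a bounded element of $\sF^{\kappa',\theta'}$ preserves vanishing q.e. on $\Gamma$. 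For $(\sE^{\kappa',\theta'},\sF^{\kappa',\theta'})\preceq(\sE^{\kappa,\theta},\sF^{\kappa,\theta})$, the inclusion $\sF^{\kappa',\theta'}\subset\sF^{\kappa,\theta}$ and the algebraic-ideal property come from the same collapsing of the $(\kappa,\theta)$-integrals onto $\Gamma'$, together with $\theta'\le\theta|_{\Gamma'\times\Gamma'\setminus d}$, the lower bound for $\kappa'$ in \eqref{eq:75}, and $(a-b)^2\le2a^2+2b^2$ (here only finiteness is needed, so the crude constant is harmless). For non-negative $f,g\in\sF^{\kappa',\theta'}$, the collapsing and the identity above reduce $\sE^{\kappa',\theta'}(f,g)-\sE^{\kappa,\theta}(f,g)$ to
\[
\int_{\Gamma'}\tilde f\tilde g\;d(\kappa'-\kappa|_{\Gamma'}-2(\theta-\theta')(\,\cdot\,,\Gamma')|_{\Gamma'}-2\theta(\,\cdot\,,\Gamma_{\kappa,\theta}\setminus\Gamma')|_{\Gamma'})+2\int_{\Gamma'\times\Gamma'}\tilde f(x)\tilde g(y)\,(\theta-\theta')(dxdy),
\]
which is $\ge0$: the first integral by \eqref{eq:75}, the second because $f,g\ge0$ and $\theta-\theta'\ge0$. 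Discarding this good cross term is exactly where positivity of $f,g$ enters, and it is what makes the coefficient $2$ (not $4$) in the lower bound for $\kappa'$ in \eqref{eq:75} enough.

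\emph{Main obstacle.} The delicate point is the bookkeeping in the necessity step — collapsing the part form onto $\Gamma'$, which forces the appearance of $2\theta(\,\cdot\,,\Gamma_{\kappa,\theta}\setminus\Gamma')$, and recognizing that the off-diagonal perturbation $\sigma_j$ must be redistributed as a correction $\theta-\tfrac12\sigma_j$ of the jump measure plus a boundary contribution $\bar\sigma_j$ to the killing measure, so that \eqref{eq:75} comes out with the precise factors $\tfrac12$ and $2$. A secondary care point is excluding diagonal mass of $\sigma$ over $\Omega$ (via $\nu_M|_{\Omega\times\Omega}=0$, or equivalently via the two-sided domination, which forces $\sE=\bD$ on $H^1_0(\Omega)$) and checking quasi-admissibility of the constructed pair $(\kappa',\theta')$.
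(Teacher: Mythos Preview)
Your proof is correct and follows essentially the same route as the paper: both arguments invoke Theorem~\ref{THM62}/Remark~\ref{RM63} to produce the quasi-open set $G\supset\Omega$ and the symmetric bivariate smooth measure $\sigma$ with $\sigma|_{\Omega\times\Omega}=0$, both observe that $\sigma$ is carried by $\Gamma'\times\Gamma'$, collapse the part form onto $\Gamma'$, and then set $\theta':=(\theta-\tfrac12\sigma)|_{\Gamma'\times\Gamma'\setminus d}$ and $\kappa':=(\kappa+2\theta(\cdot,\Gamma_{\kappa,\theta}\setminus\Gamma')+\bar\sigma)|_{\Gamma'}$ before checking \eqref{eq:75}, quasi-admissibility, and the domain equality. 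One small slip: the bivariate-smoothness bound gives $\sigma|_{G\times G\setminus d}\le 2\theta|_{G\times G\setminus d}$ (the canonical measure here is $2\theta$, consistent with the Beurling--Deny convention in \cite{CF12} that carries the factor $\tfrac12$), not $\le\theta$ as you wrote; fortunately you only use this to conclude $\theta'\ge 0$, which needs exactly $\sigma\le 2\theta$, so nothing breaks. Your detailed verification of the sufficiency direction (which the paper dismisses as ``straightforward'') is a welcome addition.
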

\begin{proof}
The sufficiency can be verified straightforwardly; we will now demonstrate the necessity.  According to Theorem~\ref{THM62} and Remark~\ref{RM63},  there exists an $\sE^{\kappa,\theta}$-quasi-open set $G$ with $\Omega\subset G\subset G_{\kappa,\theta}:=\Omega\cup \Gamma_{\kappa,\theta}$ along with a (symmetric) bivariate smooth measure $\sigma$ on $G\times G$ with respect to the part Dirichlet form $\left(\sE^1, \sF^1\right)$ of $\left(\sE^{\kappa, \theta}, \sF^{\kappa, \theta}\right)$ on $G$ such that $\sigma(\Omega\times \Omega)=0$ and
\[
\begin{aligned}
	&\sF=\sF^1\cap L^2(G,\bar{\sigma}),\\
	&\sE(u,v)=\sE^1(u,v)+\sigma(\tilde{u}\otimes \tilde v),\quad u,v\in \sF. 
\end{aligned}\]
Recall that the bivariate smooth measure $\sigma$ satisfies that $\bar{\sigma}(dx)=\sigma(dx,G)$ is smooth with respect to $\sE^1$ and $\sigma|_{G\times G\setminus d}\leq 2\theta|_{G\times G\setminus d}$.  Consequently, $\sigma(\Omega\times (G\setminus \Omega))=\sigma((G\setminus \Omega)\times \Omega)=0$,  which implies $\bar{\sigma}(\Omega)=0$. 

A straightforward computation shows that for $u,v\in \sF^1\cap L^2(G,\bar{\sigma})$,
\[
\begin{aligned}
	&\sE^1(u,v)+\sigma(\tilde{u}\otimes \tilde{v})=\bD(u,v)+\int_{G\setminus \Omega}\tilde{u}(x)\tilde{v}(x)\left(\kappa+2\theta(\cdot, \Gamma_{\kappa, \theta}\setminus G)+\bar{\sigma} \right)(dx) \\
	&\quad+\int_{(G\setminus \Omega)\times (G\setminus \Omega)\setminus d}(\tilde{u}(x)-\tilde{u}(y))(\tilde{v}(x)-\tilde{v}(y))\left(\theta-\frac{1}{2}\sigma\right)(dxdy).
\end{aligned}\]
Define 
\[
\theta':=\left(\theta-\frac{1}{2}\sigma\right)|_{(G\setminus \Omega)\times (G\setminus \Omega)\setminus d},\quad \kappa':=\left(\kappa+2\theta(\cdot, \Gamma_{\kappa, \theta}\setminus G)+\bar{\sigma}\right)|_{G\setminus \Omega}
\]
with 
\[
	\Gamma_{\kappa',\theta'}:=G\setminus \Omega. 
\]
It is evident that they satisfy \eqref{eq:75}. 

Let us verify that $(\kappa',\theta')$ is quasi-admissible.  Note that the quasi-notion of $\sE^{\kappa,\theta}$ corresponds to that of the part Dirichlet form of $(\bD, \tilde H^1(\Omega))$ on $G_{\kappa, \theta}$.  As stated in Corollary~\ref{COR33}~(2), $G$ is quasi-open,  and hence, $\Gamma\setminus \Gamma_{\kappa',\theta'}=\overline{\Omega}\setminus G$ is quasi-closed.  Since $\kappa+\bar{\theta}$ is smooth with respect to the part Dirichlet form of $(\bD,\tilde{H}^1(\Omega))$ on $G_{\kappa, \theta}$,  it follows from the remarks preceding Corollary~\ref{COR33} that the restriction of $\kappa+\bar{\theta}$ to $G$ is smooth with respect to the part Dirichlet form of $(\bD,\tilde{H}^1(\Omega))$ on $G$.  It is notable that $\bar{\sigma}$ is also smooth with respect to  the part Dirichlet form of $(\bD,\tilde{H}^1(\Omega))$ on $G$,  as it is smooth with respect to $\sE^1$.  Therefore,  we deduce that $\kappa'+\bar{\theta}'$ is smooth with respect to  the part Dirichlet form of $(\bD,\tilde{H}^1(\Omega))$ on $G$,  where $\bar{\theta}'(dx):=\theta'(dx,\Gamma_{\kappa',\theta'})$.  This verifies the quasi-admissibility of $(\kappa',\theta')$.  

It remains to demonstrate that $\sF=\sF^{\kappa',\theta'}$.  Note that
\[
\begin{aligned}
\sF^1&=\bigg\{\tilde u\in \tilde H^1(\Omega): \tilde u=0\text{ q.e. on }\Gamma\setminus \Gamma_{\kappa',\theta'}, \\
	&\qquad \qquad  \int_{\Gamma_{\kappa,\theta}\times \Gamma_{\kappa,\theta}\setminus d}(\tilde{u}(x)-\tilde u(y))^2\theta(dxdy)+\int_{\Gamma_{\kappa,\theta}} \tilde u(x)^2\kappa(dx)<\infty\bigg\} \\
&=\bigg\{\tilde u\in \tilde H^1(\Omega): \tilde u=0\text{ q.e. on }\Gamma\setminus \Gamma_{\kappa',\theta'},   \int_{\Gamma_{\kappa',\theta'}\times \Gamma_{\kappa',\theta'}\setminus d}(\tilde{u}(x)-\tilde u(y))^2\theta(dxdy)\\
	&\qquad \qquad+\int_{\Gamma_{\kappa',\theta'}} \tilde u(x)^2\left(\kappa(dx)+2\theta(dx,\Gamma_{\kappa,\theta}\setminus \Gamma_{\kappa',\theta'})\right)<\infty\bigg\}.
\end{aligned}\]
For $\tilde{u}\in \sF^1\cap L^2(G,\bar{\sigma})$,  we have
\[
\begin{aligned}
	&\int_{\Gamma_{\kappa',\theta'}\times \Gamma_{\kappa',\theta'}\setminus d}(\tilde{u}(x)-\tilde u(y))^2\theta'(dxdy)+\int_{\Gamma_{\kappa',\theta'}} \tilde u(x)^2\kappa'(dx) \\
	=&\int_{\Gamma_{\kappa',\theta'}}\tilde{u}(x)\tilde{u}(y)\sigma(dxdy)+\int_{\Gamma_{\kappa',\theta'}\times \Gamma_{\kappa',\theta'}\setminus d}(\tilde{u}(x)-\tilde u(y))^2\theta(dxdy)\\
	&\qquad \qquad+\int_{\Gamma_{\kappa',\theta'}} \tilde u(x)^2\left(\kappa(dx)+2\theta(dx,\Gamma_{\kappa,\theta}\setminus \Gamma_{\kappa',\theta'})\right)\\
	<& \infty,
\end{aligned}\]
since $\left| \int_{\Gamma_{\kappa',\theta'}}\tilde{u}(x)\tilde{u}(y)\sigma(dxdy)\right|\leq \int_{\Gamma_{\kappa',\sigma'}} \tilde{u}(x)^2\bar{\sigma}(dx)<\infty$.  This establishes that $\sF=\sF^1\cap L^2(G,\bar{\sigma})\subset \sF^{\kappa',\theta'}$.  The opposite inclusion can be verified similarly.  This completes the proof.
\end{proof}

One may be interested in local Dirichlet forms sandwiched between  $(\bD,H^1_0(\Omega))$ and $(\sE^{\kappa,\theta},\sF^{\kappa,\theta})$.  According to Theorem~\ref{THM713},  these forms are characterized as follows.

\begin{corollary}\label{COR714}
Let $(\kappa,\theta)$ be a quasi-admissible pair with $\bar{\theta}(\cdot):=\theta(\cdot, \Gamma_{\kappa,\theta})$.  
A quasi-regular and local Dirichlet form $(\sE,\sF)$ on $L^2(\overline{\Omega})$ is sandwiched between $(\bD,H^1_0(\Omega))$ and $(\sE^{\kappa,\theta},\sF^{\kappa,\theta})$ if and only if there exists a quasi-admissible measure $\mu$ with the properties
\[
\begin{aligned}
	&\Gamma_{\mu}\subset \Gamma_{\kappa,\theta},\quad \mu\geq (\kappa+2\bar\theta)|_{\Gamma_{\mu}}
\end{aligned}\]
such that $(\sE,\sF)=(\sE^\mu,\sF^\mu)$,  as defined in \eqref{eq:73}.
\end{corollary}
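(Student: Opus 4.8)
The plan is to obtain Corollary~\ref{COR714} as the \emph{local specialization} of Theorem~\ref{THM713}. The first point to establish is that a quasi-regular local Dirichlet form $(\sE,\sF)$ sandwiched between $(\bD,H^1_0(\Omega))$ and $(\sE^{\kappa,\theta},\sF^{\kappa,\theta})$ is automatically symmetric, so that Theorem~\ref{THM713} applies to it; this may be argued as in Corollary~\ref{COR73}, or directly from the representation \eqref{eq:46} of the multiplicative functional $M'$ producing $(\sE,\sF)$ from the Hunt process attached to $(\sE^{\kappa,\theta},\sF^{\kappa,\theta})$: since the canonical measure of that process is carried by $\Gamma\times\Gamma$, locality of $(\sE,\sF)$ forces the jump--killing component $1_\Gamma+1_{\Gamma^c}\Phi$ of $M'$ to annihilate every boundary jump that stays inside the state space, so that only a continuous-additive-functional killing remains, whose bivariate Revuz measure is carried by the diagonal and is therefore symmetric.

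Granting symmetry, Theorem~\ref{THM713} provides a quasi-admissible pair $(\kappa',\theta')$ satisfying \eqref{eq:75} with $(\sE,\sF)=(\sE^{\kappa',\theta'},\sF^{\kappa',\theta'})$. The decisive observation is that $(\sE^{\kappa',\theta'},\sF^{\kappa',\theta'})$ is local if and only if $\theta'=0$: indeed $(\bD,\tilde H^1(\Omega))$ is strongly local, so in the Beurling--Deny decomposition of $(\sE^{\kappa',\theta'},\sF^{\kappa',\theta'})$ (read off after transferring to a regular Dirichlet form via a quasi-homeomorphism, as in the discussion around \eqref{eq:38}) the diffusion part is $\bD$, the killing measure is $\kappa'$, and the jumping measure is $\theta'$; hence locality is equivalent to the vanishing of the jumping measure, i.e.\ to $\theta'=0$. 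Setting $\mu:=\kappa'$ and $\Gamma_\mu:=\Gamma_{\kappa',\theta'}$, the remark following the definition of a quasi-admissible pair identifies $(\sE^{\kappa',0},\sF^{\kappa',0})$ with $(\sE^\mu,\sF^\mu)$, and the constraints \eqref{eq:75} collapse to exactly those in the statement: the inequality for $\theta'$ becomes vacuous, and the one for $\kappa'$ reads
\[
	\mu\geq\kappa|_{\Gamma_\mu}+2\theta(dx,\Gamma_\mu)|_{\Gamma_\mu}+2\theta(dx,\Gamma_{\kappa,\theta}\setminus\Gamma_\mu)|_{\Gamma_\mu}=(\kappa+2\bar\theta)|_{\Gamma_\mu},
\]
using $\theta(dx,\Gamma_\mu)+\theta(dx,\Gamma_{\kappa,\theta}\setminus\Gamma_\mu)=\theta(dx,\Gamma_{\kappa,\theta})=\bar\theta(dx)$ and $\Gamma_\mu\subset\Gamma_{\kappa,\theta}$. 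Thus $(\sE,\sF)=(\sE^\mu,\sF^\mu)$ for a quasi-admissible $\mu$ with the required properties.

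For the converse, given a quasi-admissible $\mu$ with $\Gamma_\mu\subset\Gamma_{\kappa,\theta}$ and $\mu\geq(\kappa+2\bar\theta)|_{\Gamma_\mu}$, the pair $(\mu,0)$ is quasi-admissible and satisfies \eqref{eq:75}, so the sufficiency direction of Theorem~\ref{THM713} (together with the quasi-regularity established in the proof of Theorem~\ref{THM73}) shows that $(\sE^\mu,\sF^\mu)$ is a quasi-regular Dirichlet form sandwiched between $(\bD,H^1_0(\Omega))$ and $(\sE^{\kappa,\theta},\sF^{\kappa,\theta})$; alternatively one verifies the three conditions of Lemma~\ref{LM21}~(2) by hand, the only nontrivial one being that for nonnegative $f,g\in\sF^\mu$ the estimate $\int(\tilde f(x)-\tilde f(y))(\tilde g(x)-\tilde g(y))\theta(dxdy)\leq 2\int_{\Gamma_\mu}\tilde f\tilde g\,d\bar\theta$ (valid because the cross term $-2\int\tilde f(x)\tilde g(y)\theta(dxdy)$ is nonpositive and $\tilde f,\tilde g$ vanish q.e.\ off $\Gamma_\mu$) gives $\sE^{\kappa,\theta}(f,g)\leq\bD(f,g)+\int_{\Gamma_\mu}\tilde f\tilde g\,d(\kappa+2\bar\theta)\leq\sE^\mu(f,g)$. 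Finally $(\sE^\mu,\sF^\mu)$ is local, since for $u,v\in\sF^\mu$ with disjoint supports one has $\bD(u,v)=0$ by strong locality of $(\bD,\tilde H^1(\Omega))$ and $\int_{\Gamma_\mu}\tilde u\tilde v\,d\mu=0$.

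The main obstacle is the structural input of the first two paragraphs, namely that locality of the sandwiched form forces it to be both symmetric and of the form $(\sE^{\kappa',\theta'},\sF^{\kappa',\theta'})$ with $\theta'=0$. The cleanest route passes through the uniqueness of the Beurling--Deny decomposition combined with the strong locality of $\bD$ (which prevents $\theta'$ from being absorbed into the diffusion or killing parts), applied after a quasi-homeomorphism to a regular Dirichlet form; alternatively one argues directly with \eqref{eq:46}, using that $\theta'$ is a bivariate smooth measure, hence charges no polar sets, to produce disjoint quasi-open subsets of $G_{\kappa,\theta}$ and functions in $\sF$ supported in them whose $\sE$-pairing equals a nonzero multiple of the forbidden $\theta'$-mass, contradicting locality.
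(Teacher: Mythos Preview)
Your approach is essentially the paper's: specialize Theorem~\ref{THM713} by observing that locality of $(\sE^{\kappa',\theta'},\sF^{\kappa',\theta'})$ is equivalent to $\theta'=0$, then set $\mu=\kappa'$ and collapse the constraints \eqref{eq:75} via $\theta(dx,\Gamma_\mu)+\theta(dx,\Gamma_{\kappa,\theta}\setminus\Gamma_\mu)=\bar\theta(dx)$. The paper's proof is the single sentence ``It suffices to note that the Dirichlet form $(\sE,\sF)$ in Theorem~\ref{THM713} is local if and only if $\theta'=0$,'' so your write-up is more detailed but not different in substance.

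One point where you are more careful than the paper: Theorem~\ref{THM713} is stated for \emph{symmetric} sandwiched forms, while Corollary~\ref{COR714} assumes only locality. You correctly flag that symmetry must be obtained first, and your argument via \eqref{eq:46} (locality forces the off-diagonal part of $\nu_{M'}$ to vanish, leaving a diagonal---hence symmetric---bivariate Revuz measure) is sound and makes the paper's implicit step explicit. The reduction of \eqref{eq:75} and the converse verification are both correct.
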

\begin{proof}
It suffices to note that the Dirichlet form $(\sE,\sF)$ in Theorem~\ref{THM713} is local if and only if $\theta'=0$.
\end{proof}

\subsection{Resolution of an open problem}\label{SEC73}

We close this section with a special consideration of the admissible pair $(\kappa,\theta)$ in Example~\ref{EXA712}.  Recall that $\Gamma_{\kappa,\theta}$ is defined as \eqref{eq:76},  $\hat{\theta}(\cdot):=\theta(\cdot, \Gamma)$,  and the regular Dirichlet form $(\sA^{\kappa,\theta},\overline{\cD(\sA^{\kappa,\theta})})$ corresponds to the quasi-admissible pair of measures
\[
	\kappa'=(\kappa+2{\theta}(\cdot    ,\Gamma\setminus \Gamma_{\kappa,\theta}))|_{\Gamma_{\kappa,\theta}},\quad \theta'=\theta|_{\Gamma_{\kappa,\theta}\times \Gamma_{\kappa,\theta}\setminus d}
\]  
with the defining set $\Gamma_{\kappa',\theta'}=\Gamma_{\kappa,\theta}$.  

Let $\mu_0:=\kappa+2\hat{\theta}$,   which is an admissible measure with $\Gamma_{\mu_0}=\Gamma_{\kappa,\theta}$ as discussed in Example~\ref{EXA75}.  Let $(a_{\mu_0},\cD(a_{\mu_0}))$ be defined as \eqref{eq:77} (with $\mu=\mu_0$).  Since $\Gamma_{\mu_0}=\Gamma_{\kappa',\theta'}$ and $\mu_0|_{\Gamma_{\mu_0}}=\kappa'+2\bar{\theta}'$,  where $\bar\theta'(\cdot):=\theta'(\cdot, \Gamma_{\kappa',\theta'})=\theta(\cdot, \Gamma_{\kappa,\theta})$,  it follows from Corollary~\ref{COR714} that $(a_{\mu_0},\cD(a_{\mu_0}))$ is sandwiched between $(\bD,H^1_0(\Omega))$ and $(\sA^{\kappa,\theta},\overline{\cD(\sA^{\kappa,\theta})})$.  The following result establishes that it is the largest among the local Dirichlet forms sandwiched between $(\bD, H^1_0(\Omega))$ and $(\sA^{\kappa,\theta}, \overline{\cD(\sA^{\kappa,\theta})})$. 

\begin{corollary}
If $(\sE,\sF)$ is a quasi-regular and local Dirichlet form on $L^2(\overline{\Omega})$ that is sandwiched between $(\bD,H^1_0(\Omega))$ and $(\sA^{\kappa,\theta}, \overline{\cD(\sA^{\kappa,\theta})})$,  then $(\sE,\sF)$ is dominated by $(a_{\mu_0},\cD(a_{\mu_0}))$.   
\end{corollary}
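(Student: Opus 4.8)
The plan is to combine the representation furnished by Corollary~\ref{COR714} with a direct check of the Ouhabaz criterion, i.e. the second assertion of Lemma~\ref{LM21}. As already recorded just before the statement, $(\sA^{\kappa,\theta},\overline{\cD(\sA^{\kappa,\theta})})=(\sE^{\kappa',\theta'},\sF^{\kappa',\theta'})$ for the quasi-admissible pair $\kappa'=(\kappa+2\theta(\cdot,\Gamma\setminus\Gamma_{\kappa,\theta}))|_{\Gamma_{\kappa,\theta}}$, $\theta'=\theta|_{\Gamma_{\kappa,\theta}\times\Gamma_{\kappa,\theta}\setminus d}$ with $\Gamma_{\kappa',\theta'}=\Gamma_{\kappa,\theta}$, that $\mu_0=\kappa+2\hat\theta$ is admissible with $\Gamma_{\mu_0}=\Gamma_{\kappa,\theta}$, that by Example~\ref{EXA75} $(a_{\mu_0},\cD(a_{\mu_0}))$ of \eqref{eq:77} equals $(\sE^{\mu_0},\sF^{\mu_0})$ of \eqref{eq:73} with defining set $\Gamma_{\mu_0}$ and $B_{\mu_0}=\Gamma\setminus\Gamma_{\mu_0}$ closed, and that $\mu_0|_{\Gamma_{\mu_0}}=\kappa'+2\bar\theta'$ with $\bar\theta'(\cdot)=\theta'(\cdot,\Gamma_{\kappa',\theta'})$.

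First I would apply Corollary~\ref{COR714} to the quasi-admissible pair $(\kappa',\theta')$: since $(\sE,\sF)$ is a quasi-regular and local Dirichlet form sandwiched between $(\bD,H^1_0(\Omega))$ and $(\sE^{\kappa',\theta'},\sF^{\kappa',\theta'})$, there is a quasi-admissible measure $\mu$, with defining set $\Gamma_\mu\subset\Gamma_{\kappa',\theta'}=\Gamma_{\mu_0}$, $B_\mu:=\Gamma\setminus\Gamma_\mu$ quasi-closed, and $\mu\ge(\kappa'+2\bar\theta')|_{\Gamma_\mu}=\mu_0|_{\Gamma_\mu}$, such that $(\sE,\sF)=(\sE^\mu,\sF^\mu)$. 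Two bookkeeping consequences are worth isolating before the verification: $\Gamma_\mu\subset\Gamma_{\mu_0}$ forces $B_{\mu_0}\subset B_\mu$ and $\Gamma_{\mu_0}\setminus\Gamma_\mu=B_\mu\setminus B_{\mu_0}\subset B_\mu$; and both $\mu$ and $\mu_0$, being (quasi-)admissible hence smooth, charge no $\bD$-polar set. Thus every $\tilde u\in\sF^\mu$ already vanishes q.e. on $B_\mu$, hence q.e. on $\Gamma_{\mu_0}\setminus\Gamma_\mu$, so $\int_{\Gamma_{\mu_0}\setminus\Gamma_\mu}\tilde u^2\,d\mu_0=0$.

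It then remains to verify $(\sE^\mu,\sF^\mu)\preceq(\sE^{\mu_0},\sF^{\mu_0})$ through condition (2) of Lemma~\ref{LM21}. For the inclusion $\sF^\mu\subset\sF^{\mu_0}$: given $u\in\sF^\mu$, we have $\tilde u=0$ q.e. on $B_\mu\supset B_{\mu_0}$, and $\int_{\Gamma_{\mu_0}}\tilde u^2\,d\mu_0=\int_{\Gamma_\mu}\tilde u^2\,d\mu_0\le\int_{\Gamma_\mu}\tilde u^2\,d\mu<\infty$ using $\mu\ge\mu_0|_{\Gamma_\mu}$, so $u\in\sF^{\mu_0}$. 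For the algebraic-ideal property: if $f\in\mathrm{b}\sF^\mu$ and $g\in\mathrm{b}\sF^{\mu_0}$, then $fg\in\mathrm{b}\tilde H^1(\Omega)$ because $\mathrm{b}\tilde H^1(\Omega)$ is an algebra (\cite[I, Corollary~4.15]{MR92}), $\widetilde{fg}=\tilde f\tilde g=0$ q.e. on $B_\mu$, and $\int_{\Gamma_\mu}(\tilde f\tilde g)^2\,d\mu\le\|g\|_\infty^2\int_{\Gamma_\mu}\tilde f^2\,d\mu<\infty$, whence $fg\in\mathrm{b}\sF^\mu$. For the form inequality: for nonnegative $f,g\in\sF^\mu$,
\[
\sE^\mu(f,g)-\sE^{\mu_0}(f,g)=\int_{\Gamma_\mu}\tilde f\tilde g\,d\mu-\int_{\Gamma_{\mu_0}}\tilde f\tilde g\,d\mu_0=\int_{\Gamma_\mu}\tilde f\tilde g\,(d\mu-d\mu_0)\ge 0,
\]
again invoking $\tilde f\tilde g=0$ q.e. on $\Gamma_{\mu_0}\setminus\Gamma_\mu$, together with $\mu\ge\mu_0|_{\Gamma_\mu}$ and $\tilde f\tilde g\ge 0$. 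By Lemma~\ref{LM21} this gives $(\sE,\sF)=(\sE^\mu,\sF^\mu)\preceq(\sE^{\mu_0},\sF^{\mu_0})=(a_{\mu_0},\cD(a_{\mu_0}))$, as claimed.

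There is no substantive obstacle here; the argument is a matter of matching the defining sets correctly. The only point I would be careful to state explicitly is the interplay $\Gamma_\mu\subset\Gamma_{\mu_0}\iff B_{\mu_0}\subset B_\mu$ together with the fact that both $\mu$ and $\mu_0$ charge no polar set, since this is exactly what makes the domains and perturbation terms comparable; everything else is a one-line estimate.
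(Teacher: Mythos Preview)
Your proof is correct and follows the same route as the paper. Both begin by applying Corollary~\ref{COR714} to obtain $(\sE,\sF)=(\sE^\mu,\sF^\mu)$ with $\Gamma_\mu\subset\Gamma_{\mu_0}$ and $\mu\ge\mu_0|_{\Gamma_\mu}$; the paper then concludes in one line by citing Corollary~\ref{COR64}, while you instead verify the Ouhabaz criterion of Lemma~\ref{LM21} directly---which is exactly what the sufficiency direction of Corollary~\ref{COR64} amounts to in this concrete situation.
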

\begin{proof}
According to Corollary~\ref{COR714},   it holds that $(\sE,\sF)=(\sE^\mu,\sF^\mu)$ for some quasi-admissible $\mu$ such that $\Gamma_{\mu}\subset \Gamma_{\mu_0}$ and $\mu\geq \mu_0|_{\Gamma_\mu}$.  Consequently, the conclusion follows from Corollary~\ref{COR64}.
\end{proof}

Particularly,  if $\mu$ is an admissible measure such that $(a_{\mu},\cD(a_{\mu}))$ is sandwiched between $(\bD,H^1_0(\Omega))$ and $(\sA^{\kappa,\theta}, \overline{\cD(\sA^{\kappa,\theta})})$,  then $(a_{\mu},\cD(a_{\mu}))$ is dominated by $(a_{\mu_0},\cD(a_{\mu_0}))$.  This conclusion {solves} the open problem posed at the end of \cite{OA21}.

\appendix

\section{Quasi-regularity of Dirichlet forms}\label{SEC3}

In this appendix,  we recall the fundamental concepts related to the quasi-regularity of Dirichlet forms.  Let $E$ be a Hausdorff topological space with the Borel $\sigma$-algebra $\mathscr{B}(E)$ assumed to be generated by the continuous functions on $E$,  and let $m$ be a $\sigma$-finite measure on $E$ with support $\mathrm{supp}[m]=E$.  Denote by $\sB^*(E)$ be the universal completion of $\sB(E)$.  

\subsection{Quasi-regularity}

Let $(\sE,\sF)$ be a Dirichlet form on $L^2(E,m)$.  An increasing sequence of closed subsets $\{F_n:n\geq 1\}$ of $E$ is called an \emph{$\sE$-nest} if $\bigcup_{n\geq 1}\sF_{F_n}$ is $\sE_1$-dense in $\sF$,  where $\sF_{F_n}:=\{f\in \sF: f=0,m\text{-a.e. on }E\setminus F_n\}$.  A set $N\subset E$ is called \emph{$\sE$-polar} if there exists an $\sE$-nest $\{F_n:n\geq 1\}$ such that $N\subset E\setminus \left(\bigcup_{n\geq 1}F_n \right)$.  A statement is said to hold in the sense of $\sE$-quasi-everywhere (abbreviated as \emph{$\sE$-q.e.}) if it holds outside an $\sE$-polar set.   A function $f$ on $E$ is called \emph{$\sE$-quasi-continuous} if there exists an $\sE$-nest $\{F_n:n\geq 1\}$ such that $f|_{F_n}$ is finite and continuous on $F_n$ for each $n\geq 1$,  denoted by $f\in C(\{F_n\})$.  If $f=g$, $m$-a.e.,  and $g$ is $\sE$-quasi-continuous,  then $g$ is referred to as an $\sE$-quasi-continuous $m$-version of $f$.  A set $G\subset E$ is termed \emph{$\sE$-quasi-open} if there exists an $\sE$-nest $\{F_n:n\geq 1\}$ such that $G\cap F_n$ is an open subset of $F_n$ with respect to the relative topology for each $n\geq 1$.  The complement of an $\sE$-quasi-open set is called \emph{$\sE$-quasi-closed}. 
 It is important to emphasize that these concepts are related solely to the symmetric part of $\sE$.  

\begin{definition}
A Dirichlet form $(\sE,\sF)$ on $L^2(E,m)$ is called \emph{quasi-regular} if the following conditions hold:
\begin{itemize}
\item[(1)] There exists an $\sE$-nest $\{K_n:n\geq 1\}$ consisting of compact sets.
\item[(2)] There exists an $\sE_1$-dense subset of $\sF$ whose elements have $\sE$-quasi-continuous $m$-versions.
\item[(3)] There exists $\{f_k:k\geq 1\}\subset \sF$ having $\sE$-quasi-continuous $m$-versions $\{\tilde{f}_k:k\geq 1\}$ and an $\sE$-polar set $N$ such that $\{\tilde{f}_k:k\geq 1\}$ separates the points of $E\setminus N$.
\end{itemize}
It is called \emph{regular} if $E$ is a locally compact separable metric space,  $m$ is a fully supported Radon measure on $E$,  and $\sF\cap C_c(E)$ is $\sE_1$-dense in $\sF$ as well as uniformly dense in $C_c(E)$.  It is noteworthy that a regular Dirichlet form is always quasi-regular.
\end{definition}

It is well known that there exists an \emph{$m$-tight}  right process
\[
	X=\left(\Omega, \mathcal{F}, \mathcal{F}_t, X_t, \theta_t,\bP_x\right)
\]
 on $E$ \emph{properly} associated with a quasi-regular Dirichlet form $(\sE,\sF)$ in the sense that $P_tf$ is an $\sE$-quasi-continuous $m$-version of $T_tf$ for any $t\geq 0$ and $f\in L^2(E,m)\cap \mathrm{p}\sB(E)$,  where $P_tf(x):=\bP_xf(X_t)$ is the transition semigroup of $X$ and $(T_t)_{t\geq 0}$ is the $L^2$-semigroup corresponding to $(\sE,\sF)$  (see \cite[IV,  Theorem~3.5]{MR92}).  Denote by $(U_\alpha)_{\alpha>0}$ the resolvent of $X$,  defined as
\[
 U_\alpha f(x):=\bP_x \int_0^\infty e^{-\alpha t}f(X_t)dt
\]
for all $x\in E$ and $f\in \mathrm{b}\sB(E)$. The notation related to general Markov processes can be found in,  e.g., \cite{BG68} and \cite{S88}.  The symbol $\Delta$ represents the cemetery,  and $\zeta:=\inf\{t>0:X_t=\Delta\}$ denotes the lifetime of $X$.  Note that the $m$-tightness of $X$ means that there exists a sequence of increasing compact sets $\{K_n:n\geq 1\}$ such that
\begin{equation}\label{eq:32}
	\bP_m\left(\lim_{n\rightarrow \infty}T_{E\setminus K_n}<\zeta\right)=0,
\end{equation}
where $\bP_m:=\int_E m(dx)\bP_x$ and $T_{A}:=\inf\{t>0:X_t\in A\}$ denotes the first hitting time of any nearly Borel measurable set $A$ with respect to $X$.  

Without loss of generality,  we may further assume that $E$ is a Lusin space (see \cite[IV,  Remark~3.2~(iii)]{MR92}),  and that $X$ is Borel measurable in the sense that $P_tf\in \sB(E)$ for all $t\geq 0$ and $f\in \mathrm{b}\sB(E)$ (see, e.g., \cite[Corollary~3.23]{F01}).  In this context,  $X$ is referred to as a \emph{Borel right process}.  As established in \cite{W72},   $X_{t-}$ exists in $E$ for $t<\zeta$,  $\bP_m$-a.s. (This fact will be utilized only in \S~\ref{SEC43}.) 

A set $N\subset E$ is called \emph{$m$-polar} (for $X$) if there exists a nearly Borel measurable set $\tilde{N}\supset N$ such that $\bP_m(T_{\tilde{N}}<\infty)=0$.  A statement holds in the sense of \emph{q.e.}  if it holds outside an $m$-polar set.  A nearly Borel measurable set $A\subset E$ is said to be ($X$-)\emph{invariant} if for every $x\in A$,  $\bP_x\left(T_{E\setminus A}<\infty\right)=0$.  The restriction of a Borel right process $X$ to an invariant set is a (not necessarily Borel) right process (see \cite[(12.30)]{S88}),  whereas its restriction to a Borel invariant set remains a Borel right process (see,  e.g., \cite[Lemma~A.1.27]{CF12}).  
A subset $N\subset E$ is called an \emph{$m$-inessential set} (for $X$) if $m(N)=0$ and $E\setminus N$ is $X$-invariant.  Obviously,  an $m$-inessential set is $m$-polar.   Note that any $m$-polar set is contained in an $m$-inessential Borel set (see,  e.g., \cite[Theorem~A.2.15]{CF12}).  A numerical function $f$ defined q.e.  on $E$ is called \emph{finely continuous q.e.} if there exists an $m$-inessential  set $N$ such that $f|_{E\setminus N}$ is nearly Borel measurable and finely continuous with respect to the restricted right process $X|_{E\setminus N}$.  A set $G\subset E$ is called \emph{q.e.  finely open} if there exists an $m$-inessential set $N$ such that $G\setminus N$ is a nearly Borel measurable and finely open set for $X|_{E\setminus N}$.  

The following lemma summarizes the relationships between several $\sE$-quasi-notions and concepts related to $X$.

\begin{lemma}\label{LM32}
Let $X$ be a Borel right process properly associated with the quasi-regular Dirichlet form $(\sE,\sF)$ on $L^2(E,m)$.  Then the following statements hold:
\begin{itemize} 
\item[(1)] A set $N\subset E$ is $\sE$-polar,  if and only if it is $m$-polar. 
\item[(2)] An increasing sequence of closed subsets $\{F_n:n\geq 1\}$ of $E$ is an $\sE$-nest if and only if 
\begin{equation}\label{eq:32-2}
		\bP_x\left(\lim_{n\rightarrow \infty}T_{E\setminus F_n}<\zeta\right)=0
\end{equation}
for q.e.  $x\in E$ (equivalently,  for $m$-a.e.  $x\in E$).
\item[(3)] If $f$ is $\sE$-quasi-continuous,  then $f$ is finely continuous q.e.  Conversely,  if $f\in \sF$ is finely continuous q.e., then $f$ is $\sE$-quasi-continuous. 
\item[(4)] A set $G\subset E$ is $\sE$-quasi-open if and only if it is q.e. finely open.
\end{itemize}
\end{lemma}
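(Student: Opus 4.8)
The plan is to treat all four assertions as known facts about quasi-regular Dirichlet forms and their properly associated right processes, and to organize the argument around two reductions. First, every notion on the analytic side---$\sE$-nests, $\sE$-polar sets, $\sE$-quasi-continuity, $\sE$-quasi-open sets---depends only on the symmetric part $\check\sE$ of $(\sE,\sF)$, and one knows (see \cite[IV, \S5]{MR92}, and \cite{F01}) that the $m$-polar sets, the $m$-inessential sets and the q.e.\ fine topology attached to $X$ are likewise governed by the symmetric $1$-capacity, so they coincide with the corresponding objects for any right process properly associated with $\check\sE$. Second, by \cite[Theorem~1.4.3]{CF12} there is a quasi-homeomorphism transforming the symmetric form $\check\sE$ into a regular Dirichlet form on a locally compact separable metric space, under which nests, polar sets, quasi-continuous functions, quasi-open sets, and the associated right process together with its fine topology and exceptional sets are transported faithfully. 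Combining the two, it suffices to prove (1)--(4) when $(\sE,\sF)$ is a \emph{regular symmetric} Dirichlet form, where one may use the identification of $\sE$-nests with sequences of closed sets for which $\mathrm{Cap}(E\setminus F_n)\to 0$ and of $\sE$-polar sets with sets of zero $1$-capacity.

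For (1) and (2) I would invoke the probabilistic description of nests in the regular symmetric case: an increasing sequence of closed sets $\{F_n\}$ is an $\sE$-nest if and only if \eqref{eq:32-2} holds for q.e.\ (equivalently, $m$-a.e.) $x$, which is the content of \cite[Theorem~3.1.3 and Theorem~4.1.2]{FOT11}. This gives (2) at once, and (1) follows from it: if $N\subset E\setminus\bigcup_n F_n$ for some $\sE$-nest $\{F_n\}$, then the Borel set $\tilde N:=\bigcap_n(E\setminus F_n)\supset N$ satisfies $T_{\tilde N}\ge \lim_n T_{E\setminus F_n}\ge\zeta$ $\bP_x$-a.s.\ for q.e.\ $x$ by (2), hence $\bP_m(T_{\tilde N}<\infty)=0$ and $N$ is $m$-polar; conversely an $m$-polar set has zero $1$-capacity---this is the step where the $m$-tightness \eqref{eq:32} enters, via the capacitability argument of \cite[IV]{MR92} (equivalently \cite[Theorem~4.2.1]{FOT11} after the symmetric reduction)---so it is $\sE$-polar.

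For (3) and (4), the forward implication in (3) comes from choosing an $\sE$-nest $\{F_n\}$ of compact sets with $f\in C(\{F_n\})$, discarding an $m$-inessential Borel set so that $X$ remains in $\bigcup_n F_n$ in the sense of \eqref{eq:32-2}, and then deducing right-continuity of $t\mapsto \tilde f(X_t)$ from the right-continuity and quasi-left-continuity of $X$ and the continuity of $f$ on each $F_n$; this is \cite[Theorem~3.1.4]{FOT11}. For the converse, given $f\in\sF$ finely continuous q.e., I would approximate $f$ in the $\sE_1$-norm by functions of the form $\beta U_\beta g$, whose $U_\alpha$-images are automatically $\sE$-quasi-continuous, and use fine continuity to pin down the quasi-continuous $m$-version, following \cite[Theorem~4.2.2]{FOT11}. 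Statement (4) is then obtained by applying (3) to suitable finely continuous functions detecting $G$ along a nest, or quoted directly: the equivalence ``$\sE$-quasi-open $\Leftrightarrow$ q.e.\ finely open'' for regular symmetric forms is \cite[Theorem~4.6.1]{FOT11} (see also \cite[Theorem~3.3.8]{CF12}), and it transfers to the present quasi-regular non-symmetric setting through the two reductions above.

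The main obstacle I anticipate is the implication ``$m$-polar $\Rightarrow$ $\sE$-polar'' in (1): unlike its converse it is not formal, it genuinely requires the $m$-tightness of $X$ (condition (1) of quasi-regularity), and its proof rests on the capacitability of analytic sets together with the tightness estimate \eqref{eq:32}. A secondary, more bookkeeping-type difficulty, which the symmetric-part reduction is designed to finesse, is to justify that the fine topology, invariant sets and $m$-polar sets of the \emph{non-symmetric} process $X$ are exactly those determined by the symmetric $1$-capacity; for this one relies on \cite[IV, \S5]{MR92} and \cite{F01}, after which every statement is reduced to the well-documented regular symmetric case.
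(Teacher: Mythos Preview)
Your approach is correct but takes a genuinely different route from the paper's proof. The paper does \emph{not} reduce to the regular symmetric case; instead it works directly in the non-symmetric quasi-regular framework of \cite{MR92}: statements (1) and (2) are simply quoted from \cite[IV, Theorem~5.29(i)]{MR92} and \cite[IV, Theorem~5.4 and Proposition~5.30]{MR92}, which already cover non-symmetric quasi-regular forms, and (3) is obtained by repeating verbatim the proof of \cite[Theorem~3.1.7]{CF12}. For (4), the paper gives an explicit construction for the nontrivial direction ``q.e.\ finely open $\Rightarrow$ $\sE$-quasi-open'': after discarding an $m$-inessential set, it sets
\[
g(x):=\bP_x\int_0^{T_{E\setminus G_1}}e^{-t}f(X_t)\,dt=U_1f(x)-\bH^1_{E\setminus G_1}U_1f(x)
\]
for a strictly positive $f\in\sB(E)\cap L^2(E,m)$, observes that $\bH^1_{E\setminus G_1}U_1f$ is $1$-excessive and dominated by $U_1f\in\sF$, hence belongs to $\sF$ by \cite[Theorem~2.6]{MOR95}, and concludes via (3) that $g$ is $\sE$-quasi-continuous; since $\{g>0\}=G_1\cup(B_1\setminus B_1^r)$ and $B_1\setminus B_1^r$ is semipolar, $G$ is $\sE$-quasi-open.

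Your two-step reduction (symmetric part, then quasi-homeomorphism to a regular form) buys access to the well-documented results in \cite{FOT11}, at the cost of the bookkeeping you rightly flag: one must check that the fine topology and $m$-polar sets of the non-symmetric process $X$ coincide, up to polar sets, with those of any process associated with $\check\sE$. The paper's approach avoids this entirely by citing \cite{MR92} for (1)--(2), and its direct construction for (4) is more self-contained than a transferred citation. Both routes are valid; the paper's is shorter and stays closer to the non-symmetric theory it actually uses.
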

\begin{proof}
 For the first statement,  see \cite[IV, Theorem~5.29(i)]{MR92}.  The second statement is a consequence of \cite[IV, Theorem~5.4 and Proposition~5.30]{MR92}.  The third statement can be established by repeating the proof of \cite[Theorem~3.1.7]{CF12}. (This proof relies solely on the essential properties of Borel right processes and the conclusions from the previous two assertions. Additionally,  it is necessary to adjust $N_0$ in this proof to an $m$-polar set $N'_0$ such that \eqref{eq:32-2} holds for $x\notin N'_0$.)
 
 The necessity of the fourth statement can be concluded by repeating the proof of \cite[Theorem~3.3.3]{CF12}.  In particular,  if $G$ is $\sE$-quasi-open,  there exists an $m$-inessential Borel set $N$ such that $G\setminus N\in \sB(E\setminus N)$ is finely open with respect to $X|_{E\setminus N}$. Conversely,  let $G$ be q.e. finely open, and let $N$ be an $m$-inessential set for $X$ such that $G_1:=G\setminus N$ is nearly Borel measurable and finely open with respect to $X|_{E\setminus N}$.  Take $f\in \sB(E)\cap L^2(E,m)$ that is strictly positive on $E$,  and define
 \[
 	g(x):=\bP_x \int_0^{T_{E\setminus G_1}}e^{-t}f(X_t)dt=U_1f(x)-\bP_x \left(e^{-T_{E\setminus G_1}}U_1f(X_{T_{E\setminus G_1}}) \right).
 \]
Note that $\bH^1_{E\setminus G_1}U_1f(x):=\bP_x \left(e^{-T_{E\setminus G_1}}U_1f(X_{T_{E\setminus G_1}}) \right)$ is $1$-excessive with respect to $X$ (see \cite[Lemma~A.2.4~(ii)]{CF12}),  and $\bH^1_{E\setminus G_1}U_1f\leq U_1f\in \sF$.  It follows from,  e.g.,  \cite[Theorem~2.6]{MOR95} that $\bH^1_{E\setminus G_1}U_1f\in \sF$.  Thus,  the third statement indicates that both $\bH^1_{E\setminus G_1}U_1f$ and $g$ are $\sE$-quasi-continuous.  It is evident that $\{x\in E: g>0\}=G_1\cup (B_1\setminus B_1^r)$,  where $B_1:=E\setminus G_1$ and $B_1^r$ is the set of all regular points for $B_1$.  Note that $B_1\setminus B_1^r$ is $m$-polar,  and hence $\sE$-polar.  Since $\{g>0\}$ is $\sE$-quasi-open,  we can conclude that both $G_1$ and $G$ are $\sE$-quasi-open.  This completes the proof. 
\end{proof}
\begin{remark}
The third statement indicates the following fact: if $f\in \sF$ is finely continuous q.e.,  then there exists an $m$-inessential Borel set $N$ such that $f|_{E\setminus N}$ is not only nearly Borel measurable with respect to $X|_{E\setminus N}$ but also Borel measurable on $E\setminus N$. 
\end{remark}

\subsection{Quasi-homeomorphism}

Let $(\widehat{E},\sB(\widehat{E}))$ be a second topological space with the Borel measurable $\sigma$-algebra $\sB(\widehat{E})$,  and let $j:(E,\sB(E))\rightarrow (\widehat{E}, \sB(\widehat{E}))$ be a measurable map.  Define $\widehat{m}:=m\circ j^{-1}$,  the image measure of $m$ under $j$.  Then 
\[
	j^*:L^2(\widehat{E},\widehat{m})\rightarrow L^2(E,m),\quad \widehat f\mapsto  j^*\widehat f:=\widehat{f}\circ j
\]
is an isometry.  If $j^*$ is onto, i.e.,  $j^*L^2(\widehat{E},\widehat{m})=L^2(E,m)$,  then
\[
\begin{aligned}
	&\widehat{\sF}:=\left\{\widehat{f}\in L^2(\widehat{E},\widehat{m}): j^*\widehat{f}\in \sF\right\}, \\
	&\widehat{\sE}(\widehat{f},\widehat{g}):=\sE(j^*\widehat{f},j^*\widehat{g}),\quad \widehat{f},\widehat{g}\in \widehat{\sF}
\end{aligned}\]
is a Dirichlet form on $L^2(\widehat{E},\widehat{m})$,  referred to as the \emph{image Dirichlet form} of $(\sE,\sF)$ under $j$.  We denote $(\widehat{\sE},\widehat{\sF})$ as $j(\sE,\sF)$.  

\begin{definition}\label{DEF34}
Let $(\widehat{\sE},\widehat{\sF})$ be another Dirichlet form on $L^2(\widehat{E},\widehat{m})$ with $\text{supp}[\widehat{m}]=\widehat E$.  The Dirichlet form $(\sE,\sF)$ is called \emph{quasi-homeomorphic} to $(\widehat{\sE},\widehat{\sF})$ if there exists an $\sE$-nest $\{F_n:n\geq 1\}$, an $\widehat{\sE}$-nest $\{\widehat{F}_n: n\geq 1\}$ and a map $j:\bigcup_{n\geq 1}F_n\rightarrow \bigcup_{n\geq 1}\widehat{F}_n$ such that 
\begin{itemize}
\item[(1)] $j$ is a topological homeomorphism from $F_n$ to $\widehat{F}_n$ for each $n\geq 1$.
\item[(2)] $\widehat{m}=m\circ j^{-1}$.
\item[(3)] $(\widehat{\sE},\widehat{\sF})=j(\sE,\sF)$,  the image Dirichlet form of $(\sE,\sF)$ under $j$. 
\end{itemize}
Such a map $j$ is called a \emph{quasi-homeomorphism} from $(\sE,\sF)$ to $(\widehat{\sE},\widehat{\sF})$.  
\end{definition}

Note that a quasi-homeomorphism keeps the quasi-notions invariant; see,  e.g.,  \cite[Corollary~3.6]{CMR94}. 

{
\section*{Acknowledgement}

The authors would like to thank Professor Zhen-Qing Chen from the University of Washington. He brought to our attention that Silverstein has examined the subordination of semigroups in the symmetric case in \cite{S74}. This result is referred to as the Third Structure Theorem in \cite[Theorem 21.2]{S74}. {The authors also  wish to express sincere appreciation to the anonymous referees for their careful reading and many constructive comments, which have led to substantial improvements in both the exposition and the mathematical precision of the paper.}
}

\bibliographystyle{abbrv}
\bibliography{SubDom}

\end{document}